\documentclass[11pt]{article}

\usepackage{amsmath,amsthm,amssymb}
\usepackage{amsmath,amssymb,enumitem}
\usepackage{algorithm}
\usepackage{algorithmic}[1]
\usepackage{float}
\usepackage{subfig}
\usepackage{titlesec}
\usepackage{scalerel}
\usepackage{epstopdf}
\usepackage{graphicx}
\usepackage{xspace}
\usepackage{comment}
\usepackage{url}

\usepackage[a4paper]{geometry}
\geometry{hmargin=0.8in,vmargin=1.1in}

\def \endproof{\enskip \null \nobreak \hfill \openbox \par}

\newcommand{\RR}{\mathbb{R}}
\newcommand{\CC}{\mathbb{C}}

\newcommand{\dist}{\mbox{dist}}

\newcommand{\eg}{{\it e.g.}}
\newcommand{\ie}{{\it i.e.}}

\DeclareMathOperator*{\argmin}{argmin}

\DeclareMathOperator*{\Argmin}{Argmin}

\newtheorem{lem}{Lemma}
\newtheorem{thm}{Theorem}
\newtheorem{coro}{Corollary}
\newtheorem{defi}{Definition}
\newtheorem{prop}{Proposition}

\newtheorem{ass}{Assumption}
\newtheorem{fact}{Fact}

\newtheorem{algo}{Algorithm}

\begin{document}
\title{\textbf{On the Quadratic Convergence of the Cubic Regularization Method under a Local \\ Error Bound Condition}}
\author{
Man-Chung Yue
\thanks{Imperial College Business School, Imperial College London, United Kingdom. E-mail: {\tt
m.yue@imperial.ac.uk}}
\and
Zirui Zhou%
\thanks{Department of Mathematics, Simon Fraser University, Canada. E-mail: {\tt ziruiz@sfu.ca} . This author is supported by an NSERC Discovery Grant and the SFU Alan Mekler postdoctoral fellowship.}
\and
Anthony Man-Cho So
\thanks{Department of Systems Engineering and Engineering Management, The Chinese University of
Hong Kong, Shatin, N. T., Hong Kong. E-mail: {\tt manchoso@se.cuhk.edu.hk}}
}
\date{}
\maketitle
\thispagestyle{empty}

\begin{abstract}
\medskip

In this paper we consider the cubic regularization (CR) method for minimizing a twice continuously differentiable function. While the CR method is widely recognized as a globally convergent variant of Newton's method with superior iteration complexity, existing results on its local quadratic convergence require a stringent non-degeneracy condition. We prove that under a local error bound (EB) condition, which is much weaker a requirement than the existing non-degeneracy condition, the sequence of iterates generated by the CR method converges at least Q-quadratically to a second-order critical point. This indicates that adding a cubic regularization not only equips Newton's method with remarkable global convergence properties but also enables it to converge quadratically even in the presence of degenerate solutions. As a byproduct, we show that without assuming convexity, the proposed EB condition is equivalent to a quadratic growth condition, which could be of independent interest. To demonstrate the usefulness and relevance of our convergence analysis, we focus on two concrete nonconvex optimization problems that arise in phase retrieval and low-rank matrix recovery, respectively, and prove that with overwhelming probability, the sequence of iterates generated by the CR method for solving these two problems converges at least Q-quadratically to a global minimizer. We also present numerical results of the CR method when applied to solve these two problems to support and complement our theoretical development.

\bigskip

\noindent {\bf Keywords:} cubic regularization, quadratic convergence, error bound, second-order critical points, non-isolated solutions, phase retrieval, low-rank matrix recovery

\bigskip

\noindent {\bf AMS subject classifications:} 90C26, 90C30, 65K05, 49M15
\end{abstract}

%

\section{Introduction}\label{sec:introduction}
Consider the unconstrained minimization problem
\begin{equation}\label{opt:smooth_nc}
\min_{x\in\RR^n} f(x),
\end{equation}
where $f:\RR^n\rightarrow\RR$ is assumed to be twice continuously differentiable.
Newton's method is widely regarded as an efficient local method for solving problem \eqref{opt:smooth_nc}. The cubic regularization (CR) method, which is short for cubic regularized Newton's method, is a globally convergent variant of Newton's method. Roughly speaking, given the current iterate $x^k$, the CR method determines the next one by minimizing a cubic regularized quadratic model of $f$ at $x^k$; \ie,
\begin{equation}
\label{eq:update-CR}
x^{k+1} \in \Argmin_{x\in\RR^n} \left\{ f(x^k) + \nabla f(x^k)^T(x - x^k) + \frac{1}{2}(x - x^k)^T\nabla^2f(x^k)(x - x^k) + \frac{\sigma_k}{6}\|x - x^k\|^3 \right\},
\end{equation}
where the regularization parameter $\sigma_k>0$ is chosen such that $f(x^{k+1}) \leq f(x^k)$. The idea of using cubic regularization first appeared in Griewank \cite{G81}, where he proved that any accumulation point of $\{x^k\}_{k\ge0}$ generated by the CR method is a second-order critical point of $f$; \ie, an $x\in\RR^n$ satisfying $\nabla f(x) = 0$ and $\nabla^2 f(x) \succeq 0$. Later, Nesterov and Polyak~\cite{NP06} presented the remarkable result that the CR method has a better global iteration complexity bound than that for the steepest descent method. Elaborating on these results, Cartis {\it et al.}~\cite{CGT11a,CGT11b} proposed an adaptive CR method for solving problem~\eqref{opt:smooth_nc}, where $\{\sigma_k\}_{k\ge0}$ are determined dynamically and subproblems \eqref{eq:update-CR} are solved inexactly. They showed that the proposed method can still preserve the good global complexity bound established in \cite{NP06}. Based on these pioneering works, the CR method has been attracting increasing attention over the past decade; see, \eg,~\cite{CGT10,T13,Y15} and references therein.

In addition to these global convergence properties, the CR method, as a modified Newton's method, is also expected to attain a fast local convergence rate. It is known that if any accumulation point $\bar{x}$ of the sequence $\{x^k\}_{k\ge0}$ generated by the CR method satisfies 
\begin{equation}
\label{eq:sec-suff-cond}
\nabla f(\bar{x})=0 \quad \mbox{and} \quad \nabla^2 f(\bar{x}) \succ 0,
\end{equation} 
then the whole sequence $\{x^k\}_{k\ge0}$ converges at least Q-quadratically to $\bar{x}$; see~\cite[Theorem 4.1]{G81} or~\cite[Theorem 3]{NP06}.\footnote[2]{A sequence of vectors $\{w^k\}_{k\ge0}$ in $\RR^n$ is said to converge {\it Q-quadratically} to a vector $w^\infty$ if there exists a positive constant $M$ such that $\|w^k - w^\infty\|/\|w^k - w^\infty\|^2 \leq M$ for all sufficiently large $k$; see, \eg, \cite[Appendix A.2]{NW06}.} Nevertheless, the non-degeneracy condition \eqref{eq:sec-suff-cond} implies that $\bar{x}$ is an isolated local minimizer of $f$ and hence does not hold for many nonconvex functions in real-world applications. For example, consider the problem of recovering a positive semidefinite matrix $X^*\in\RR^{n\times n}$ with rank $r\ll n$, given a linear operator $\mathcal{A}:\RR^{n\times n}\rightarrow\RR^m$ and a measurement vector $b = \mathcal{A}(X^*)$. A practically efficient approach for recovering $X^*$ is to solve the following nonconvex minimization problem (see, \eg, \cite{BNS16}):
$$
\min_{U\in\RR^{n\times r}} f(U) := \frac{1}{4m}\|\mathcal{A}(UU^T) - b\|^2.
$$
Noticing that $f(U) = f(UR)$ for any $U\in\RR^{n\times r}$ and any orthogonal matrix $R\in\RR^{r\times r}$, it is not hard to see that there is no isolated local minimizer of $f$ when $r\geq 2$, which implies that there is no $U\in\RR^{n\times r}$ such that $\nabla f(U) = 0$ and $\nabla^2 f(U)\succ 0$ when $r\geq 2$. Similar degeneracy features can also be found in various nonconvex optimization formulations used in phase retrieval \cite{SQW16} and deep learning~\cite{ZL17}. In view of this, it is natural to study the local convergence properties of the CR method for solving problems with non-isolated minimizers.
Moreover, the non-degeneracy condition~\eqref{eq:sec-suff-cond} seems too stringent for the purpose of ensuring quadratic convergence of the CR method. Indeed, one can observe from \eqref{eq:update-CR} that due to the cubic regularization, the CR method is well defined even when the Hessian at hand has non-positive eigenvalues. In addition, the CR method belongs to the class of regularized Newton-type methods, many of which have been shown to attain a superlinear or quadratic convergence rate even in the presence of non-isolated solutions. For instance, Li {\it et al.}~\cite{LFQY04} considered a regularized Newton's method for solving the convex case of problem \eqref{opt:smooth_nc}. 
They proved that if $f$ satisfies a local error bound condition, which is a weaker requirement than \eqref{eq:sec-suff-cond}, then the whole sequence $\{x^k\}_{k\ge0}$ converges superlinearly or quadratically to an optimal solution. Yue {\it et al.}~\cite{YZS16} extended such result to a regularized proximal Newton's method for solving a class of nonsmooth convex minimization problems. Other regularized Newton-type methods that have been shown to attain superlinear or quadratic convergence for problems with non-isolated solutions include, among others, the classic Levenberg-Marquardt (LM) method \cite{YF01,FY05} for nonlinear equations, Newton-type methods for complementarity problems \cite{T00}, and regularized Gauss-Newton methods for nonlinear least-squares~\cite{BM14}.

In this paper we establish the quadratic convergence of the CR method under the assumption of the following local error bound condition.

\begin{defi}[EB Condition] 
We say that $f$ satisfies the local error bound (EB) condition if there exist scalars $\kappa,\rho>0$ such that 
\begin{equation}
\label{eq:eb-intro}
\dist(x,\mathcal{X}) \leq \kappa\|\nabla f(x)\| \quad\mbox{whenever} \ \ \dist(x,\mathcal{X})\leq \rho,
\end{equation}
where $\mathcal{X}$ is the set of second-order critical points of $f$ and $\dist(x,\mathcal{X})$ denotes the distance of $x$ to $\mathcal{X}$.
\end{defi}

\noindent
As we shall see in Section 3, the above local EB condition is a weaker requirement than the non-degeneracy condition \eqref{eq:sec-suff-cond}. We prove that if $f$ satisfies the above local EB condition, then the whole sequence $\{x^k\}_{k\ge0}$ generated by the CR method converges at least  Q-quadratically to a second-order critical point of $f$. This, together with the pioneering works \cite{G81,NP06,CGT11a}, indicates that adding a cubic regularization not only equips Newton's method with superior global convergence properties but also enables it to converge quadratically even in the presence of degenerate solutions. We remark that our proof of quadratic convergence is not a direct extension of those from the aforementioned works on regularized Newton-type methods. In particular, a major difficulty in our proof is that the descent direction $d^k = x^{k+1} - x^k$ of the CR method is obtained by minimizing a nonconvex function, as one can see from \eqref{eq:update-CR}. By contrast, the descent directions of the regularized Newton-type methods in \cite{LFQY04,YZS16,YF01,FY05,BM14} are all obtained by minimizing a strongly convex function. For instance, the LM method for solving the nonlinear equation $F(x)=0$ computes its descent direction by solving the strongly convex optimization problem 
\begin{equation}
\label{eq:update-LM}
d^k = \argmin_{d\in\RR^n} \left\{\|F(x^k) + F^\prime(x^k)d\|^2 + \mu_k\|d\|^2 \right\},
\end{equation}
where $F^\prime$ is the Jacobian of $F$ and $\mu_k>0$ is the regularization parameter; see \cite{L44,M63}. Consequently, we cannot utilize the nice properties of strongly convex functions in our proof. Instead, we shall exploit the fact that any accumulation point of the sequence $\{x^k\}_{k\ge0}$ generated by the CR method is a second-order critical point of $f$ in our analysis. It is also worth noting that our convergence analysis unifies and sharpens those in \cite{NP06} for the so-called globally non-degenerate star-convex functions and gradient-dominated functions (see Section \ref{sec:CR} for the definitions). In particular, we show that when applied to these two classes of functions, the CR method converges quadratically, which improves upon the sub-quadratic convergence rates established in \cite{NP06}.

Besides our convergence analysis of the CR method, the proposed local EB condition could also be of independent interest. A notable feature of the EB condition \eqref{eq:eb-intro} is that its target set $\mathcal{X}$ is the set of second-order critical points of $f$. This contrasts with other EB conditions in the literature, where $\mathcal{X}$ is typically the set of first-order critical points (see, \eg,~\cite{LT93}) or the set of optimal solutions (see, \eg,~\cite{F02,ZS15}). Such feature makes our EB condition especially useful for analyzing local convergence of iterative algorithms that are guaranteed to cluster at second-order critical points. Moreover, we prove that under some mild assumptions, our local EB condition is equivalent to a quadratic growth condition (see Theorem \ref{thm:eb=qg} (ii) for the definition). Prior to this work, the equivalence between these two regularity conditions was established when $f$ is convex~\cite{AG08} or when $f$ is nonconvex but satisfies certain quadratic decrease condition \cite{DMN14}. Our result indicates that if the target set $\mathcal{X}$ is the set of second-order critical points, then the equivalence of the two regularity conditions can be established without the need of the aforementioned quadratic decrease condition.

To demonstrate the usefulness and relevance of our convergence analysis, we apply it to study the local convergence behavior of the CR method when applied to minimize two concrete nonconvex functions that arise in phase retrieval and low-rank matrix recovery, respectively. A common feature of these nonconvex functions is that they do not have isolated local minimizers. Motivated by recent advances in probabilistic analysis of the global geometry of these nonconvex functions \cite{SQW16,BNS16}, we show that with overwhelming probability, (i) the set of second-order critical points equals the set of global minimizers and (ii) the local EB condition \eqref{eq:eb-intro} holds. As a result, our analysis implies that with overwhelming probability, the sequence of iterates generated by the CR method for solving these nonconvex problems converges at least Q-quadratically to a global minimizer. Numerical results of the CR method for solving these two nonconvex problems are also presented, which corroborate our theoretical findings.

The rest of this paper is organized as follows. In Section \ref{sec:CR}, we review existing results on the convergence properties of the CR method. In Section \ref{sec:eb}, we study the local EB condition \eqref{eq:eb-intro} and prove its equivalence to a quadratic growth condition. In Section \ref{sec:LC}, we prove the quadratic convergence of the CR method under the local EB condition. In Section \ref{sec:app}, we study the CR method for solving two concrete nonconvex minimization problems that arise in phase retrieval and low-rank matrix recovery, respectively. In Section \ref{sec:numerical}, we present numerical results of the CR method for solving these two nonconvex problems. Finally, we close with some concluding remarks in Section \ref{sec:conclusion}.

\subsection{Notations}
We adopt the following notations throughout the paper.  Let $\mathbb{R}^n$ be the $n$-dimensional Euclidean space and $\langle \cdot, \cdot \rangle$ be its standard inner product. For any vector $x\in\mathbb{R}^n$, we denote by $\|x\|=\sqrt{\langle x,x\rangle}$ its Euclidean norm. Given any $\bar{x}\in\mathbb{R}^n$ and $\rho>0$, we denote by $\mathbb{B}(\bar{x};\rho)$ the Euclidean ball with center $\bar{x}$ and radius $\rho$; \ie, $\mathbb{B}(\bar{x};\rho) := \{ x\in\mathbb{R}^n: \|x - \bar{x}\| \leq \rho\}.$ For any matrix $X\in\mathbb{R}^{m\times n}$, we denote by $\|X\|$ and $\|X\|_F$ its operator norm and Frobenius norm, respectively. If in addition $X$ is symmetric, we write $\lambda_1(X)\geq\cdots\geq\lambda_n(X)$ as the eigenvalues of $X$ in decreasing order. Moreover, we write $X\succeq 0$ if $X$ is positive semidefinite. We denote by $\mathcal{O}^r$ the set of $r\times r$ orthogonal matrices; \ie, $Q^TQ = QQ^T = I_{r}$ for any $Q\in\mathcal{O}^r$, where $I_r$ is the $r\times r$ identity matrix. For any complex vector $z\in\CC^n$, we denote by $\Re(z)$ and $\Im(z)$ its real and imaginary parts, respectively. Moreover, we let $\overline{z}$ be the conjugate of $z$, $z^H = \overline{z}^T$ be the Hermitian transpose of $z$, and $\|z\| = \sqrt{z^Hz}$ be the norm of $z$. For any closed subset $C\subset\RR^n$, we denote by $\dist(x,C)$ the distance of $x\in\RR^n$ to $C$. In addition, we use $\mathcal{N}(C;\rho)$ with some $\rho>0$ to denote the neighborhood $\mathcal{N}(C;\rho):=\{x\in\RR^n: \dist(x,C)\leq \rho\}$ of $C$.

For any $x\in\mathbb{R}^n$, we define $ \mathcal{L}(f(x)) := \left\{ y \in \mathbb{R}^n: f(y) \leq f(x) \right\}$.
We say that $x\in\mathbb{R}^n$ is a \emph{second-order critical point} of $f$ if it satisfies the second-order necessary condition for $f$; \ie, $\nabla f(x) = 0$ and $\nabla^2 f(x) \succeq 0$. Unless otherwise stated, we use $\mathcal{X}$ to denote the set of second-order critical points of $f$ and $\mathcal{X}^*$ to denote the set of global minimizers of $f$. It is clear that $\mathcal{X}^*\subset\mathcal{X}$. Moreover, since $f$ is twice continuously differentiable, both $\mathcal{X}$ and $\mathcal{X}^*$ are closed subsets of $\mathbb{R}^n$. We assume throughout the paper that $\mathcal{X}^*$ is non-empty.

\section{The Cubic Regularization Method}\label{sec:CR}
In this section, we review the cubic regularization (CR) method for solving problem~\eqref{opt:smooth_nc} and some existing results on its convergence properties. 

Given a vector $x\in\RR^n$, we define the cubic regularized quadratic approximation of $f$ at $x$ as
\begin{equation}
\label{eq:true_cubic_model}
f_{\sigma}(p; x) = f(x) + \nabla f(x)^T(p-x) + \frac{1}{2}(p-x)^T\nabla^2 f(x)(p-x) +\frac{\sigma}{6}\|p-x\|^3,
\end{equation}
where $\sigma>0$ is the regularization parameter. In addition, we define
\begin{equation}
\label{eq:def-p-sigma}
\bar{f}_{\sigma}(x) := \min_{p\in\mathbb{R}^n} f_{\sigma}(p;x) \quad \mbox{and} \quad p_{\sigma}(x) \in \Argmin_{p\in\mathbb{R}^n} f_{\sigma}(p; x).
\end{equation}
In principle, starting with an initial point $x^0\in\mathbb{R}^n$, the CR method generates a sequence of iterates $\{x^k\}_{k\ge0}$ by letting $ x^{k+1} = p_{\sigma_k}(x^k)$ for some $\sigma_k>0$ such that
\begin{equation}
\label{eq:suff-decrease}
f(p_{\sigma_k}(x^k)) \leq \bar{f}_{\sigma_k}(x^k).
\end{equation}
Notice that this requires the computation of $p_\sigma(x)$, which is a global minimizer of $f_{\sigma}(\cdot;x)$. Although $f_{\sigma}(\cdot;x)$ is in general nonconvex, it has been shown in \cite{NP06} that $p_\sigma(x)$ can be computed by solving a one-dimensional convex optimization problem. Moreover, the optimality condition for the global minimizers of $f_\sigma(\cdot;x)$ is very similar to that of a standard trust-region subproblem~\cite[Theorem 3.1]{CGT11a}. Such observation has led to the development of various efficient algorithms for finding $p_{\sigma}(x)$ in \cite{CGT11a}. More recently, it is shown in~\cite{CD16} that the gradient descent method can also be applied to find $p_\sigma(x)$.

For the global convergence of the CR method, we need the following assumption.
\begin{ass}
	\label{ass:hess-lip}
	The Hessian of the function $f$ is Lipschitz continuous on a closed convex set $\mathcal{F}$ with $\mathcal{L}(f(x^0))\subset{\rm int}(\mathcal{F})$; \ie, there exists a constant $L>0$ such that
	\begin{equation}
	\label{eq:hess-lip}
	\| \nabla^2 f(x) - \nabla^2 f(y)\| \leq L\|x - y\|, \quad \forall x,y\in\mathcal{F}.
	\end{equation}
\end{ass}
A direct consequence of Assumption~\ref{ass:hess-lip} is that for any $x\in\mathcal{F}$, it holds that $f(p_\sigma(x)) \leq \bar{f}_{\sigma}(x)$ whenever $\sigma\geq L$ (see~\cite[Lemma 4]{NP06}). This further implies that for all $k\geq 0$, we can find a $\sigma_k \leq 2L$ such that~\eqref{eq:suff-decrease} holds. Indeed, if the Lipschitz constant $L$ is known, we can let $\sigma_k = L$. If not, by using a line search strategy that doubles $\sigma_k$ after each trial~\cite[Section 5.2]{NP06}, we can find a $\sigma_k\leq 2L$ such that~\eqref{eq:suff-decrease} holds. We now state the details of the CR method as follows.
\begin{algo}[The Cubic Regularization Method]
	\label{alg:CR}
	\normalfont
	\mbox{}
	\begin{itemize}
		\item[0.] Input an initial point $x^0\in\mathbb{R}^n$, a scalar $\bar{\sigma}\in(0,L]$, and set $k=0$.
		\item[1.] Find $\sigma_k \in[\bar{\sigma}, 2L]$ such that 
		\begin{equation}
		\label{eq:suff-decrease-copy}
		f(p_{\sigma_k}(x^k)) \leq \bar{f}_{\sigma_k}(x^k).
		\end{equation}
		\item[2.] Set $x^{k+1} = p_{\sigma_k}(x^k)$ and $k = k+1$, and go to Step 1.
	\end{itemize}
	{\bf End.}
\end{algo}


The following result, which can be found in \cite[Theorem 4.1]{G81} and \cite[Theorem 2]{NP06}, shows that any accumulation point of the sequence $\{x^k\}_{k\ge0}$ generated by the CR method is a second-order critical point of $f$.
\begin{fact}\label{fact:CR_GC}
Suppose that Assumption~\ref{ass:hess-lip} holds. Let $\{x^k\}_{k\ge0}$ be the sequence of iterates generated by the CR method. If $\mathcal{L}(f(x^k))$ is bounded for some $k\geq 0$, then the following statements hold.
\begin{enumerate}
	\item[(i)] $v:= \lim_{k\rightarrow\infty} f(x^k)$ exists.
	\item[(ii)] $\lim_{k\rightarrow\infty} \|x^k - x^{k-1}\| = 0$.
	\item[(iii)] The sequence $\{x^k\}_{k\ge0}$ has at least one accumulation point. Moreover, every accumulation point $\bar{x}$ of $\{x^k\}_{k\ge0}$ satisfies
	$$
	f(\bar{x})=v,\quad\nabla f(\bar{x})=0,\quad\nabla^2 f(\bar{x})\succeq 0.
	$$
\end{enumerate}
\end{fact}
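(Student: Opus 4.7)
The plan is to prove the three claims in order, leveraging two ingredients: a cubic sufficient-decrease inequality produced by the cubic term in \eqref{eq:true_cubic_model}, and the first- and second-order KKT conditions characterizing $p_{\sigma_k}(x^k)$ as a \emph{global} minimizer of the nonconvex subproblem \eqref{eq:def-p-sigma}. For (i), I would first derive the estimate $f(x^k)-f(x^{k+1})\geq c\,\|x^{k+1}-x^k\|^3$ for some $c>0$ depending only on $\bar\sigma$. Given the check $f(x^{k+1})\leq \bar f_{\sigma_k}(x^k)$ in Step~1, the natural route is to upper-bound $\bar f_{\sigma_k}(x^k)$: writing $d^k=x^{k+1}-x^k$ and substituting the first-order condition $\nabla f(x^k)+\nabla^2 f(x^k)d^k+\tfrac{\sigma_k}{2}\|d^k\|d^k=0$ into the model value, then using the second-order global-minimizer condition $\nabla^2 f(x^k)+\tfrac{\sigma_k}{2}\|d^k\|I\succeq 0$ to lower-bound the resulting quadratic form $(d^k)^T\nabla^2 f(x^k)d^k$ by $-\tfrac{\sigma_k}{2}\|d^k\|^3$. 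Once this decrease is in hand, $\{f(x^k)\}$ is monotonically non-increasing, so the tail $\{x^k\}_{k\ge k_0}$ lies in the compact level set $\mathcal{L}(f(x^{k_0}))$ on which $f$ is bounded below, and monotone convergence delivers (i).

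For (ii), telescoping the sufficient-decrease inequality yields $\sum_{k\ge k_0}\|d^k\|^3 \leq c^{-1}(f(x^{k_0})-v)<\infty$, which forces $\|d^k\|\to 0$. For (iii), compactness of the tail gives an accumulation point $\bar x$ by Bolzano--Weierstrass, and $f(\bar x)=v$ follows from continuity and (i). To obtain $\nabla f(\bar x)=0$, one rearranges the first-order KKT condition into $\|\nabla f(x^k)\|\leq \|\nabla^2 f(x^k)\|\,\|d^k\|+\tfrac{\sigma_k}{2}\|d^k\|^2$; continuity of $\nabla^2 f$ bounds $\|\nabla^2 f(x^k)\|$ uniformly on the compact level set, and combined with (ii) this gives $\|\nabla f(x^k)\|\to 0$, hence $\nabla f(\bar x)=0$ along any convergent subsequence. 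Passing to the limit in the PSD relation $\nabla^2 f(x^k)+\tfrac{\sigma_k}{2}\|d^k\|I\succeq 0$ then yields $\nabla^2 f(\bar x)\succeq 0$.

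The main obstacle I anticipate is the sufficient-decrease inequality, since it rests on the \emph{global} (not merely local) second-order optimality condition for the nonconvex cubic subproblem --- a PSD constraint that couples the Hessian to the stepsize $\|d^k\|$ --- rather than on strong convexity of the model as in the Levenberg--Marquardt subproblem \eqref{eq:update-LM}. Once that inequality is secured, the remaining work reduces to standard compactness and limit-passing, with the two KKT conditions of the subproblem serving as asymptotically vanishing surrogates of the second-order necessary conditions at $\bar x$.
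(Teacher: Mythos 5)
Your proposal is correct and is essentially the standard argument of Nesterov--Polyak (\cite{NP06}, Proposition~1 and Lemma~4 leading to their Theorem~2), which is exactly what the paper cites for this Fact rather than proving it itself: the two global optimality conditions of the cubic subproblem give $f(x^k)-f(x^{k+1})\geq \tfrac{\sigma_k}{12}\|x^{k+1}-x^k\|^3\geq \tfrac{\bar\sigma}{12}\|x^{k+1}-x^k\|^3$ precisely as you sketch, and the rest follows by monotonicity, telescoping, compactness of the level set, and passing to the limit in the first- and second-order conditions. The one ingredient you flag as the main obstacle---the global second-order condition $\nabla^2 f(x^k)+\tfrac{\sigma_k}{2}\|x^{k+1}-x^k\|I\succeq 0$---is itself a known characterization of global minimizers of the cubic model (\cite{NP06}; see also \cite[Theorem~3.1]{CGT11a}), so your outline closes without gaps.
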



We next review some existing results on the local convergence rate of the CR method. We start with the following result, which can be found in \cite[Theorem 4.1]{G81}.
%
\begin{fact}
\label{fact:CR_LC}
Suppose that Assumption~\ref{ass:hess-lip} holds. Let $\{x^k\}_{k\ge0}$ be the sequence generated by Algorithm~\ref{alg:CR} for solving problem~\eqref{opt:smooth_nc}. If an accumulation point $\bar{x}$ of $\{x^k\}_{k\ge0}$ satisfies
\begin{equation}
\label{eq:sec-suff-cd}
\nabla f(\bar{x}) = 0, \quad \nabla^2 f(\bar{x}) \succ 0,
\end{equation}
then the whole sequence $\{x_k\}_{k\ge0}$ converges at least Q-quadratically to $\bar{x}$.
\end{fact}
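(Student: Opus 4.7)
My plan is to split the argument into two parts: (a) establish that the \emph{whole} sequence $\{x^k\}_{k\ge 0}$ converges to $\bar{x}$, and then (b) upgrade this to a Q-quadratic rate. For (a), the hypothesis $\nabla^2 f(\bar{x})\succ 0$ together with continuity of $\nabla^2 f$ yields $r>0$ and $\mu>0$ such that $\nabla^2 f(x)\succeq \mu I$ on $\mathbb{B}(\bar{x};r)$; hence $f$ is strongly convex on that ball, so $\bar{x}$ is its \emph{only} stationary point, and by Fact~\ref{fact:CR_GC}(iii) $\bar{x}$ is an isolated accumulation point of $\{x^k\}$. Combining this isolation with the asymptotic regularity property $\|x^k - x^{k-1}\|\to 0$ from Fact~\ref{fact:CR_GC}(ii) and the monotone decrease $f(x^{k+1})\le f(x^k)$ built into Algorithm~\ref{alg:CR}, a standard argument (pick a subsequence entering $\mathbb{B}(\bar{x};r/2)$; subsequent iterates cannot escape $\mathbb{B}(\bar{x};r)$ without the step $\|x^{k+1}-x^k\|$ exceeding $r/2$) forces the entire tail of $\{x^k\}$ to remain in $\mathbb{B}(\bar{x};r)$ and to converge to $\bar{x}$.

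For (b), I would use the first-order optimality condition of the cubic subproblem \eqref{eq:def-p-sigma}, namely
\[
\nabla f(x^k) + \nabla^2 f(x^k)(x^{k+1}-x^k) + \frac{\sigma_k}{2}\|x^{k+1}-x^k\|(x^{k+1}-x^k) = 0.
\]
Adding and subtracting $\nabla^2 f(x^k)(x^k-\bar{x})$ and using $\nabla f(\bar{x})=0$ gives
\[
\nabla^2 f(x^k)(x^{k+1}-\bar{x}) = -\bigl[\nabla f(x^k)-\nabla f(\bar{x})-\nabla^2 f(x^k)(x^k-\bar{x})\bigr] - \frac{\sigma_k}{2}\|x^{k+1}-x^k\|(x^{k+1}-x^k).
\]
The bracketed quantity is the standard Taylor remainder, and Assumption~\ref{ass:hess-lip} bounds its norm by $\tfrac{L}{2}\|x^k-\bar{x}\|^2$. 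Taking norms, using $\lambda_n(\nabla^2 f(x^k))\ge \mu$ for large $k$, $\sigma_k\le 2L$, and the elementary bound $\|x^{k+1}-x^k\|^2\le 2\|x^{k+1}-\bar{x}\|^2+2\|x^k-\bar{x}\|^2$, I obtain an inequality of the form
\[
\mu\,\|x^{k+1}-\bar{x}\| \;\le\; \tfrac{L}{2}\|x^k-\bar{x}\|^2 \;+\; 2L\|x^k-\bar{x}\|^2 \;+\; 2L\|x^{k+1}-\bar{x}\|^2.
\]
Since $\|x^{k+1}-\bar{x}\|\to 0$ by part (a), the last quadratic term on the right can be absorbed into the left, which yields $\|x^{k+1}-\bar{x}\|\le C\|x^k-\bar{x}\|^2$ for all sufficiently large $k$, as required.

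I expect part (a) to be the main obstacle: the hypothesis only gives positive definiteness at $\bar{x}$, and a priori there is no guarantee that the iterates do not leave the strong convexity neighborhood or that they are attracted to $\bar{x}$ rather than to some competing accumulation point. The combination of \emph{isolation} of $\bar{x}$ (from local strong convexity) with \emph{asymptotic regularity} (from Fact~\ref{fact:CR_GC}(ii)) is the decisive structural observation that makes this step go through; once the sequence is trapped near $\bar{x}$, the rate estimate in part (b) is essentially a Newton-type calculation in which the cubic perturbation term is controlled by $\|x^{k+1}-x^k\|^2$, already quadratic in the quantities of interest.
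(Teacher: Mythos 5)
Your proposal is correct in substance, but note that the paper never proves Fact~\ref{fact:CR_LC} itself: it is quoted from \cite{G81} and \cite{NP06}, so there is no in-paper proof to compare against. Your part (b) is essentially the same computation the paper carries out later for Theorem~\ref{thm:CR-LC-exact} (first-order optimality of the cubic subproblem, Fact~\ref{fact:lip-result}, and $\sigma_k\le 2L$), with the difference that under non-degeneracy you can use $\nabla^2 f(x^k)\succeq\mu I$ near $\bar{x}$ to lower-bound $\|\nabla^2 f(x^k)(x^{k+1}-\bar{x})\|$ and control $\|x^{k+1}-\bar{x}\|$ directly, whereas the paper, lacking invertibility, must route the estimate through $\dist(\cdot,\mathcal{X})$, the error bound, and the step-length bound of Lemma~\ref{lem:step-bd}. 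The absorption of the $2L\|x^{k+1}-\bar{x}\|^2$ term is legitimate precisely because part (a) already gives $x^{k+1}\to\bar{x}$, so the rate argument is sound.

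Two points in part (a) need tightening, though neither is fatal. First, you invoke Fact~\ref{fact:CR_GC}(ii)--(iii), but that Fact is stated under the hypothesis that $\mathcal{L}(f(x^k))$ is bounded, which Fact~\ref{fact:CR_LC} does not assume. The patch is easy: since $f(x^k)$ is nonincreasing and a subsequence converges to $\bar{x}$, we have $f(x^k)\downarrow f(\bar{x})$, and the per-iteration decrease estimate $f(x^k)-f(x^{k+1})\ge\tfrac{\sigma_k}{12}\|x^{k+1}-x^k\|^3\ge\tfrac{\bar{\sigma}}{12}\|x^{k+1}-x^k\|^3$ from \cite{NP06} then yields $\|x^{k+1}-x^k\|\to 0$ directly; stationarity of accumulation points, if you need it, follows from $\|\nabla f(x^{k+1})\|\le\tfrac{L+\sigma_k}{2}\|x^{k+1}-x^k\|^2$. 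Second, the capture argument as literally sketched (``a step of size less than $r/2$ cannot leave $\mathbb{B}(\bar{x};r)$'') only prevents a single-step escape; it does not stop the iterates from drifting through the annulus and out of the ball over several iterations. The missing step is the one you already list as an ingredient: once $f(x^k)-f(\bar{x})\le\tfrac{\mu}{2}\epsilon^2$ and $x^k\in\mathbb{B}(\bar{x};\epsilon)$ with $2\epsilon\le r$ and the step lengths below $\epsilon$, the new iterate lies in $\mathbb{B}(\bar{x};r)$, and the quadratic growth $f(x)\ge f(\bar{x})+\tfrac{\mu}{2}\|x-\bar{x}\|^2$ on $\mathbb{B}(\bar{x};r)$ together with monotone decrease re-contracts it into $\mathbb{B}(\bar{x};\epsilon)$; iterating this and letting $\epsilon\downarrow 0$ gives $x^k\to\bar{x}$. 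With these two repairs made explicit, your proof is complete.
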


As discussed in the Introduction, the non-degeneracy condition \eqref{eq:sec-suff-cd} implies that $\bar{x}$ is an isolated local minimizer of $f$, which does not hold in many applications. In an attempt to overcome such limitation, Nesterov and Polyak \cite{NP06} considered two classes of functions for which there can be non-isolated second-order critical points and showed that Algorithm \ref{alg:CR} converges superlinearly locally when applied to these functions. The first class is the so-called globally non-degenerate star-convex functions.
\begin{defi}
\label{defi:star-cvx}
We say that $f$ is star-convex if for any $x^*\in\mathcal{X}^*$,
\begin{equation}
\label{eq:def-star-cvx}
f(\alpha x^* + (1 - \alpha) x) \leq \alpha f^* + (1 - \alpha)f(x), \quad \forall x\in\RR^n, \; \forall\alpha\in[0,1].
\end{equation}
\end{defi}
\begin{defi}
\label{defi:global-nondeg}
We say that the optimal solution set $\mathcal{X}^*$ of $f$ is globally non-degenerate if there exists a scalar $\alpha>0$ such that 
\begin{equation} \label{eq:global-nondeg}
f(x) - f^* \geq \frac{\alpha}{2}\cdot{\rm dist}^2(x,\mathcal{X}^*), \quad \forall x\in\mathbb{R}^n.
\end{equation}
\end{defi}

\begin{fact}[{\cite[Theorem 5]{NP06}}]
\label{fact:super-conv-star}
Suppose that Assumption \ref{ass:hess-lip} holds, $f$ is star-convex, and $\mathcal{X}^*$ is globally non-degenerate. Then, there exist a scalar $\gamma>0$ and an integer $k_0\geq 0$ such that
$$
f(x^{k+1}) - f^* \leq \gamma \left( f(x^k) - f^* \right)^\frac{3}{2}, \quad \forall k\geq k_0.
$$
\end{fact}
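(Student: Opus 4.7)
The plan is to exploit the global minimality of $p_{\sigma_k}(x^k) = x^{k+1}$ in the cubic model \eqref{eq:true_cubic_model} by upper-bounding $f(x^{k+1})$ via the value of that model at a carefully chosen trial point---namely a projection of $x^k$ onto $\mathcal{X}^*$, possibly interpolated back toward $x^k$. Concretely, for each $k$ I would pick $\bar{x}^k \in \Argmin_{y \in \mathcal{X}^*} \|x^k - y\|$ (well-defined since $\mathcal{X}^*$ is closed and non-empty) and consider the segment $p(\tau) = (1-\tau)\,x^k + \tau\,\bar{x}^k$ for $\tau \in [0,1]$. Because Algorithm~\ref{alg:CR} is monotone in $f$, both $x^k$ and $\bar{x}^k$ lie in $\mathcal{L}(f(x^0)) \subset \operatorname{int}(\mathcal{F})$, and convexity of $\mathcal{F}$ puts the whole segment inside $\mathcal{F}$, so the Hessian Lipschitz bound from Assumption~\ref{ass:hess-lip} is available on the segment.

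The core chain is then short. By the sufficient decrease \eqref{eq:suff-decrease-copy} and the definition of $\bar{f}_{\sigma_k}(x^k)$ in \eqref{eq:def-p-sigma}, $f(x^{k+1}) \leq f_{\sigma_k}(p(\tau); x^k)$ for every $\tau \in [0,1]$. The standard consequence of Lipschitz Hessian gives
\[
f_{\sigma_k}(p(\tau); x^k) \;\leq\; f(p(\tau)) + \frac{L+\sigma_k}{6}\,\|p(\tau) - x^k\|^3 \;\leq\; f(p(\tau)) + \frac{L}{2}\,\tau^3\,\dist^3(x^k,\mathcal{X}^*),
\]
where the second inequality uses $\sigma_k \leq 2L$. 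Star-convexity (Definition~\ref{defi:star-cvx}) gives $f(p(\tau)) \leq (1-\tau) f(x^k) + \tau f^*$, and the global non-degeneracy inequality \eqref{eq:global-nondeg} upgrades the distance term via $\dist^3(x^k,\mathcal{X}^*) \leq (2/\alpha)^{3/2} (f(x^k)-f^*)^{3/2}$. Assembling these and subtracting $f^*$,
\[
f(x^{k+1}) - f^* \;\leq\; (1-\tau)\bigl(f(x^k)-f^*\bigr) + \frac{L}{2}\Bigl(\frac{2}{\alpha}\Bigr)^{3/2} \tau^3 \bigl(f(x^k)-f^*\bigr)^{3/2}.
\]

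To close the proof I would simply take $\tau = 1$, which annihilates the first-order term and produces $f(x^{k+1}) - f^* \leq \gamma\,(f(x^k)-f^*)^{3/2}$ with $\gamma = (L/2)(2/\alpha)^{3/2}$; since $\tau=1$ is always admissible, the bound actually holds for every $k \geq 0$, so one may take $k_0 = 0$. There is no serious obstacle once the trial point is identified; the only mildly subtle step is realizing that, although the cubic subproblem \eqref{eq:update-CR} is nonconvex and $p_{\sigma_k}(x^k)$ has no closed form, the sufficient decrease property lets us \emph{evaluate} the cubic model at any point we wish in order to bound $f(x^{k+1})$ from above, and plugging in the projection onto $\mathcal{X}^*$ converts the cubic $\|p-x^k\|^3$ penalty into exactly the target $3/2$-power of the optimality gap. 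Star-convexity enters only to control $f(p(\tau))$ away from $\bar{x}^k$; interestingly, with the choice $\tau=1$ it is not strictly needed, suggesting that the assumption is included mainly to parallel the original Nesterov--Polyak framework.
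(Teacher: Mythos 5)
Your argument is correct. Note that the paper itself offers no proof of this statement --- it is quoted as a known result from Nesterov--Polyak [NP06, Theorem~5] --- so the right comparison is with that original argument, and yours is essentially the same mechanism in streamlined form: bound $f(x^{k+1})$ by $\bar{f}_{\sigma_k}(x^k) \le f_{\sigma_k}(p;x^k)$ at a trial point $p$ on the segment from $x^k$ to its projection onto $\mathcal{X}^*$, use the cubic overestimation $f_{\sigma_k}(p;x^k) \le f(p) + \tfrac{L+\sigma_k}{6}\|p-x^k\|^3$ (valid since both endpoints lie in $\mathcal{L}(f(x^0)) \subset \mathcal{F}$ and $\mathcal{F}$ is convex, as you checked), and convert $\dist^3(x^k,\mathcal{X}^*)$ into $(f(x^k)-f^*)^{3/2}$ via the non-degeneracy condition \eqref{eq:global-nondeg}. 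Where Nesterov--Polyak minimize over $\tau \in [0,1]$ and thereby obtain a two-phase global analysis (a sublinear phase driven by star-convexity followed by the superlinear phase), you simply set $\tau = 1$, which yields the stated inequality for every $k \ge 0$ with the explicit constant $\gamma = \tfrac{L}{2}(2/\alpha)^{3/2}$; this is a legitimate strengthening of the statement as written, since it is only an existence claim for $\gamma$ and $k_0$. Your closing observation is also accurate and worth keeping in mind: with $\tau = 1$ star-convexity plays no role in this particular inequality --- quadratic growth alone suffices --- whereas in [NP06] star-convexity is what makes the estimate useful, by guaranteeing that $f(x^k) - f^*$ actually decreases into the region where the $3/2$-power recursion is a contraction.
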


The second class of functions studied in \cite{NP06} is the so-called gradient-dominated functions.
\begin{defi}
\label{defi:grad-domi}
We say that $f$ is gradient-dominated of degree $2$ if there exists a scalar $\tau_f>0$ such that
\begin{equation}
\label{eq:grad-domi}
f(x) - f^* \leq \tau_f \|\nabla f(x)\|^2, \quad \forall x\in\mathbb{R}^n.
\end{equation}
\end{defi}
It is worth mentioning that the inequality \eqref{eq:grad-domi} is an instance of the \L{}ojasiewicz inequality, which has featured prominently in the convergence analysis of iterative methods; see, \eg,~\cite{LP17} and the references therein. Indeed, recall that $f$ is said to satisfy the \L{}ojasiewicz inequality with exponent $\theta\in[\frac{1}{2},1)$ at $\bar{x}\in\mathbb{R}^n$ if there exist a scalar $c>0$ and a neighborhood $\mathcal{U}$ of $\bar{x}$ such that
$$ |f(x) - f(\bar{x})|^{\theta} \leq c \|\nabla f(x)\|, \quad \forall x\in\mathcal{U}. $$
Hence, the inequality~\eqref{eq:grad-domi} is simply the \L{}ojasiewicz inequality at any global minimizer of $f$ with $\theta = \frac{1}{2}$ and $\mathcal{U} = \mathbb{R}^n$.

\begin{fact}[{\cite[Theorem 7]{NP06}}]
\label{fact:super-conv-gd}
Suppose that Assumption \ref{ass:hess-lip} holds and $f$ is gradient-dominated of degree $2$. Then, there exist a scalar $\gamma>0$ and an integer $k_0\geq 0$ such that
$$
f(x^{k+1}) - f^* \leq \gamma \left( f(x^k) - f^* \right)^{\frac{4}{3}}, \quad \forall k\geq k_0.
$$
\end{fact}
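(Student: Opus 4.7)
The plan is to chain together two classical per-iteration estimates for the CR method (both going back to Nesterov--Polyak) with the gradient-dominated inequality \eqref{eq:grad-domi}. Write $r_k := f(x^k) - f^*$ throughout. Since Step~1 of Algorithm~\ref{alg:CR} enforces $f(x^{k+1}) \le \bar f_{\sigma_k}(x^k) \le f_{\sigma_k}(x^k;x^k) = f(x^k)$, the whole trajectory stays in $\mathcal{L}(f(x^0)) \subset \mathrm{int}(\mathcal{F})$, so Assumption~\ref{ass:hess-lip} applies at every iterate and along every segment $[x^k,x^{k+1}]$.

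First I would establish the two inequalities that form the engine of the argument: a lower bound on the function decrease in terms of the step length,
\begin{equation}\label{eq:plan1}
f(x^k) - f(x^{k+1}) \;\ge\; \tfrac{\sigma_k}{12}\,\|x^{k+1}-x^k\|^3 \;\ge\; \tfrac{\bar\sigma}{12}\,\|x^{k+1}-x^k\|^3,
\end{equation}
and a Hessian-Lipschitz upper bound on the new gradient,
\begin{equation}\label{eq:plan2}
\|\nabla f(x^{k+1})\| \;\le\; \bigl(L + \tfrac{\sigma_k}{2}\bigr)\,\|x^{k+1}-x^k\|^2 \;\le\; 2L\,\|x^{k+1}-x^k\|^2.
\end{equation}
Inequality \eqref{eq:plan1} comes from the sufficient decrease condition \eqref{eq:suff-decrease-copy} combined with the identity obtained by plugging the first-order optimality condition of the cubic subproblem into $f_{\sigma_k}(x^{k+1};x^k)$ and then controlling the quadratic term with the second-order optimality condition. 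Inequality \eqref{eq:plan2} follows from that same first-order condition
\[
\nabla f(x^k) + \nabla^2 f(x^k)(x^{k+1}-x^k) + \tfrac{\sigma_k}{2}\|x^{k+1}-x^k\|\,(x^{k+1}-x^k) = 0
\]
together with the Lipschitz-Hessian remainder bound on $\nabla f(x^{k+1}) - \nabla f(x^k) - \nabla^2 f(x^k)(x^{k+1}-x^k)$.

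Eliminating $\|x^{k+1}-x^k\|$ between \eqref{eq:plan1} and \eqref{eq:plan2} yields
\[
f(x^k) - f(x^{k+1}) \;\ge\; c\,\|\nabla f(x^{k+1})\|^{3/2}
\]
with an explicit constant $c = \tfrac{\bar\sigma}{12}(2L)^{-3/2} > 0$. Substituting the gradient-dominated inequality \eqref{eq:grad-domi} at $x^{k+1}$ in the form $\|\nabla f(x^{k+1})\|^{3/2} \ge \tau_f^{-3/4}\,r_{k+1}^{3/4}$ converts this into
\[
r_k \;\ge\; r_{k+1} + c\,\tau_f^{-3/4}\,r_{k+1}^{3/4} \;\ge\; c\,\tau_f^{-3/4}\,r_{k+1}^{3/4},
\]
and raising both sides to the $4/3$ power gives the advertised bound $r_{k+1} \le \gamma\, r_k^{4/3}$ with $\gamma = (c\,\tau_f^{-3/4})^{-4/3}$. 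The inequality in fact holds for every $k \ge 0$, so one may take $k_0 = 0$.

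The main obstacle is the per-iteration estimate \eqref{eq:plan1}. Its derivation rests on the \emph{global} minimality of $x^{k+1}$ in the subproblem \eqref{eq:def-p-sigma}: both the first-order and the second-order necessary conditions of the cubic model are used to show that the gain from the cubic penalty term strictly dominates any possible loss from a negative curvature direction. Once this estimate, along with the more routine gradient bound \eqref{eq:plan2}, is in hand, the remainder of the proof is a purely algebraic manipulation that invokes the gradient-dominated hypothesis only through the constant $\tau_f$; in particular no convexity of $f$ and no isolation of minimizers is required, which is consistent with $\mathcal{X}^*$ being possibly a non-trivial set.
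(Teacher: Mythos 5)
This statement is quoted in the paper without proof (it is exactly Theorem 7 of Nesterov and Polyak \cite{NP06}), and your argument is essentially the proof from that source: the per-iteration decrease bound $f(x^k)-f(x^{k+1}) \ge \tfrac{\sigma_k}{12}\|x^{k+1}-x^k\|^3$, the gradient bound $\|\nabla f(x^{k+1})\| \le \tfrac{L+\sigma_k}{2}\|x^{k+1}-x^k\|^2$ coming from the subproblem's first-order condition plus the Lipschitz-Hessian remainder estimate, and the gradient-dominance inequality chained together exactly as you describe, so the proposal is correct and in fact gives the bound with $k_0=0$. The one detail to make explicit when deriving the decrease bound is that you must invoke the global-optimality characterization $\nabla^2 f(x^k) + \tfrac{\sigma_k}{2}\|x^{k+1}-x^k\|\, I \succeq 0$ of the cubic subproblem (as you hint by stressing global minimality), since the mere local second-order necessary condition only yields $d^T\nabla^2 f(x^k) d \ge -\sigma_k\|d\|^3$ for $d = x^{k+1}-x^k$, which is too weak to produce the $\sigma_k/12$ constant.
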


From the definitions, it is not hard to see that both globally non-degenerate star-convex functions and gradient-dominated functions can be nonconvex and can have non-isolated second-order critical points. Nevertheless, the convergence rates obtained in Facts \ref{fact:super-conv-star} and \ref{fact:super-conv-gd} are weaker than that in Fact \ref{fact:CR_LC} in the following two aspects: (i) only superlinear rates of order $\frac{3}{2}$ and $\frac{4}{3}$ are established for these two classes respectively, while a quadratic rate is achieved in Fact \ref{fact:CR_LC}; (ii) only the  convergence rate of the objective values $\{f(x^k)\}_{k\ge0}$ is proved for these two classes, which is weaker than the convergence rate of the iterates $\{x^k\}_{k\ge0}$ in Fact \ref{fact:CR_LC}. As we shall see in Section \ref{sec:LC}, using our analysis approach, the superlinear convergence rates of $\{f(x^k)\}_{k\ge0}$ in Facts \ref{fact:super-conv-star} and \ref{fact:super-conv-gd} can be improved to the quadratic convergence rate of $\{x^k\}_{k\ge0}$.

\section{Error Bound for the Set of Second-Order Critical Points}\label{sec:eb}
Recall that $\mathcal{X}$ is the set of second-order critical points of $f$, which is a closed subset of $\mathbb{R}^n$ and assumed to be non-empty. In this section, we are interested in the local error bound (EB) condition (5) for $\mathcal{X}$, which we repeat here for the convenience of the readers.

\begin{ass}[EB Condition]
\label{ass:eb}
There exist scalars $\kappa,\rho > 0$ such that
\begin{equation}
\label{eq:eb}
\dist(x, \mathcal{X}) \leq \kappa\|\nabla f(x)\|, \quad\forall x\in\mathcal{N}(\mathcal{X};\rho).
\end{equation}
\end{ass}

Assumption \ref{ass:eb} is much weaker than the non-degeneracy assumption \eqref{eq:sec-suff-cd}. Indeed, if $\bar{x}\in\mathcal{X}$ satisfies condition \eqref{eq:sec-suff-cd}, then it is routine to show that $\bar{x}$ is an isolated second-order critical point and there exist scalars $\kappa,\rho>0$ such that $\dist(x,\mathcal{X})\leq \kappa\|\nabla f(x)\|$ whenever $\|x - \bar{x}\|\leq \rho$. On the other hand, the EB condition \eqref{eq:eb} can still be satisfied when $f$ has no isolated second-order critical points. For instance, it is not hard to verify that $f(x) = (\|x\|^2-1)^2$, whose set of second-order critical points is $\mathcal{X} = \{x: \|x\|=1\}$, satisfies the EB condition \eqref{eq:eb}. Furthermore, at the end of this section we shall show that both the globally non-degenerate star-convex functions and the gradient-dominated functions considered in Facts \ref{fact:super-conv-star} and \ref{fact:super-conv-gd} satisfy Assumption \ref{ass:eb}. In Section \ref{sec:app} we shall show that certain nonconvex functions that arise in phase retrieval and low-rank matrix recovery satisfy Assumption \ref{ass:eb} with overwhelming probability. 


In what follows, we prove that under some mild assumptions, the EB condition \eqref{eq:eb} is equivalent to a quadratic growth condition. For any $x\in\mathbb{R}^n$, we denote by $\hat{x}$ a projection of $x$ onto $\mathcal{X}$; \ie, $\hat{x} \in \Argmin_{z\in\mathcal{X}}\|x - z\|$.
%

\begin{thm}
\label{thm:eb=qg}
Suppose that $\nabla^2 f(x)$ is uniformly continuous on $\mathcal{N}(\mathcal{X};\gamma)$ for some $\gamma>0$. Also, suppose that $f$ satisfies the following separation property: there exists an $\epsilon>0$ such that $\|x - y\|\geq \epsilon$ for any $x,y\in\mathcal{X}$ with $f(x)\neq f(y)$. Then, the following statements are equivalent.
\begin{enumerate}
	\item[(i)] There exist scalars $\kappa,\rho>0$ such that
	\begin{equation}
	\label{eq:eb-copy}
	\dist(x,\mathcal{X}) \leq \kappa\|\nabla f(x)\|, \quad \forall x\in\mathcal{N}(\mathcal{X};\rho).
	\end{equation}
	\item[(ii)] There exist scalars $\alpha,\beta>0$ such that
	\begin{equation}
	\label{eq:quad-growth}
	f(x) \geq f(\hat{x}) + \frac{\alpha}{2}\cdot\dist^2(x,\mathcal{X}), \quad \forall x\in \mathcal{N}(\mathcal{X};\beta).
	\end{equation}
\end{enumerate}
\end{thm}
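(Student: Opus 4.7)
The plan is to prove both implications via second-order Taylor expansions of $f$ and $\nabla f$ at the projection $\hat{x}$, taking advantage of $\nabla f(\hat{x}) = 0$ and $\nabla^2 f(\hat{x}) \succeq 0$. Writing $v := x - \hat{x}$ and $H := \nabla^2 f(\hat{x})$, the uniform continuity of $\nabla^2 f$ on $\mathcal{N}(\mathcal{X};\gamma)$ with modulus $\omega(\cdot)$ (where $\omega(s)\to 0^+$ as $s\to 0^+$), combined with Taylor's theorem with integral remainder, yields
\begin{align*}
\nabla f(x) &= Hv + r_1(x), \quad \|r_1(x)\| \leq \omega(\|v\|)\,\|v\|, \\
f(x) - f(\hat{x}) &= \tfrac{1}{2}\, v^T H v + r_2(x), \quad |r_2(x)| \leq \tfrac{1}{2}\,\omega(\|v\|)\,\|v\|^2.
\end{align*}
The separation hypothesis guarantees that $f$ is locally constant on $\mathcal{X}$, so $f(\hat{x})$ above is unambiguous for $x$ sufficiently close to $\mathcal{X}$, regardless of which projection is chosen.

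For (ii)$\,\Rightarrow\,$(i), I would substitute the quadratic growth inequality into the Taylor expansion of $f$ and shrink the neighborhood so that $\omega(\|v\|) \leq \alpha/2$; this forces $v^T H v \geq \tfrac{\alpha}{2}\|v\|^2$. The Taylor expansion of $\nabla f$ then gives $\langle \nabla f(x), v\rangle = v^T H v + \langle r_1(x), v\rangle \geq \tfrac{\alpha}{4}\|v\|^2$ after further shrinking, and Cauchy--Schwarz delivers $\|\nabla f(x)\| \geq \tfrac{\alpha}{4}\,\dist(x,\mathcal{X})$, i.e., the EB condition with constant $\kappa = 4/\alpha$.

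For (i)$\,\Rightarrow\,$(ii), I would argue by contradiction. If QG fails, one can extract sequences $\alpha_k \to 0^+$ and $x_k$ with $v_k := x_k - \hat{x}_k$ satisfying $\|v_k\| \to 0$ and $f(x_k) - f(\hat{x}_k) < \tfrac{\alpha_k}{2}\|v_k\|^2$. Setting $\tilde v_k := v_k/\|v_k\|$ and $H_k := \nabla^2 f(\hat{x}_k)$, the Taylor estimate for $f$ forces $\tilde v_k^T H_k \tilde v_k \to 0$; since $H_k \succeq 0$, the elementary inequality $\|H_k \tilde v_k\|^2 \leq \|H_k\| \cdot \tilde v_k^T H_k \tilde v_k$ then yields $\|H_k \tilde v_k\| \to 0$, provided $\|H_k\|$ stays bounded along the sequence. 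On the other hand, the EB condition combined with the Taylor estimate for $\nabla f$ gives $\|H_k \tilde v_k\| \geq (1 - \kappa\,\omega(\|v_k\|))/\kappa \geq 1/(2\kappa)$ for all large $k$, contradicting $\|H_k \tilde v_k\| \to 0$.

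The main technical obstacle is to control $\|H_k\|$ uniformly along the contradicting sequence when $\mathcal{X}$ may be unbounded: uniform continuity of $\nabla^2 f$ alone does not bound $\|H_k\|$ if $\hat{x}_k$ escapes to infinity. I would handle this by first proving quadratic growth locally around each fixed point of $\mathcal{X}$ (where $\|H_k\|$ is automatically bounded by continuity), and then using the separation property to align $f$-values across local components of $\mathcal{X}$, enabling a patching argument that assembles uniform constants $\alpha,\beta>0$ valid on the global neighborhood $\mathcal{N}(\mathcal{X};\beta)$.
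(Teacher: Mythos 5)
Your (ii)$\Rightarrow$(i) argument is correct and essentially coincides with the paper's: expand $f$ and $\nabla f$ around the projection $\hat{x}$, use $\nabla f(\hat{x})=0$, $\nabla^2 f(\hat{x})\succeq 0$ and the uniform continuity modulus to absorb the remainders, then apply Cauchy--Schwarz; the paper packages the remainder control as the lower bound $\lambda_{\min}[\nabla^2 f(\tilde{x}(t))]\geq-\alpha/2$ along the segment, but the content and the constant $\kappa=4/\alpha$ are the same.

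The (i)$\Rightarrow$(ii) direction has a genuine gap, and it is precisely the one you flag yourself. Your contradiction rests on $\|H_k\tilde{v}_k\|^2\leq\|H_k\|\,\tilde{v}_k^TH_k\tilde{v}_k$, which is vacuous unless $\|H_k\|=\|\nabla^2 f(\hat{x}_k)\|$ stays bounded along the contradicting sequence, and nothing in the hypotheses provides such a bound when $\mathcal{X}$ is unbounded: uniform continuity of $\nabla^2 f$ on $\mathcal{N}(\mathcal{X};\gamma)$ does not bound its norm there. The proposed repair --- prove quadratic growth locally around each fixed $\bar{x}\in\mathcal{X}$ (where $\|H_k\|$ is bounded because $\hat{x}_k\rightarrow\bar{x}$) and then ``patch'' --- does not close the gap: the local argument produces constants $\alpha(\bar{x}),\beta(\bar{x})$ that may degenerate as $\bar{x}$ ranges over a non-compact $\mathcal{X}$, and neither the separation property (which only aligns $f$-values, not constants) nor uniform continuity of the Hessian supplies the uniformity needed to pass from point-dependent local growth to statement (ii), which requires a single pair $\alpha,\beta>0$ valid on all of $\mathcal{N}(\mathcal{X};\beta)$. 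A covering argument would finish the job if $\mathcal{X}$ were compact, but the theorem makes no such assumption; nor does fixing the target constant in advance (say $\alpha\sim 1/\kappa$) help, since the contradiction still needs an upper bound on $\|H_k\|$. The paper's proof avoids Hessian upper bounds entirely in this direction: it fixes the candidate growth constant as $\frac{1}{16\kappa}$, tied to $\kappa$ through the $\frac{1}{4\kappa}$-modulus of Hessian continuity, uses only the uniform lower bound $\lambda_{\min}[\nabla^2 f]\geq-\frac{1}{4\kappa}$ near $\mathcal{X}$ to get $f(x)-f(\hat{x})\geq-\frac{1}{8\kappa}\dist^2(x,\mathcal{X})$, and then derives the contradiction by applying Ekeland's variational principle and subdifferential calculus to $f+\frac{1}{8\kappa}\dist^2(\cdot,\mathcal{X})$ localized in $\mathbb{B}(\hat{x}^k;\frac{\epsilon}{3})$, where the separation property guarantees the localized optimal value equals $f(\hat{x}^k)$; the EB condition applied to the Ekeland points then gives the contradiction. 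To complete your route you would need an extra hypothesis (compact $\mathcal{X}$, or a uniform bound on $\|\nabla^2 f\|$ near $\mathcal{X}$) or a variational argument of the paper's type.
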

Before presenting the proof, some remarks on the assumptions in Theorem \ref{thm:eb=qg} are in order. First, the uniform continuity of $\nabla^2 f(x)$ on $\mathcal{N}(\mathcal{X};\gamma)$ for some $\gamma>0$ holds if $\mathcal{X}$ is a compact set. Second, the separation property in Theorem \ref{thm:eb=qg} has appeared in \cite{LT93}, in which it was referred to as \emph{proper separation of isocost surfaces}, and has long played a role in the study of error bounds. It holds for many nonconvex functions in applications and holds trivially if $f$ is convex.

\begin{proof}[Proof of Theorem \ref{thm:eb=qg}]
We first prove $(i)\Rightarrow(ii)$. Suppose that~\eqref{eq:eb-copy} holds with some $\kappa,\rho>0$. Since $\nabla^2 f(x)$ is uniformly continuous on $\mathcal{N}(\mathcal{X};\gamma)$, there exists a scalar $\beta_0>0$ such that 
\begin{equation}
\label{eq:uniform-cont}
\|\nabla^2 f(x) - \nabla^2 f(y)\| \leq \frac{1}{4\kappa}, \quad \forall x,y\in\mathcal{N}(\mathcal{X};\gamma) \;\mbox{with}\; \|x - y\|\leq \beta_0.
\end{equation}
Let $\beta_1 := \min\{\beta_0,\rho,\gamma\}>0$, $x\in\mathcal{N}(\mathcal{X};\beta_1)$ be arbitrarily chosen, and $x(t) = \hat{x} + t(x - \hat{x})$ for $t\in[0,1]$. Thus, $\|x(t) - \hat{x}\|\leq \|x - \hat{x}\| \leq \beta_1$ for any $t\in[0,1]$. By~\eqref{eq:uniform-cont}, we have
$$ \|\nabla^2 f(x(t)) - \nabla^2 f(\hat{x})\| \leq \frac{1}{4\kappa}, \quad \forall t\in[0,1]. $$
This, together with the inequality $|\lambda_{\min}(A) - \lambda_{\min}(B)| \leq \|A - B\|$ for any real symmetric matrices $A$ and $B$ (see, \eg,~\cite[Corollary III.2.6]{B97}), yields
\begin{equation}
\label{eq:min-eig-lower-bd}
\lambda_{\min}[\nabla^2 f(x(t))] \geq \lambda_{\min}[\nabla^2 f(\hat{x})] - \frac{1}{4\kappa} \geq -\frac{1}{4\kappa}, \quad \forall t\in[0,1],
\end{equation}
where the last inequality is due to $\nabla^2 f(\hat{x})\succeq 0$. By the integral form of Taylor's series, we have
\begin{equation*}
\label{eq:taylor-series}
f(x) -  f(\hat{x}) = \langle \nabla f(\hat{x}), x - \hat{x}\rangle + \int_0^1(1-t)(x - \hat{x})^T\nabla^2 f(x(t)) (x - \hat{x})dt. 
\end{equation*}
This, together with~\eqref{eq:min-eig-lower-bd}, $\nabla f(\hat{x}) = 0$, and $\|x - \hat{x}\| = \dist(x,\mathcal{X})$, yields
\begin{equation}
\label{eq:inverse-qg}
f(x) - f(\hat{x}) \geq - \frac{1}{8\kappa}\cdot\mbox{dist}^2(x,\mathcal{X}), \quad \forall x\in\mathcal{N}(\mathcal{X};\beta_1).
\end{equation}
Our next goal is to prove that there exists a scalar $\beta>0$ such that 
\begin{equation}
\label{eq:qg-claim}
f(x) \geq f(\hat{x}) + \frac{1}{16\kappa}\cdot\mbox{dist}^2(x,\mathcal{X}), \quad \forall x\in\mathcal{N}(\mathcal{X};\beta).
\end{equation}
This would then imply that statement (ii) holds. Suppose that~\eqref{eq:qg-claim} does not hold for any $\beta>0$. Then, there exist a sequence $\{x^k\}_{k\ge0}$ and a sequence of positive scalars $\{t_k\}_{k\ge0}$ such that $\lim_{k\rightarrow\infty}\mbox{dist}(x^k,\mathcal{X}) = 0$ and
\begin{equation}
\label{eq:contra-qg}
f(x^k) \leq f(\hat{x}^k) + \frac{1}{16\kappa}\cdot\mbox{dist}^2(x^k,\mathcal{X}) - t_k, \quad \forall k\geq 0.
\end{equation}
Without loss of generality, we assume that $x^k\in\mathcal{N}(\mathcal{X};\beta_1)$ for all $k\geq 0$. By \eqref{eq:contra-qg}, we have $x^k\notin\mathcal{X}$ for all $k\geq 0$. Let $\lambda_k:= \frac{1}{2}\cdot\dist(x^k,\mathcal{X})$. Hence, $\lim_{k\rightarrow\infty}\lambda_k = 0$ and $\lambda_k>0$ for all $k\geq 0$. Given any $k\geq 0$, consider the problem
\begin{equation}
\label{eq:auxilary-prob}
\begin{aligned}
v_k := \; & \min \; \left\{ f(x) + \frac{1}{8\kappa}\cdot\dist^2(x,\mathcal{X}) \right\} \\
& \;\;\mbox{s.t.} \,\, x\in\mathcal{N}(\mathcal{X};\beta_1)\cap\mathbb{B}\left(\hat{x}^k;\frac{\epsilon}{3}\right).
\end{aligned}
\end{equation}
Since $\hat{x}^k$ is feasible for~\eqref{eq:auxilary-prob} and $\hat{x}^k\in\mathcal{X}$, we have $v_k\leq f(\hat{x}^k)$. Let $x$ be an arbitrary feasible point of~\eqref{eq:auxilary-prob}. Then, it follows from~\eqref{eq:inverse-qg} that $ f(x)  + \frac{1}{8\kappa}\cdot\mbox{dist}^2(x,\mathcal{X}) \geq f(\hat{x})$. In addition, since $x\in\mathbb{B}\left( \hat{x}^k;\frac{\epsilon}{3} \right)$, we have
$\|\hat{x} - \hat{x}^k\| \leq \|x - \hat{x}\| + \|x - \hat{x}^k\| \leq 2\|x - \hat{x}^k\| \leq \frac{2}{3}\epsilon<\epsilon$.
This, together with the fact that $\hat{x},\hat{x}^k\in\mathcal{X}$ and our assumption in Theorem \ref{thm:eb=qg}, implies that $f(\hat{x}) = f(\hat{x}^k)$. Hence, every feasible point $x$ of \eqref{eq:auxilary-prob} satisfies $ f(x) + \frac{1}{8\kappa}\cdot\mbox{dist}^2(x,\mathcal{X}) \geq f(\hat{x}^k)$, 
which implies that $v_k\geq f(\hat{x}^k)$. Thus, we  can conclude that $v_k = f(\hat{x}^k)$. Combining this with~\eqref{eq:contra-qg}, we obtain
\begin{equation}
\label{eq:ekld-vp-1}
f(x^k) + \frac{1}{8\kappa}\cdot\mbox{dist}^2(x^k,\mathcal{X}) \leq v_k + \tau_k, \quad \forall k\geq 0,
\end{equation}
where $\tau_k = \frac{3}{16\kappa}\cdot\mbox{dist}^2(x^k,\mathcal{X}) - t_k$. Since $\lim_{k\rightarrow\infty}\mbox{dist}(x^k,\mathcal{X}) = 0$, there exists a $k_0\geq 0$ such that $x^k$ is feasible for~\eqref{eq:auxilary-prob} for any $k\geq k_0$. By this,~\eqref{eq:auxilary-prob},~\eqref{eq:ekld-vp-1}, and Ekeland's variational principle (see, \eg, \cite[Theorem 2.26]{M06}), there exists a sequence $\{z^k\}_{k\geq k_0}$ such that for all $k\geq k_0$, $\|x^k - z^k\|\leq \lambda_k$ and
\begin{equation}
\label{eq:ekld-vp-2}
\begin{aligned}
z^k = \;\argmin & \; \left\{ f(x) + \frac{1}{8\kappa}\cdot\dist^2(x,\mathcal{X}) + \frac{\tau_k}{\lambda_k}\|x - z^k\| \right\} \\
\mbox{s.t.}\;\; & \;\, x\in\mathcal{N}(\mathcal{X};\beta_1)\cap\mathbb{B}\left(\hat{x}^k;\frac{\epsilon}{3}\right).
\end{aligned}
\end{equation}
Since $\lim_{k\rightarrow\infty}\lambda_k = 0$, we have $\lim_{k\rightarrow\infty}\|x^k - z^k\| = 0$. In addition, noticing that
$$ \mbox{dist}(z^k,\mathcal{X}) \leq \|z^k - \hat{x}^k\| \leq \|z^k - x^k\| + \|x^k - \hat{x}^k\| = \|z^k - x^k\| + \mbox{dist}(x^k,\mathcal{X}), $$
we obtain $\lim_{k\rightarrow\infty}\|z^k - \hat{x}^k\| = \lim_{k\rightarrow\infty}\mbox{dist}(z^k,\mathcal{X}) = 0$. Hence, there exists a $k_1\geq k_0$ such that $z^k$ is in the interior of the feasible set of~\eqref{eq:ekld-vp-2} for all $k\geq k_1$. Consequently, by the generalized Fermat's rule (see, \eg, \cite[Theorem 10.1]{RW98}), we have
\begin{equation}
\label{eq:opt-z-int-all}
0 \in \partial\left( f(\cdot) + \frac{1}{8\kappa}\cdot\mbox{dist}^2(\cdot,\mathcal{X}) + \frac{\tau_k}{\lambda_k}\|\cdot - z^k\| \right)(z^k), \footnote[3]{Given an extended-real-valued function $h:\mathbb{R}^n\rightarrow(-\infty,+\infty]$ and an $x\in\mbox{dom}(h):=\{z\in\mathbb{R}^n: h(z)<\infty\}$, we denote by $\partial h(x)$ the \emph{limiting subdifferential} (known also as the general or Mordukhovich subdifferential) of $h$ at $x$; see, \eg,~\cite[Definition 1.77]{M06}.} \quad \forall k\geq k_1.
\end{equation}
Since $f$ is continuously differentiable, we obtain from \cite[Corollary 1.82]{M06} that $\partial f(z^k) = \{\nabla f(z^k)\}$. In addition, we have
$$
\partial\left(\mbox{dist}^2(\cdot,\mathcal{X})\right)(z^k) = 2\cdot\mbox{dist}(z^k,\mathcal{X})\cdot\partial\left(\mbox{dist}(\cdot,\mathcal{X})\right)(z^k) \subset 2\cdot\mbox{dist}(z^k,\mathcal{X})\cdot \mathbb{B}(0;1),
$$
where the equality follows from \cite[Corollary 1.111(i)]{M06} and the inclusion is due to \cite[Example 8.53]{RW98}. Also, we have $\partial\left(\|\cdot - z^k\|\right)(z^k) = \mathbb{B}(0;1)$. These, together with \eqref{eq:opt-z-int-all}, yield
\begin{align}
0 & \in \partial\left( f(\cdot) + \frac{1}{8\kappa}\cdot\mbox{dist}^2(\cdot,\mathcal{X}) + \frac{\tau_k}{\lambda_k}\|\cdot - z^k\| \right)(z^k) \nonumber \\
& = \nabla f(z^k) + \partial \left(\frac{1}{8\kappa}\cdot\mbox{dist}^2(\cdot,\mathcal{X}) + \frac{\tau_k}{\lambda_k}\|\cdot - z^k\| \right)(z^k) \label{eq:subdiff-=} \\
& \subset \nabla f(z^k) + \partial\left(\frac{1}{8\kappa}\cdot\mbox{dist}^2(\cdot,\mathcal{X})\right)(z^k) + \partial \left(\frac{\tau_k}{\lambda_k}\|\cdot - z^k\| \right)(z^k) \label{eq:subdiff-in} \\
& \subset \nabla f(z^k) + \left(\frac{1}{4\kappa}\cdot\mbox{dist}(z^k,\mathcal{X}) + \frac{\tau_k}{\lambda_k}\right)\mathbb{B}(0;1), \quad \forall k\geq k_1,\label{eq:opt-z-int}
\end{align}
where \eqref{eq:subdiff-=} and \eqref{eq:subdiff-in} are due to \cite[Exercise 10.10]{RW98}. By \eqref{eq:opt-z-int}, we have 
\begin{equation}
\label{eq:norm-bd}
\|\nabla f(z^k)\| \leq \frac{1}{4\kappa}\cdot\mbox{dist}(z^k,\mathcal{X}) + \frac{\tau_k}{\lambda_k}, \quad \forall k\geq k_1.
\end{equation}
Moreover, we have $z^k\in\mathcal{N}(\mathcal{X};\beta_1)$ for all $k\geq k_0$ from \eqref{eq:ekld-vp-2}. This, together with $\beta_1\leq\rho$ and~\eqref{eq:eb-copy}, yields $\mbox{dist}(z^k,\mathcal{X}) \leq \kappa\|\nabla f(z^k)\|$ for all $k\geq k_0$. By this, $k_1\geq k_0$, and \eqref{eq:norm-bd}, we have
$$ \mbox{dist}(z^k,\mathcal{X}) \leq \kappa\|\nabla f(z^k)\| \leq \frac{1}{4}\cdot\mbox{dist}(z^k,\mathcal{X}) + \frac{\kappa\tau_k}{\lambda_k}, \quad \forall k\geq k_1,$$
which results in $\mbox{dist}(z^k,\mathcal{X}) \leq \frac{4\kappa\tau_k}{3\lambda_k}$ for all $k\geq k_1$. This further leads to
$$ \mbox{dist}(x^k,\mathcal{X}) = \|x^k - \hat{x}^k\| \leq \|x^k - \hat{z}^k\| \leq \|x^k - z^k\| + \mbox{dist}(z^k,\mathcal{X}) \leq \lambda_k + \frac{4\kappa\tau_k}{3\lambda_k}, \quad \forall k\geq k_1.$$
By the definitions of $\tau_k$ and $\lambda_k$, the above yields
$$ \mbox{dist}^2(x^k,\mathcal{X}) \leq \mbox{dist}^2(x^k,\mathcal{X}) - \frac{4\kappa t_k}{3}, \quad \forall k\geq k_1,$$
which is a contradiction since $\kappa>0$ and $t_k >0$ for all $k\geq 0$. Therefore, there exists a scalar $\beta>0$ such that~\eqref{eq:qg-claim} holds, which implies that statement (ii) holds.

\smallskip
We next prove $(ii)\Rightarrow(i)$. Suppose that~\eqref{eq:quad-growth} holds with some $\alpha,\beta>0$. Since $\nabla^2 f(x)$ is uniformly continuous on $\mathcal{N}(\mathcal{X};\gamma)$, there exists a scalar $\rho_0>0$ such that 
\begin{equation}
\label{eq:uniform-cont-2}
\|\nabla^2 f(x) - \nabla^2 f(y)\| \leq \frac{\alpha}{2}, \quad \forall x,y\in\mathcal{N}(\mathcal{X};\gamma) \; \mbox{with}\; \|x - y\|\leq \rho_0.
\end{equation}
Let $\rho_1 = \min\{\rho_0,\beta,\gamma\}>0$, $x\in\mathcal{N}(\mathcal{X};\rho_1)$ be arbitrarily chosen, and $\tilde{x}(t) = x + t(\hat{x} - x)$ for $t\in[0,1]$. Using the same arguments as those for~\eqref{eq:min-eig-lower-bd}, one has
\begin{equation}
\label{eq:min-eig-lower-bd-2}
\lambda_{\min}[\nabla^2 f(\tilde{x}(t))] \geq -\frac{\alpha}{2}, \quad \forall t\in[0,1]. 
\end{equation}
By~\eqref{eq:quad-growth}, \eqref{eq:min-eig-lower-bd-2}, and the integral form of Taylor's series, we obtain
\begin{equation*}
\begin{aligned}
\langle \nabla f(x), x - \hat{x}\rangle & = f(x) - f(\hat{x}) + \int_0^1(1-t)(\hat{x} - x)^T\nabla^2 f(\tilde{x}(t))(\hat{x} - x)dt \\
& \geq \frac{\alpha}{2}\|x - \hat{x}\|^2 - \frac{\alpha}{4}\|x - \hat{x}\|^2 = \frac{\alpha}{4}\|x - \hat{x}\|^2.
\end{aligned}
\end{equation*}
Applying the Cauchy-Schwarz inequality and using $\mbox{dist}(x,\mathcal{X}) = \|x - \hat{x}\|$, the above yields
$$ \mbox{dist}(x,\mathcal{X}) \leq \frac{4}{\alpha}\|\nabla f(x)\|, \quad \forall x\in\mathcal{N}(\mathcal{X};\rho_1). $$
Therefore, statement (i) holds as well.
\end{proof}

\smallskip
{\bf Remark.} When $f$ is convex, $\mathcal{X}$ reduces to the set of optimal solutions to $f$ and it is known that the EB condition \eqref{eq:eb-copy} is equivalent to the quadratic growth condition \eqref{eq:quad-growth}; see, \eg, \cite{AG08}. When $f$ is nonconvex, Drusvyatskiy {\it et al.}~\cite{DMN14} studied these two regularity conditions for the set of first-order critical points (replacing $\mathcal{X}$ in both \eqref{eq:eb-copy} and \eqref{eq:quad-growth} by the set of first-order critical points) and proved that they are equivalent under an additional quadratic decrease condition; see \cite[Theorem~3.1]{DMN14}. Our Theorem \ref{thm:eb=qg} is motivated by \cite[Theorem 3.1]{DMN14} and shows that for the set of second-order critical points of a twice continuously differentiable function, the EB condition \eqref{eq:eb-copy} and the quadratic growth condition \eqref{eq:quad-growth} are equivalent without requiring the said additional condition.

\begin{coro}
Suppose that Assumption \ref{ass:eb} and the premise of Theorem \ref{thm:eb=qg} hold. Then, any second-order critical point of $f$ is a local minimizer.
\end{coro}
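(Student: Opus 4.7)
The plan is to derive the conclusion directly from Theorem \ref{thm:eb=qg} together with the separation property assumed in its premise. Since Assumption \ref{ass:eb} holds and the hypotheses of Theorem \ref{thm:eb=qg} are in force, statement (ii) of that theorem gives scalars $\alpha,\beta>0$ such that
$$
f(x) \geq f(\hat{x}) + \frac{\alpha}{2}\cdot\dist^2(x,\mathcal{X}),\qquad \forall x\in\mathcal{N}(\mathcal{X};\beta),
$$
where $\hat{x}$ denotes a projection of $x$ onto $\mathcal{X}$.

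Now fix an arbitrary $\bar{x}\in\mathcal{X}$, and let $\epsilon>0$ be the constant from the separation property in the premise of Theorem \ref{thm:eb=qg}. Choose $\delta := \min\{\beta,\,\epsilon/2\}/2$ and consider any $x$ with $\|x-\bar{x}\|<\delta$. The first step is to note that, since $\bar{x}\in\mathcal{X}$, one has $\dist(x,\mathcal{X}) \leq \|x-\bar{x}\|<\delta\leq \beta$, so the quadratic growth inequality applies at $x$. The second step is to bound $\|\hat{x}-\bar{x}\|$: by the triangle inequality and the defining property of $\hat{x}$,
$$
\|\hat{x}-\bar{x}\| \leq \|\hat{x}-x\| + \|x-\bar{x}\| \leq 2\|x-\bar{x}\| < \epsilon.
$$
Since both $\hat{x}$ and $\bar{x}$ lie in $\mathcal{X}$, the contrapositive of the separation property forces $f(\hat{x})=f(\bar{x})$.

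Combining the two steps, for every $x\in\mathbb{B}(\bar{x};\delta)$ we obtain
$$
f(x) \geq f(\hat{x}) + \frac{\alpha}{2}\cdot\dist^2(x,\mathcal{X}) \geq f(\bar{x}),
$$
which shows that $\bar{x}$ is a local minimizer of $f$. There is no genuine obstacle here beyond chaining the two given facts; the only subtlety is choosing the neighborhood radius small enough (namely $\delta<\epsilon/2$) so that the separation property may be invoked to identify $f(\hat{x})$ with $f(\bar{x})$.
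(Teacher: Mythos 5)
Your proof is correct and follows essentially the same route as the paper's own argument: invoke Theorem \ref{thm:eb=qg} (ii) to get quadratic growth, shrink the neighborhood so that $\|\hat{x}-\bar{x}\|\leq 2\|x-\bar{x}\|<\epsilon$, and use the separation property to conclude $f(\hat{x})=f(\bar{x})$. The only difference is the cosmetic choice of radius ($\min\{\beta,\epsilon/2\}/2$ versus the paper's $\min\{\beta,\epsilon/3\}$), which does not affect the argument.
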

\begin{proof}
Let $\bar{x}$ be an arbitrary second-order critical point of $f$. By Theorem \ref{thm:eb=qg} and Assumption \ref{ass:eb}, the quadratic growth condition \eqref{eq:quad-growth} holds for some $\alpha,\beta>0$. Let $\delta = \min\{\beta,\frac{\epsilon}{3}\}$ and $x$ be an arbitrary point in $\mathcal{N}(\bar{x};\delta)$. It then follows from \eqref{eq:quad-growth} that $f(x) \geq f(\hat{x})$. Moreover, it holds that $\|\hat{x} - \bar{x}\| \leq \|x - \hat{x}\| + \|x - \bar{x}\| \leq 2\|x - \bar{x}\| \leq \frac{2}{3}\epsilon < \epsilon$. By this and the separation property in Theorem \ref{thm:eb=qg}, we have $f(\hat{x}) = f(\bar{x})$. Hence, we obtain $f(x) \geq f(\bar{x})$ for all $x\in\mathcal{N}(\bar{x};\delta)$, which implies that $\bar{x}$ is a local minimizer of $f$. 
\end{proof}

\smallskip
For the rest of this section, we show that the classes of functions considered in Facts \ref{fact:super-conv-star} and~\ref{fact:super-conv-gd} satisfy Assumption \ref{ass:eb}.
\begin{prop}
\label{prop:eb-star}
Suppose that $f$ is star-convex, $\mathcal{X}^*$ is globally non-degenerate, and $\nabla^2 f(x)$ is uniformly continuous on $\mathcal{N}(\mathcal{X}^*;\gamma)$ for some $\gamma>0$. Then, $f$ satisfies Assumption~\ref{ass:eb}.
\end{prop}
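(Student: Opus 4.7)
The plan is to establish Assumption \ref{ass:eb} by combining an upper bound on $f(x) - f^*$ that comes for free from star-convexity with the quadratic lower bound supplied by global non-degeneracy. The key preliminary step is to show that the set $\mathcal{X}$ of second-order critical points coincides with the set $\mathcal{X}^*$ of global minimizers under these hypotheses, so that the global non-degeneracy condition directly controls $\dist(x, \mathcal{X})$.

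First I would verify $\mathcal{X} = \mathcal{X}^*$. The inclusion $\mathcal{X}^* \subseteq \mathcal{X}$ is immediate from the second-order necessary conditions. For the reverse inclusion, pick $\bar{x} \in \mathcal{X}$ and fix any $x^* \in \mathcal{X}^*$. The star-convexity inequality in Definition \ref{defi:star-cvx} at $\bar{x}$ can be rewritten as
\[
\frac{f(\bar{x} + t(x^* - \bar{x})) - f(\bar{x})}{t} \leq -\bigl(f(\bar{x}) - f^*\bigr), \qquad t \in (0, 1].
\]
Sending $t \downarrow 0$ makes the left-hand side converge to $\nabla f(\bar{x})^T(x^* - \bar{x}) = 0$ by $\nabla f(\bar{x}) = 0$, yielding $f(\bar{x}) \leq f^*$ and hence $\bar{x} \in \mathcal{X}^*$.

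Next I would run the same manipulation at an arbitrary $x \in \mathbb{R}^n$, with $x^* = \hat{x}$ a projection of $x$ onto $\mathcal{X}^*$. Passing to the limit $t \downarrow 0$ and applying the Cauchy--Schwarz inequality yields
\[
f(x) - f^* \leq \nabla f(x)^T(x - \hat{x}) \leq \|\nabla f(x)\| \cdot \dist(x, \mathcal{X}^*).
\]
Combining this with the lower bound $\tfrac{\alpha}{2}\,\dist^2(x, \mathcal{X}^*) \leq f(x) - f^*$ from Definition \ref{defi:global-nondeg} and cancelling a factor of $\dist(x, \mathcal{X}^*)$ (the case $x \in \mathcal{X}^*$ being trivial), I obtain $\dist(x, \mathcal{X}^*) \leq \tfrac{2}{\alpha}\|\nabla f(x)\|$ for every $x \in \mathbb{R}^n$. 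Because $\mathcal{X} = \mathcal{X}^*$ by the previous step, this is exactly Assumption \ref{ass:eb} with $\kappa = 2/\alpha$ and any $\rho > 0$.

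I do not anticipate any real obstacle: the argument is essentially elementary once the coincidence $\mathcal{X} = \mathcal{X}^*$ is observed, and taking $t \downarrow 0$ in the star-convexity inequality is legitimate because $f$ is at least $C^1$. In fact the derivation sketched above does not use the uniform continuity of $\nabla^2 f$ on $\mathcal{N}(\mathcal{X}^*; \gamma)$; that hypothesis becomes relevant only if one instead routes the argument through Theorem \ref{thm:eb=qg}, by noting that $\mathcal{X} = \mathcal{X}^*$ together with global non-degeneracy gives the quadratic growth condition for $\mathcal{X}$, that the separation property of Theorem \ref{thm:eb=qg} is automatic because $f$ is constant on $\mathcal{X}$, and then invoking the implication $(\mathrm{ii}) \Rightarrow (\mathrm{i})$ of that theorem.
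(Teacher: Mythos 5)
Your proposal is correct, and after the shared first step it takes a genuinely different route from the paper. The identification $\mathcal{X} = \mathcal{X}^*$ via the limit $t \downarrow 0$ in the star-convexity inequality at a critical point is exactly the paper's argument. From there, however, the paper does not argue directly: it observes that $\mathcal{X} = \mathcal{X}^*$ makes the separation property trivial, that global non-degeneracy is precisely the quadratic growth condition (ii) of Theorem \ref{thm:eb=qg}, and then invokes the implication (ii) $\Rightarrow$ (i) of that theorem — which is where the uniform continuity of $\nabla^2 f$ on $\mathcal{N}(\mathcal{X}^*;\gamma)$ is consumed. Your route instead applies the star-convexity inequality at an \emph{arbitrary} $x$ with $x^* = \hat{x}$, obtaining the first-order bound $f(x) - f^* \leq \|\nabla f(x)\|\,\dist(x,\mathcal{X}^*)$, and pairs it with the quadratic lower bound from Definition \ref{defi:global-nondeg} to get the global estimate $\dist(x,\mathcal{X}^*) \leq \tfrac{2}{\alpha}\|\nabla f(x)\|$ for all $x\in\RR^n$. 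This is sound (the projection exists since $\mathcal{X}^*$ is nonempty and closed, and the limit is legitimate by differentiability of $f$), and it buys you three things the paper's proof does not deliver: an explicit constant $\kappa = 2/\alpha$, a \emph{global} rather than local error bound, and the observation that the uniform continuity of the Hessian is not needed for this particular implication — it is only required if one routes through the general equivalence of Theorem \ref{thm:eb=qg}, as the paper does in order to present the proposition as an instance of that theorem. The trade-off is that the paper's route reuses machinery it has already built and parallels the proof of Proposition \ref{prop:eb-grad-domi}, whereas yours is self-contained and more elementary.
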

\begin{proof}
We first show that for star-convex functions, the set of second-order critical points equals the set of optimal solutions; \ie, $\mathcal{X} = \mathcal{X}^*$. Since it is clear that $\mathcal{X}^*\subset\mathcal{X}$, it suffices to show that $\mathcal{X}\subset\mathcal{X}^*$. Suppose on the contrary that $x\notin\mathcal{X}^*$ for some $x\in\mathcal{X}$. Hence, $\nabla f(x)=0$ and $f(x) > f(x^*)$ for any $x^*\in\mathcal{X}^*$. By this and \eqref{eq:def-star-cvx}, we have that for any $x^*\in\mathcal{X}$,
\begin{align*}
\langle \nabla f(x), x^* - x\rangle & = \lim_{\alpha\downarrow 0} \frac{ f(x + \alpha(x^*-x)) - f(x)}{\alpha} \\
& \leq \lim_{\alpha\downarrow 0} \frac{ \alpha f(x^*) + (1-\alpha) f(x) - f(x) }{\alpha} = f(x^*) - f(x) < 0,
\end{align*}
which contradicts with $\nabla f(x)=0$. Hence, we obtain $\mathcal{X} = \mathcal{X}^*$. This, together with our assumption in Proposition \ref{prop:eb-star}, implies that $\nabla^2 f(x)$ is uniformly continuous on $\mathcal{N}(\mathcal{X};\gamma)$ for some $\gamma>0$. Also, since $\mathcal{X} = \mathcal{X}^*$, we have $f(x) = f(y) = f^*$ for any $x,y\in\mathcal{X}$, which implies that the separation property in Theorem \ref{thm:eb=qg} holds. Moreover, by $\mathcal{X} = \mathcal{X}^*$ and the assumption that $\mathcal{X}^*$ is globally non-degenerate, statement (ii) of Theorem \ref{thm:eb=qg} holds. Hence, statement (i) of Theorem~\ref{thm:eb=qg} holds as well, which implies that $f$ satisfies Assumption \ref{ass:eb}. 
\end{proof}

\begin{prop}
\label{prop:eb-grad-domi}
Suppose that $f$ is gradient-dominated of degree $2$ and $\nabla^2 f(x)$ is uniformly continuous on $\mathcal{N}(\mathcal{X}^*;\gamma)$ for some $\gamma>0$. Then, $f$ satisfies Assumption \ref{ass:eb}.
\end{prop}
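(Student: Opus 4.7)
The plan is to mirror the structure of Proposition~\ref{prop:eb-star}: first reduce $\mathcal{X}$ to $\mathcal{X}^*$, then establish the quadratic growth condition (ii) of Theorem~\ref{thm:eb=qg}, and invoke that theorem to conclude the EB condition. To see that $\mathcal{X}=\mathcal{X}^*$, note $\mathcal{X}^*\subset\mathcal{X}$ is immediate, and for the reverse inclusion take any $x\in\mathcal{X}$: since $\nabla f(x)=0$, the gradient-dominance inequality \eqref{eq:grad-domi} forces $f(x)-f^*\leq 0$, so $x\in\mathcal{X}^*$. This identification gives for free the separation property of Theorem~\ref{thm:eb=qg} (because $f$ is constant on $\mathcal{X}$) and converts the assumed uniform continuity of $\nabla^2 f$ on $\mathcal{N}(\mathcal{X}^*;\gamma)$ into exactly the uniform continuity hypothesis required by that theorem.

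The core of the proof is then to establish the global quadratic growth bound
$$ f(x) - f^* \geq \frac{1}{4\tau_f}\cdot\dist^2(x,\mathcal{X}), \quad \forall x\in\RR^n, $$
which corresponds to condition (ii) of Theorem~\ref{thm:eb=qg}. I would prove this via the classical gradient-flow argument: let $y(\cdot)$ solve the ODE $\dot{y}(t)=-\nabla f(y(t))$ with $y(0)=x$, which exists and is unique because $f\in C^2$. On any interval on which $f(y(t))>f^*$, define $\phi(t):=\sqrt{f(y(t))-f^*}$ and observe, using \eqref{eq:grad-domi}, that
$$ \phi'(t) \;=\; -\frac{\|\nabla f(y(t))\|^2}{2\phi(t)} \;\leq\; -\frac{\|\nabla f(y(t))\|}{2\sqrt{\tau_f}} \;=\; -\frac{\|\dot{y}(t)\|}{2\sqrt{\tau_f}}. $$
Integrating over $[0,T]$ yields $\int_0^T\|\dot{y}(t)\|\,dt \leq 2\sqrt{\tau_f(f(x)-f^*)}$, so the trajectory has finite total length, hence is Cauchy and converges as $T\to\infty$ to some $y^\infty$ with $\nabla f(y^\infty)=0$ and $f(y^\infty)=f^*$; thus $y^\infty\in\mathcal{X}^*=\mathcal{X}$. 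In particular, $\dist(x,\mathcal{X})\leq\|x-y^\infty\|\leq 2\sqrt{\tau_f(f(x)-f^*)}$, which after squaring is the claimed quadratic growth.

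With these three ingredients---$\mathcal{X}=\mathcal{X}^*$, separation of isocost surfaces on $\mathcal{X}$, and quadratic growth---all hypotheses of Theorem~\ref{thm:eb=qg} are in place, and its implication $(ii)\Rightarrow(i)$ gives the local EB condition, establishing Assumption~\ref{ass:eb}. The step I expect to require the most care is the gradient-flow estimate: one must handle the edge case where $f(y(t))$ reaches $f^*$ in finite time (the flow can simply be stopped there, since then $y(t)\in\mathcal{X}$ already) and verify that the finite-length bound precludes escape to infinity, so that the ODE solution does extend globally and converges to a single limit point rather than merely to a cluster set. Once this \L{}ojasiewicz-style trajectory analysis is in place, the rest of the proof is essentially bookkeeping.
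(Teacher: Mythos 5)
Your proposal is correct and follows essentially the same route as the paper: identify $\mathcal{X}=\mathcal{X}^*$ from gradient dominance, run the gradient-flow/\L{}ojasiewicz length estimate to obtain the global quadratic growth bound $f(x)-f^*\geq \frac{1}{4\tau_f}\dist^2(x,\mathcal{X}^*)$, and then invoke Theorem~\ref{thm:eb=qg} (ii)$\Rightarrow$(i). The only cosmetic difference is the finite-time edge case: you stop the flow upon reaching $\mathcal{X}^*$, while the paper rules this out via uniqueness of the ODE solution (a constant solution through a critical point), and it also spells out the short argument that $\nabla f$ vanishes at the limit of the trajectory, which you assert but do not detail.
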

\begin{proof}
Due to \eqref{eq:grad-domi}, one can see that for any $x\notin\mathcal{X}^*$, we have $\nabla f(x)\neq 0$, which immediately implies that $\mathcal{X} \subset \mathcal{X}^*$. This, together with $\mathcal{X}^*\subset\mathcal{X}$, yields $\mathcal{X} = \mathcal{X}^*$. It then follows from the same arguments as those in the proof of Proposition \ref{prop:eb-star} that the premise of Theorem \ref{thm:eb=qg} holds. Our next goal is to prove 
\begin{equation}
\label{eq:quad-grow}
f(x) - f^* \geq \frac{1}{4\tau_f}\cdot\dist^2(x,\mathcal{X}^*), \quad \forall x\in\mathbb{R}^n.
\end{equation} 
Notice that \eqref{eq:quad-grow} holds trivially for $x\in\mathcal{X}^*$. Let $\tilde{x}\in\mathbb{R}^n\setminus\mathcal{X}^*$ be arbitrarily chosen. Consider the differential equation
\begin{equation}
\label{eq:diff-eq}
\left\{
\begin{array}{l}
u(0) = \tilde{x}, \\
\dot{u}(t) = -\nabla f(u(t)), \quad \forall t>0.
\end{array}\right.
\end{equation}
Since $\nabla f$ is continuously differentiable on $\mathbb{R}^n$, it is Lipschitz continuous on any compact subset of $\mathbb{R}^n$. It then follows from the Picard-Lindel\"{o}f Theorem (see, \eg, \cite[Theorem II.1.1]{H02}) that there exists a $\delta>0$ such that \eqref{eq:diff-eq} has a unique solution $u_{\tilde{x}}(t)$ for $t\in[0,\delta]$. Let $[0,\nu)$ be the maximal interval of existence for $u_{\tilde{x}}(t)$, where $\nu\leq \infty$.\footnote{An interval $[0,\nu)$ is called a \emph{maximal interval} of existence for $u_{\tilde{x}}(t)$ if there does not exist an extension $\tilde{u}_{\tilde{x}}(t)$ of $u_{\tilde{x}}(t)$ over an interval $[0,\tilde{\nu})$ such that $\tilde{u}_{\tilde{x}}(t)$ remains a solution to \eqref{eq:diff-eq} and $\tilde{\nu}>\nu$; see, \eg, \cite[p.~12]{H02}.}  
 Define $H(t) := f(u_{\tilde{x}}(t)) - f^*$ for $t\in[0,\nu)$. Then, we have
\begin{equation}
\label{eq:neg-deri}
\dot{H}(t) = \langle \nabla f(u_{\tilde{x}}(t)), \dot{u}_{\tilde{x}}(t) \rangle = -\|\nabla f(u_{\tilde{x}}(t))\| \cdot \|\dot{u}_{\tilde{x}}(t)\|, \quad \forall t\in[0,\nu),
\end{equation} 
where the second equality is due to \eqref{eq:diff-eq}. Using~\eqref{eq:grad-domi} and the definition of $H$, we get
\begin{equation*}
\label{eq:neg-deri-1}
\dot{H}(t) \leq -\frac{H(t)^{\frac{1}{2}}}{\sqrt{\tau_f}}\|\dot{u}_{\tilde{x}}(t)\|, \quad \forall t\in[0,\nu).
\end{equation*} 
Recall that $\nabla f(x) = 0$ for any $x\in\mathcal{X}^*$. This implies that there does not exist a $\bar{t}\in[0,\nu)$ such that $u_{\tilde{x}}(\bar{t})\in\mathcal{X}^*$, for otherwise $u_{\tilde{x}} \equiv u_{\tilde{x}}(\bar{t})$ is the unique solution to \eqref{eq:diff-eq}, which contradicts with $u_{\tilde{x}}(0) = \tilde{x}\notin\mathcal{X}^*$.
Hence, $H(t)> 0$ for all $t\in[0,\nu)$ and 
\begin{equation}
\label{eq:ub-ut}
\|\dot{u}_{\tilde{x}}(t)\|\leq - \sqrt{\tau_f}\frac{\dot{H}(t)}{H(t)^{\frac{1}{2}}}  = -2\sqrt{\tau_f}\left[H(t)^{\frac{1}{2}}\right]^\prime.
\end{equation} 
Then, for any $0\leq s_1<s_2<\nu$, we have
\begin{align}
\|u_{\tilde{x}}(s_2) - u_{\tilde{x}}(s_1)\| & = \left\|\int_{s_1}^{s_2} \dot{u}_{\tilde{x}}(t)dt \right\| \leq \int_{s_1}^{s_2}\|\dot{u}_{\tilde{x}}(t)\|dt \nonumber \\
& \leq \int_{s_1}^{s_2} - 2\sqrt{\tau_f}\left[H(t)^{\frac{1}{2}}\right]^\prime dt \nonumber \\ 
& = 2\sqrt{\tau_f}\left[H(s_1)^{\frac{1}{2}} - H(s_2)^{\frac{1}{2}}\right] \label{eq:ub-fun}.
\end{align}
Substituting $s_1 = 0$ in \eqref{eq:ub-fun} and using $u_{\tilde{x}}(0) = \tilde{x}$ and $H(t)>0$ for any $t\in[0,\nu)$, we obtain 
\begin{equation}
\label{eq:limit-bd}
\|u_{\tilde{x}}(s_2) - \tilde{x}\|\leq 2\sqrt{\tau_f}H(0)^{\frac{1}{2}} = 2\sqrt{\tau_f}\left(f(\tilde{x}) - f^*\right)^{\frac{1}{2}}, \quad \forall s_2\in[0,\nu).
\end{equation}
Next, we claim that $\nu = \infty$. Suppose to the contrary that $\nu<\infty$.  Then, it follows from \cite[Corollary II.3.2]{H02} that $\|u_{\tilde{x}}(t)\|\rightarrow\infty$ as $t\nearrow\nu$. However, the above inequality implies that
$$ \|u_{\tilde{x}}(t)\| \leq \|\tilde{x}\| + \|u_{\tilde{x}}(t) - \tilde{x}\|\leq \|\tilde{x}\| + 2\sqrt{\tau_f}\left(f(\tilde{x}) - f^*\right)^{\frac{1}{2}}, \quad \forall t\in[0,\nu),$$
which yields a contradiction. Hence, the claim $\nu = \infty$ is true. In addition, we have $\dot{H}(t)\leq 0$ for all $t\in [0,\infty)$ from~\eqref{eq:neg-deri}, which implies that $H(t)$ is non-increasing on $[0,\infty)$. This, together with $H(t)>0$ for all $t\in[0,\infty)$, implies that $\lim_{t\rightarrow\infty}H(t)$ exists. It then follows from this and \eqref{eq:ub-fun} that $u_{\tilde{x}}(t)$ has the Cauchy property and hence $\lim_{t\rightarrow\infty}u_{\tilde{x}}(t)$ exists. Let $u_{\tilde{x}}(\infty) := \lim_{t\rightarrow\infty}u_{\tilde{x}}(t)$. We claim that $\nabla f(u_{\tilde{x}}(\infty)) = 0$. Indeed, if $\nabla f(u_{\tilde{x}}(\infty)) \neq 0$, then by \eqref{eq:diff-eq}, \eqref{eq:neg-deri}, and the continuity of $\nabla f$, we have
$$ \lim_{t\rightarrow\infty}\dot{H}(t) = -\lim_{t\rightarrow\infty}\|\nabla f(u_{\tilde{x}}(t))\|\cdot\|\dot{u}_{\tilde{x}}(t)\| = -\lim_{t\rightarrow\infty} \|\nabla f(u_{\tilde{x}}(t))\|^2 = -\|\nabla f(u_{\tilde{x}}(\infty))\|^2 < 0, $$
which contradicts with the fact that $\lim_{t\rightarrow\infty}H(t)$ exists. Hence, we have $\nabla f(u_{\tilde{x}}(\infty)) = 0$. This, together with \eqref{eq:grad-domi}, yields $f(u_{\tilde{x}}(\infty)) = f^*$ and hence $u_{\tilde{x}}(\infty)\in\mathcal{X}^*$. By~\eqref{eq:limit-bd}, this gives
$$ \mbox{dist}(\tilde{x},\mathcal{X}^*) \leq \|u_{\tilde{x}}(\infty) - \tilde{x}\| = \lim_{t\rightarrow\infty}\|u_{\tilde{x}}(t) - \tilde{x}\| \leq 2\sqrt{\tau_f}\left(f(\tilde{x}) - f^*\right)^{\frac{1}{2}}, $$
which implies that \eqref{eq:quad-grow} holds for $x = \tilde{x}$. Since $\tilde{x}\in\mathbb{R}^n\setminus\mathcal{X}^*$ is arbitrary, we conclude that \eqref{eq:quad-grow} holds for all $x\in\mathbb{R}^n$. By \eqref{eq:quad-grow} and the fact that $\mathcal{X} = \mathcal{X}^*$, statement (ii) of Theorem \ref{thm:eb=qg} holds. Hence, statement (i) of Theorem \ref{thm:eb=qg} holds as well, which implies that $f$ satisfies Assumption \ref{ass:eb}.
\end{proof}

\section{Quadratic Convergence of the CR Method}\label{sec:LC}
In this section, we establish the quadratic rate of convergence of the CR method under the local EB condition proposed in Section \ref{sec:eb}. 
To proceed, we start with the following consequence of Assumption~\ref{ass:hess-lip}.
\begin{fact}[{\cite[Lemma 1]{NP06}}]
\label{fact:lip-result}
Suppose that Assumption~\ref{ass:hess-lip} holds. Then, for any $x,y\in\mathcal{F}$,
\begin{equation}
\label{eq:lip-result-1}
\|\nabla f(y) - \nabla f(x) - \nabla^2 f(x)(y - x)\| \leq \frac{L}{2}\|y - x\|^2.
\end{equation}
\end{fact}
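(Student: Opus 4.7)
The plan is to apply the fundamental theorem of calculus to the gradient of $f$ along the segment from $x$ to $y$, reducing the claim to a straightforward integral estimate that uses only the Lipschitz continuity of $\nabla^2 f$ on $\mathcal{F}$.

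Concretely, I would first note that since $\mathcal{F}$ is convex and $x,y \in \mathcal{F}$, the segment $\{x + t(y-x) : t \in [0,1]\}$ lies entirely in $\mathcal{F}$, so $\nabla^2 f$ is Lipschitz with constant $L$ along this segment. Next, since $f$ is twice continuously differentiable, the fundamental theorem of calculus applied to the map $t \mapsto \nabla f(x + t(y-x))$ yields
\[
\nabla f(y) - \nabla f(x) = \int_0^1 \nabla^2 f(x + t(y-x))(y-x)\, dt.
\]
Subtracting $\nabla^2 f(x)(y-x) = \int_0^1 \nabla^2 f(x)(y-x)\, dt$ from both sides gives
\[
\nabla f(y) - \nabla f(x) - \nabla^2 f(x)(y-x) = \int_0^1 \bigl[\nabla^2 f(x + t(y-x)) - \nabla^2 f(x)\bigr](y-x)\, dt.
\]

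Taking norms, using the triangle inequality for the Bochner integral, and applying the Lipschitz bound $\|\nabla^2 f(x + t(y-x)) - \nabla^2 f(x)\| \leq L\,t\,\|y-x\|$, the right-hand side is bounded by
\[
\int_0^1 L t\,\|y-x\|^2\, dt = \frac{L}{2}\|y-x\|^2,
\]
which is exactly the claimed inequality. There is no real obstacle here: the only point that requires care is invoking convexity of $\mathcal{F}$ to ensure that the Lipschitz hypothesis \eqref{eq:hess-lip} is applicable at every point of the segment of integration. The argument is the standard one for converting a Hessian Lipschitz bound into a second-order Taylor-remainder bound on the gradient, and mirrors the proof given in Nesterov and Polyak's original paper.
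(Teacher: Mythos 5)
Your proof is correct and is exactly the standard argument used in the cited source (Nesterov and Polyak's Lemma 1), which the paper itself does not reprove: the fundamental theorem of calculus along the segment, convexity of $\mathcal{F}$ to keep the segment inside the Lipschitz region, and the integral estimate $\int_0^1 Lt\,\|y-x\|^2\,dt = \tfrac{L}{2}\|y-x\|^2$. Nothing is missing.
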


We next prove the following intermediate lemma.

\begin{lem}
\label{lem:step-bd}
Suppose that Assumption~\ref{ass:hess-lip} holds. Let $x\in\mathcal{F}$ and $\hat{x}$ be a projection point of $x$ to $\mathcal{X}$. If $\hat{x}\in\mathcal{F}$, then for any $\sigma>0$, we have
\begin{equation}
\label{eq:step-bd}
\|p_\sigma(x) - x\| \leq \left(1+\frac{L}{\sigma} + \sqrt{\left(1 + \frac{L}{\sigma}\right)^2 + \frac{L}{\sigma}}\right)\cdot\dist(x,\mathcal{X}).
\end{equation}
\end{lem}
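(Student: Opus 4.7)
The plan is to exploit the first- and second-order optimality of $p_\sigma(x)$ for the cubic subproblem, combined with the Hessian Lipschitzness (Fact~\ref{fact:lip-result}), in order to derive a quadratic inequality in $\|d\|:=\|p_\sigma(x)-x\|$. Throughout I write $r := \hat x - x$ (so $\|r\| = \dist(x,\mathcal{X})$), $H := \nabla^2 f(x)$, and $M := H + \tfrac{\sigma}{2}\|d\|I$. The first-order optimality condition for $p_\sigma(x)$ as the minimizer of $f_\sigma(\cdot;x)$ reads $Md = -\nabla f(x)$, and the known characterization of the global minimizer of the cubic model~\cite{NP06} guarantees the key positive semidefiniteness $M \succeq 0$.

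The decisive step is to evaluate $M$ on the vector $d - r$:
\[
M(d - r) \;=\; -\bigl[\nabla f(x) + Hr\bigr] \;-\; \tfrac{\sigma}{2}\|d\|\,r.
\]
Because $\nabla f(\hat x) = 0$, applying Fact~\ref{fact:lip-result} with $y=\hat x$ yields $\|\nabla f(x) + Hr\| \le \tfrac{L}{2}\|r\|^2$, and hence $\|M(d-r)\| \le \tfrac{L}{2}\|r\|^2 + \tfrac{\sigma}{2}\|d\|\|r\|$. For a matching lower bound, $\nabla^2 f(\hat x)\succeq 0$ and the Hessian Lipschitzness give $\lambda_{\min}(H)\ge -L\|r\|$, so $\lambda_{\min}(M)\ge \tfrac{\sigma}{2}\|d\| - L\|r\|$. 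In the regime $\|d\|\ge\max\{\|r\|,\,2L\|r\|/\sigma\}$, both $\lambda_{\min}(M)\ge 0$ and $\|d-r\|\ge\|d\|-\|r\|\ge 0$ hold, so $M\succeq \lambda_{\min}(M)I$ produces $\|M(d-r)\|\ge \bigl(\tfrac{\sigma}{2}\|d\|-L\|r\|\bigr)(\|d\|-\|r\|)$. Combining the two bounds and expanding yields a quadratic inequality of the form $\sigma\|d\|^2 \le 2(\sigma+L)\|d\|\|r\| + L\|r\|^2$; solving in $\|d\|$ delivers the positive-root expression on the right-hand side of~\eqref{eq:step-bd}. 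The complementary case $\|d\|<\max\{\|r\|,\,2L\|r\|/\sigma\}$ is handled by observing that the right-hand side of~\eqref{eq:step-bd} is always at least $2+2L/\sigma$, so the bound holds trivially.

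The main obstacle is that the Hessian at $\hat x$ carries no a priori operator-norm bound, so any route that passes through $\|\nabla f(x)\|$ or $\|Hd\|$ separately would implicitly drag $\|\nabla^2 f(\hat x)\|$ into the estimate and fail to yield a bound of the claimed form. The plan above sidesteps this by letting $\nabla^2 f(\hat x)$ enter only through (i) the single combination $\nabla f(x)+Hr$, which Fact~\ref{fact:lip-result} controls using $\nabla f(\hat x)=0$ alone, and (ii) the one-sided eigenvalue estimate $\lambda_{\min}(H)\ge -L\|r\|$, which uses the sign information $\nabla^2 f(\hat x)\succeq 0$ but not its magnitude.
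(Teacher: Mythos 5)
Your proposal is correct, and it reaches the paper's bound through the same basic mechanism but with a different decomposition. Both arguments test the first-order optimality condition of the cubic subproblem against the vector $p_\sigma(x)-\hat{x}$ (your $d-r$), exploit $\nabla^2 f(\hat{x})\succeq 0$, invoke Fact~\ref{fact:lip-result}, and end up solving the identical quadratic inequality $\sigma\|d\|^2 \le 2(\sigma+L)\|d\|\,\dist(x,\mathcal{X}) + L\,\dist^2(x,\mathcal{X})$. The difference is where the Hessian is anchored: the paper regroups the stationarity condition around the projection point, working with $\nabla^2 f(\hat{x}) + \tfrac{\sigma}{2}\|d\| I$, so the lower bound $\|(\nabla^2 f(\hat{x}) + \tfrac{\sigma}{2}\|d\| I)(x^+-\hat{x})\| \ge \tfrac{\sigma}{2}\|d\|\,\|x^+-\hat{x}\|$ holds unconditionally and no case analysis is needed; the Lipschitz constant then enters through the extra term $(\nabla^2 f(x)-\nabla^2 f(\hat{x}))(x^+-x)$ and through Fact~\ref{fact:lip-result} expanded at $\hat{x}$. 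You instead keep $M=\nabla^2 f(x)+\tfrac{\sigma}{2}\|d\| I$ at the current point, which forces the eigenvalue perturbation bound $\lambda_{\min}(\nabla^2 f(x))\ge -L\|r\|$ and a two-regime split (the complementary regime being handled trivially, as you note); also, the appeal to the Nesterov--Polyak second-order characterization $M\succeq 0$ is actually superfluous, since in your working regime $\lambda_{\min}(M)\ge \tfrac{\sigma}{2}\|d\|-L\|r\|\ge 0$ already. In short, the paper's anchoring at $\hat{x}$ buys a shorter, case-free argument, while your anchoring at $x$ is equally valid and even yields the marginally sharper intermediate inequality $\sigma\|d\|^2 \le 2(\sigma+L)\|d\|\|r\| - L\|r\|^2$ before you relax it; both deliver exactly the coefficient in \eqref{eq:step-bd}.
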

\begin{proof}
For simplicity, we denote $x^+ := p_\sigma(x)$. By~\eqref{eq:def-p-sigma} and the first-order optimality condition of~\eqref{eq:true_cubic_model}, one has
\begin{equation}
\label{eq:first-opt-subprob}
0 = \nabla f(x) + \nabla^2 f(x)(x^+ - x) + \frac{\sigma}{2}\|x^+ - x\|(x^+ - x).
\end{equation}
Since $\hat{x}\in\mathcal{X}$, we have $\nabla f(\hat{x}) = 0$ and $\nabla^2 f(\hat{x}) \succeq 0$. By~\eqref{eq:first-opt-subprob} and $\nabla f(\hat{x}) = 0$, it is not hard to verify that
$$ 
\begin{aligned}
\left( \nabla^2 f(\hat{x}) + \frac{\sigma \|x^+ - x\|}{2}I_n\right) (x^+ - \hat{x}) &  = \nabla f(\hat{x}) - \nabla f(x) - \nabla^2 f(\hat{x})(\hat{x} - x) \\
& \quad - \frac{\sigma}{2}\|x^+ - x\|(\hat{x} - x) - \left( \nabla^2 f(x) - \nabla^2 f(\hat{x})\right)(x^+ - x). 
\end{aligned}
$$
Since $\nabla^2 f(\hat{x})\succeq 0$, we have
$$ \left\| \left( \nabla^2 f(\hat{x}) + \frac{\sigma \|x^+ - x\|}{2}I_n\right) (x^+ - \hat{x}) \right\| \geq \frac{\sigma}{2}\|x^+ - x\| \cdot \|x^+ - \hat{x}\|.$$
This, together with the above equality, yields
$$
\begin{aligned}
\frac{\sigma}{2}\|x^+ - x\| \cdot \|x^+ - \hat{x}\| & \leq \|\nabla f(\hat{x}) - \nabla f(x) - \nabla^2 f(\hat{x})(\hat{x} - x)\| + \frac{\sigma}{2}\|x^+ - x\| \cdot \|x - \hat{x}\| \\
& \quad + \| \nabla^2 f(x) - \nabla^2 f(\hat{x}) \| \cdot \|x^+ - x\|\\
& \leq \frac{L}{2}\|x - \hat{x}\|^2 + \left(\frac{\sigma}{2}+L\right)\|x^+ - x\| \cdot \|x - \hat{x}\|,
\end{aligned}
$$
where the second inequality is due to Fact~\ref{fact:lip-result} and the assumption that $\hat{x}\in\mathcal{F}$. Using the triangle inequality $\|x^+ - \hat{x}\| \geq \|x^+ - x\| - \|x - \hat{x}\|$, we further obtain
$$ \frac{\sigma}{2}\|x^+ - x\|^2 \leq \frac{L}{2}\|x - \hat{x}\|^2 + (\sigma + L)\|x^+ - x\|\cdot\|x - \hat{x}\|. $$
By solving the above   quadratic inequality, one has
$$ \|x^+ - x\| \leq \left(1+\frac{L}{\sigma} + \sqrt{\left(1 + \frac{L}{\sigma}\right)^2 + \frac{L}{\sigma}}\right) \cdot \|x - \hat{x}\|.$$
Noticing that $x^+ = p_{\sigma}(x)$ and $\mbox{dist}(x,\mathcal{X}) = \|x - \hat{x}\|$, we obtain the desired inequality~\eqref{eq:step-bd}.
\end{proof}

\smallskip
{\bf Remark.} As we shall see in the sequel, Lemma \ref{lem:step-bd} implies that there exists a $c_1>0$ such that 
\begin{equation}
\label{eq:conse-bd}
\|x^{k+1} - x^k\| \leq c_1\cdot\dist(x^k,\mathcal{X})
\end{equation}
for all sufficiently large $k$, where $\{x^k\}_{k\ge0}$ is the sequence generated by Algorithm \ref{alg:CR}. It is known that establishing \eqref{eq:conse-bd} is an important step for analyzing local convergence of Newton-type methods with non-isolated solutions. However, our proof of Lemma \ref{lem:step-bd} is novel. Indeed, in most cases \eqref{eq:conse-bd} is obtained based on the property that $x^{k+1}$ is the minimizer of a strongly convex function (see, \eg, \cite{LFQY04,YZS16,YF01,FY05,BM14}), which does not apply to the CR method. Moreover, in our proof of Lemma \ref{lem:step-bd}, the fact that $\nabla^2 f(x)\succeq 0$ for any $x\in\mathcal{X}$ plays a crucial role.

\smallskip
Now we are ready to present the main result of this section.

\begin{thm}
\label{thm:CR-LC-exact}
Suppose that Assumptions~\ref{ass:hess-lip} and \ref{ass:eb} hold. Let $\{x^k\}_{k\ge0}$ be the sequence of iterates generated by the CR method. If $\mathcal{L}(f(x^k))$ is bounded for some $k\geq 0$, then the whole sequence $\{x^k\}_{k\ge0}$ converges at least Q-quadratically to a point $x^*\in\mathcal{X}$.
\end{thm}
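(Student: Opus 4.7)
The plan is to combine Fact \ref{fact:CR_GC} with Lemma \ref{lem:step-bd} and the EB condition (Assumption \ref{ass:eb}) to drive $\dist(x^k,\mathcal{X})$ to zero Q-quadratically, and then upgrade this to Q-quadratic convergence of the iterates themselves. First I would use boundedness of $\mathcal{L}(f(x^k))$ together with Fact \ref{fact:CR_GC} to argue that $\dist(x^k,\mathcal{X})\to 0$: if not, a subsequence $\{x^{k_j}\}$ would stay at distance at least some $\epsilon>0$ from $\mathcal{X}$ while still being contained in the bounded level set, hence admitting a further accumulation point, which by Fact \ref{fact:CR_GC}(iii) must lie in $\mathcal{X}$ — a contradiction. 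This together with Assumption \ref{ass:hess-lip} and the boundedness of the level set lets me arrange that for all sufficiently large $k$, both $x^k$ and its projection $\hat{x}^k$ lie in $\mathcal{F}$ and satisfy $\dist(x^k,\mathcal{X})\le\rho$, so that both Lemma \ref{lem:step-bd} and the EB bound \eqref{eq:eb} are applicable.

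Next I would derive the two key one-step inequalities. From Lemma \ref{lem:step-bd} and $\sigma_k\in[\bar\sigma,2L]$, there is an absolute constant $c_1>0$ with
\begin{equation*}
\|x^{k+1}-x^k\|\le c_1\cdot\dist(x^k,\mathcal{X}).
\end{equation*}
On the other hand, the first-order optimality condition \eqref{eq:first-opt-subprob} for the cubic subproblem gives $\nabla f(x^{k+1})=\bigl[\nabla f(x^{k+1})-\nabla f(x^k)-\nabla^2 f(x^k)(x^{k+1}-x^k)\bigr]-\tfrac{\sigma_k}{2}\|x^{k+1}-x^k\|(x^{k+1}-x^k)$, so Fact \ref{fact:lip-result} and $\sigma_k\le 2L$ yield $\|\nabla f(x^{k+1})\|\le\tfrac{3L}{2}\|x^{k+1}-x^k\|^2$. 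Combining this with the EB condition \eqref{eq:eb} at $x^{k+1}$ and with the Lemma \ref{lem:step-bd} bound, I obtain
\begin{equation*}
\dist(x^{k+1},\mathcal{X})\le \kappa\|\nabla f(x^{k+1})\|\le \tfrac{3L\kappa}{2}\|x^{k+1}-x^k\|^2\le \tfrac{3L\kappa c_1^2}{2}\cdot\dist(x^k,\mathcal{X})^2,
\end{equation*}
which is Q-quadratic decrease of the distance to $\mathcal{X}$.

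Finally I would upgrade distance convergence to iterate convergence. Once $\dist(x^k,\mathcal{X})$ enters a regime where the quadratic recursion forces $\dist(x^{k+j},\mathcal{X})\le (1/2)^{2^j-1}\dist(x^k,\mathcal{X})$, the bound $\|x^{k+1}-x^k\|\le c_1\dist(x^k,\mathcal{X})$ makes $\sum_k\|x^{k+1}-x^k\|$ convergent, so $\{x^k\}$ is Cauchy and converges to some $x^*$; by Fact \ref{fact:CR_GC}(iii), $x^*\in\mathcal{X}$. Telescoping then gives $\|x^k-x^*\|\le 2c_1\cdot\dist(x^k,\mathcal{X})$ for large $k$, while $\dist(x^k,\mathcal{X})\le\|x^k-x^*\|$ is trivial. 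Substituting these into the quadratic distance recursion produces
\begin{equation*}
\|x^{k+1}-x^*\|\le 2c_1\cdot\dist(x^{k+1},\mathcal{X})\le 3L\kappa c_1^3\cdot\dist(x^k,\mathcal{X})^2\le 3L\kappa c_1^3\cdot\|x^k-x^*\|^2,
\end{equation*}
which is exactly Q-quadratic convergence of the iterates.

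The main obstacle I anticipate is the non-convexity of the cubic model: the usual strongly convex structure exploited in \cite{LFQY04,YZS16,YF01,FY05,BM14} is unavailable, so the crucial bound $\|x^{k+1}-x^k\|\le c_1\dist(x^k,\mathcal{X})$ cannot be obtained in the standard way. Here I rely entirely on the positive semidefiniteness of $\nabla^2 f(\hat{x}^k)$ — built into $\mathcal{X}$ being the set of second-order critical points — to control $\|(\nabla^2 f(\hat{x}^k)+\tfrac{\sigma}{2}\|x^{k+1}-x^k\|I)(x^{k+1}-\hat{x}^k)\|$ from below, which is precisely what Lemma \ref{lem:step-bd} delivers. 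Verifying that the neighborhood regime where Lemma \ref{lem:step-bd} applies is eventually reached (via Fact \ref{fact:CR_GC} and the level-set argument above) is the other delicate point.
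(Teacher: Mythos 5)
Your proposal is correct and follows essentially the same route as the paper's proof: use Fact \ref{fact:CR_GC} to get $\dist(x^k,\mathcal{X})\to 0$ and eventually place $x^k,\hat{x}^k$ in $\mathcal{F}$, combine Lemma \ref{lem:step-bd}, the subproblem optimality condition with Fact \ref{fact:lip-result}, and the EB condition to obtain the quadratic recursion $\dist(x^{k+1},\mathcal{X})\le c_2\,\dist^2(x^k,\mathcal{X})$, then telescope the step bound to prove the sequence is Cauchy and conclude Q-quadratic convergence to a limit in $\mathcal{X}$. Even your final constant $3L\kappa c_1^3$ matches the paper's $2c_1c_2$ with $c_2=\tfrac{3}{2}\kappa c_1^2 L$.
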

\begin{proof}
Let $\hat{x}^k$ be a projection point of $x^k$ to $\mathcal{X}$; \ie, $ \hat{x}^k \in \Argmin_{z\in\mathcal{X}}\|z - x^k\|$. Let $\bar{\mathcal{X}}$ be the set of accumulation points of $\{x^k\}_{k\ge0}$. By~\eqref{eq:suff-decrease-copy}, we have $f(x^{k+1})\leq \bar{f}_{\sigma_k}(x^k) \leq f(x^k)$ for all $k$. This, together with the boundedness of $\mathcal{L}(f(x^k))$ for some $k\geq 0$, implies the boundedness of $\{x^k\}_{k\ge0}$. Hence, $\bar{\mathcal{X}}$ is non-empty and bounded, and we have $\lim_{k\rightarrow\infty}\mbox{dist}(x^k,\bar{\mathcal{X}}) = 0$. By Fact~\ref{fact:CR_GC} (iii), we have $\bar{\mathcal{X}} \subset \mathcal{X}$. Thus, $\lim_{k\rightarrow\infty}\mbox{dist}(x^k,\mathcal{X}) = \lim_{k\rightarrow\infty}\|x^k - \hat{x}^k\| = 0$. It then follows from Assumption~\ref{ass:eb} that there exists a $k_1\geq 0$ such that
$$
\mbox{dist}(x^k, \mathcal{X}) \leq \kappa\|\nabla f(x^k)\|, \quad \forall k\geq k_1.
$$
In addition, since $x^k\in \mathcal{L}(f(x^0)) \subset\mbox{int}(\mathcal{F})$ for all $k\geq 0$ and $\{x^k\}_{k\ge0}$ is bounded, there exists a compact set $\mathcal{M} \subset {\rm int}(\mathcal{F})$ such that $\{x^k\}_{k\ge0} \subset \mathcal{M}$. Also, it follows from $\{x^k\}_{k\geq 0}\subset\mathcal{M}$ and $\lim_{k\rightarrow\infty}\|x^k - \hat{x}^k\| = 0$ that $\lim_{k\rightarrow\infty}\mbox{dist}(\hat{x}^k,\mathcal{M}) = 0$. This, together with $\mathcal{M}\subset\mbox{int}(\mathcal{F})$ and the compactness of $\mathcal{M}$, implies that $\hat{x}^k\in\mbox{int}(\mathcal{F})$ for all sufficiently large $k$. Hence, there exists a $k_2\geq 0$ such that
\begin{equation}
\label{eq:iterates-in-F}
x^k \in \mathcal{F}, \quad \hat{x}^k\in\mathcal{F}, \quad \forall k\geq k_2.
\end{equation}
Hence, for any $k\geq \bar{k} := \max\{k_1,k_2\}$, we have
\begin{equation}
\label{eq:derive-quad-conv}
\begin{aligned}
& \mbox{dist}(x^{k+1}, \mathcal{X}) \leq \kappa\|\nabla f(x^{k+1})\| \\
& \qquad = \kappa\left\|\nabla f(x^{k+1}) - \nabla f(x^k) - \nabla^2 f(x^k)(x^{k+1} - x^k) - \frac{\sigma_k}{2}\|x^{k+1} - x^k\|(x^{k+1} - x^k)\right\| \\
& \qquad \leq \kappa \|\nabla f(x^{k+1}) - \nabla f(x^k) - \nabla^2 f(x^k)(x^{k+1} - x^k)\| + \frac{\kappa\sigma_k}{2}\|x^{k+1} - x^k\|^2 \\
& \qquad \leq \frac{\kappa(L + \sigma_k)}{2}\|x^{k+1} - x^k\|^2 \leq \frac{3}{2}\kappa L\|x^{k+1} - x^k\|^2,
\end{aligned}
\end{equation}
where the equality is due to the first-order optimality condition of \eqref{eq:true_cubic_model}, the third inequality is by~\eqref{eq:iterates-in-F} and Fact~\ref{fact:lip-result}, and the last inequality is by $\sigma_k\leq 2L$ for all $k\geq 0$. Using~\eqref{eq:iterates-in-F}, Lemma~\ref{lem:step-bd}, and $\sigma_k\geq \bar{\sigma}>0$ for all $k$, we get
\begin{equation}
\label{eq:step-bd-all}
\|x^{k+1} - x^k\| \leq c_1 \cdot \mbox{dist}(x^k,\mathcal{X}), \quad \forall k\geq \bar{k},
\end{equation}
where $c_1 =\left(1+\frac{L}{\bar{\sigma}} + \sqrt{\left(1 + \frac{L}{\bar{\sigma}}\right)^2 + \frac{L}{\bar{\sigma}}}\right)$. Combining~\eqref{eq:derive-quad-conv} and~\eqref{eq:step-bd-all}, we obtain 
\begin{equation}
\label{eq:dist-quad-conv}
\mbox{dist}(x^{k+1},\mathcal{X}) \leq c_2\cdot \mbox{dist}^2(x^k,\mathcal{X}), \quad \forall k\geq \bar{k},
\end{equation}
where $c_2 = \frac{3}{2}\kappa c_1^2L$. We next show that the whole sequence $\{x^k\}_{k\ge0}$ is convergent. Let $\eta>0$ be arbitrary. Since $\lim_{k\rightarrow \infty}\mbox{dist}(x^k,\mathcal{X}) = 0$, there exists a $k_3\geq 0$ such that 
$$\mbox{dist}(x^k,\mathcal{X})\leq \min\left\{ \frac{1}{2c_2}, \frac{\eta}{2c_1}\right\}, \quad \forall k\geq k_3. $$
It then follows from~\eqref{eq:dist-quad-conv} that
$$ \mbox{dist}(x^{k+1},\mathcal{X}) \leq c_2\cdot\mbox{dist}^2(x^k,\mathcal{X}) \leq \frac{1}{2}\mbox{dist}(x^k,\mathcal{X}), \quad \forall k\geq \max\{k_3,\bar{k}\}. $$
This, together with~\eqref{eq:step-bd-all}, implies that for any $k\geq \max\{k_3,\bar{k}\}$ and any $j\geq 0$, we have
\begin{equation}
\label{eq:derive-seq-conv}
\begin{aligned}
\|x^{k+j} - x^k\| & \leq \sum_{i=k}^{\infty}\|x^{i+1} - x^i\| \leq \sum_{i=k}^{\infty}c_1\cdot\mbox{dist}(x^i,\mathcal{X}) \\
& \leq c_1\cdot\mbox{dist}(x^k,\mathcal{X}) \cdot \sum_{i=0}^\infty \frac{1}{2^i} \leq 2c_1\cdot\mbox{dist}(x^k,\mathcal{X}) \leq \eta,
\end{aligned}
\end{equation} 
which implies that $\{x^k\}_{k\ge\max\{k_3,\bar{k}\}}$ is a Cauchy sequence. Therefore, the whole sequence $\{x^k\}_{k\ge0}$ is convergent. Finally, we study the convergence rate of $\{x^k\}_{k\ge0}$. Let $x^*:=\lim_{k\rightarrow\infty}x^k$. By Fact \ref{fact:CR_GC}, we have $x^*\in\mathcal{X}$. It follows from \eqref{eq:dist-quad-conv} and \eqref{eq:derive-seq-conv} that for any $k\geq \max\{k_3,\bar{k}\}$, 
$$ \|x^* - x^{k+1}\| = \lim_{j\rightarrow\infty}\|x^{k+1+j} - x^{k+1}\| \leq 2c_1\cdot\mbox{dist}(x^{k+1},\mathcal{X}) \leq 2c_1c_2\cdot\mbox{dist}^2(x^k,\mathcal{X}).$$
Combining this with $\mbox{dist}(x^k,\mathcal{X})\leq \|x^k - x^*\|$, we obtain
$$ \frac{\|x^{k+1} - x^*\|}{\|x^k - x^*\|^2} \leq 2c_1c_2, \quad \forall k\geq \max\{k_3,\bar{k}\}. $$
Therefore, $\{x^k\}_{k\ge0}$ converges at least Q-quadratically to an element $x^*$ in $\mathcal{X}$. 
\end{proof}

\smallskip
Equipped with Theorem \ref{thm:CR-LC-exact} and Propositions \ref{prop:eb-star} and \ref{prop:eb-grad-domi}, we can improve the results in \cite{NP06} on the local convergence rate of the CR method when applied to globally non-degenerate star-convex functions and gradient-dominated functions.
\begin{coro}
\label{cor:quad-conv-star-cvx}
Suppose that Assumption \ref{ass:hess-lip} holds, $f$ is star-convex, and $\mathcal{X}^*$ is globally non-degenerate. If $\mathcal{L}(f(x^k))$ is bounded for some $k\geq 0$, then the whole sequence $\{x^k\}_{k\ge0}$ converges at least Q-quadratically to a point $x^*\in\mathcal{X}^*$.
\end{coro}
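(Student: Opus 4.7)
The plan is to assemble the corollary directly from the two earlier results already in hand: Proposition~\ref{prop:eb-star} converts the star-convexity and global non-degeneracy assumptions into the local error bound condition (Assumption~\ref{ass:eb}), and Theorem~\ref{thm:CR-LC-exact} then yields the Q-quadratic convergence. So the argument is a verification and assembly task rather than a new proof.

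First I would check that the uniform continuity hypothesis of Proposition~\ref{prop:eb-star} is met in the current setting. Because $\mathcal{L}(f(x^k))$ is bounded and $\{f(x^k)\}_{k\ge 0}$ is non-increasing by~\eqref{eq:suff-decrease-copy}, the whole iterate sequence lies in a compact subset of $\mathrm{int}(\mathcal{F})$, so every accumulation point of $\{x^k\}_{k\ge 0}$ lies in $\mathcal{X}\cap\mathrm{int}(\mathcal{F})$. The same calculation used in the first paragraph of the proof of Proposition~\ref{prop:eb-star} shows $\mathcal{X}=\mathcal{X}^*$ for star-convex $f$, so these accumulation points lie in $\mathcal{X}^*$. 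Since $\nabla^2 f$ is $L$-Lipschitz on $\mathcal{F}$, it is uniformly continuous on some closed neighborhood $\mathcal{N}(\mathcal{X}^*;\gamma)\cap\mathcal{F}$ with $\gamma>0$ small enough that the relevant portion of $\mathcal{X}^*$ (the one accessed by the iterates) sits inside $\mathcal{F}$, which is exactly what the proof of Proposition~\ref{prop:eb-star} needs.

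With that, Proposition~\ref{prop:eb-star} gives Assumption~\ref{ass:eb}, and combining it with Assumption~\ref{ass:hess-lip} (hypothesized in the corollary) and the boundedness of $\mathcal{L}(f(x^k))$ makes all hypotheses of Theorem~\ref{thm:CR-LC-exact} available. Invoking Theorem~\ref{thm:CR-LC-exact} yields Q-quadratic convergence of the whole sequence $\{x^k\}_{k\ge 0}$ to some $x^*\in\mathcal{X}$. Since $\mathcal{X}=\mathcal{X}^*$ was already established, the limit satisfies $x^*\in\mathcal{X}^*$, which is the conclusion sought.

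The only delicate point, and the one I would spend any real care on, is the uniform continuity bookkeeping: Proposition~\ref{prop:eb-star} as stated asks for uniform continuity on a full neighborhood of $\mathcal{X}^*$, whereas Assumption~\ref{ass:hess-lip} provides Lipschitz behavior only on $\mathcal{F}$. The resolution is that the conclusions of both Theorem~\ref{thm:eb=qg} and Theorem~\ref{thm:CR-LC-exact} are local, depending only on values of $x$ near the accumulation set $\bar{\mathcal{X}}\subset\mathcal{X}^*\cap\mathrm{int}(\mathcal{F})$; hence the ``localized'' uniform continuity obtained above is all that is actually used, and no global assumption on $\mathcal{X}^*$ is needed.
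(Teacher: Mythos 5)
Your overall assembly---Proposition~\ref{prop:eb-star} to obtain Assumption~\ref{ass:eb} together with $\mathcal{X}=\mathcal{X}^*$, then Theorem~\ref{thm:CR-LC-exact} to conclude---is exactly the paper's route. The problem is how you discharge the uniform-continuity premise of Proposition~\ref{prop:eb-star}. You only establish uniform continuity of $\nabla^2 f$ near ``the relevant portion of $\mathcal{X}^*$ accessed by the iterates'' and then claim this suffices because the conclusions of Theorems~\ref{thm:eb=qg} and~\ref{thm:CR-LC-exact} are ``local.'' As written, that does not entitle you to invoke either result: Proposition~\ref{prop:eb-star} requires uniform continuity on a full neighborhood $\mathcal{N}(\mathcal{X}^*;\gamma)$, and Theorem~\ref{thm:CR-LC-exact} consumes Assumption~\ref{ass:eb}, i.e.\ an error bound valid on a full neighborhood $\mathcal{N}(\mathcal{X};\rho)$ of the entire set $\mathcal{X}$. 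To get away with localized hypotheses you would have to reprove localized variants of Theorem~\ref{thm:eb=qg}, Proposition~\ref{prop:eb-star} and the convergence theorem (checking, for instance, that the projections $\hat{x}^k$ used in the proof of Theorem~\ref{thm:CR-LC-exact} stay near the accumulation set), none of which you carry out; so the ``delicate point'' you flag is left genuinely open in your write-up.

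The fix is much simpler than your localization and is what the paper does: every global minimizer satisfies $f(x^*)\le f(x^k)$, so $\mathcal{X}^*\subset\mathcal{L}(f(x^k))$ and the assumed boundedness of $\mathcal{L}(f(x^k))$ makes $\mathcal{X}^*$ bounded, hence compact; moreover $\mathcal{X}^*\subset\mathcal{L}(f(x^0))\subset{\rm int}(\mathcal{F})$. Therefore $\nabla^2 f$ is uniformly continuous on a genuine full neighborhood $\mathcal{N}(\mathcal{X}^*;\gamma)$ for some $\gamma>0$---either because continuity of $\nabla^2 f$ on the compact set $\mathcal{N}(\mathcal{X}^*;\gamma)$ already gives uniform continuity, or by shrinking $\gamma$ so that $\mathcal{N}(\mathcal{X}^*;\gamma)\subset\mathcal{F}$ and using the Lipschitz property from Assumption~\ref{ass:hess-lip}. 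With the premise of Proposition~\ref{prop:eb-star} verified in full (no restriction to a ``portion'' of $\mathcal{X}^*$ is needed), the remainder of your argument goes through verbatim.
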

\begin{proof}
Since $\mathcal{L}(f(x^k))$ is bounded for some $k\geq 0$, we have that $\mathcal{X}^*$ is bounded. This, together with Assumption \ref{ass:hess-lip}, implies that $\nabla^2 f(x)$ is uniformly continuous on $\mathcal{N}(\mathcal{X}^*;\gamma)$ for some $\gamma>0$. The premise of Proposition \ref{prop:eb-star} then holds. Hence, by Proposition \ref{prop:eb-star} and its proof, we obtain that Assumption \ref{ass:eb} holds and $\mathcal{X} = \mathcal{X}^*$. The conclusion of Corollary \ref{cor:quad-conv-star-cvx} then follows from Theorem \ref{thm:CR-LC-exact}.
\end{proof}

\begin{coro}
\label{cor:quad-conv-grad-domi}
Suppose that Assumption \ref{ass:hess-lip} holds and $f$ is gradient-dominated with degree $2$. If $\mathcal{L}(f(x^k))$ is bounded for some $k\geq 0$, then the whole sequence $\{x^k\}_{k\ge0}$ converges at least Q-quadratically to a point $x^*\in\mathcal{X}^*$. 
\end{coro}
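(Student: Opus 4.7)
The plan is to mirror the proof of Corollary \ref{cor:quad-conv-star-cvx} almost verbatim, with Proposition \ref{prop:eb-grad-domi} taking the place of Proposition \ref{prop:eb-star}. Concretely, I would reduce Corollary \ref{cor:quad-conv-grad-domi} to Theorem \ref{thm:CR-LC-exact} by showing that, under the stated hypotheses, Assumption \ref{ass:eb} is automatically satisfied and $\mathcal{X} = \mathcal{X}^*$, so that Q-quadratic convergence of $\{x^k\}_{k\ge 0}$ to a second-order critical point actually yields Q-quadratic convergence to a global minimizer.

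First, I would verify boundedness of $\mathcal{X}^*$. Since the CR iterates satisfy $f(x^{k+1}) \le f(x^k)$, any $x^* \in \mathcal{X}^*$ lies in $\mathcal{L}(f(x^k))$, which is bounded by hypothesis; closedness of $\mathcal{X}^*$ (from continuity of $f$) then upgrades this to compactness. Second, I would use Assumption \ref{ass:hess-lip} to produce a $\gamma>0$ such that $\nabla^2 f$ is uniformly continuous on $\mathcal{N}(\mathcal{X}^*;\gamma)$: because $\mathcal{X}^* \subset \mathcal{L}(f(x^0)) \subset \mathrm{int}(\mathcal{F})$ and $\mathcal{X}^*$ is compact, a standard covering argument yields a small $\gamma>0$ with $\mathcal{N}(\mathcal{X}^*;\gamma) \subset \mathcal{F}$, on which the Hessian is Lipschitz, hence uniformly continuous.

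With these two ingredients, the premises of Proposition \ref{prop:eb-grad-domi} are satisfied, so $f$ satisfies Assumption \ref{ass:eb}. Furthermore, inspection of the proof of Proposition \ref{prop:eb-grad-domi} shows that the gradient-dominated inequality \eqref{eq:grad-domi} forces any second-order critical point $x$ to satisfy $\nabla f(x) = 0$ and hence $f(x) = f^*$; combined with the trivial inclusion $\mathcal{X}^* \subset \mathcal{X}$, this gives $\mathcal{X} = \mathcal{X}^*$. Therefore, applying Theorem \ref{thm:CR-LC-exact} yields Q-quadratic convergence of $\{x^k\}_{k\ge 0}$ to some $x^* \in \mathcal{X} = \mathcal{X}^*$, which is the desired conclusion.

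I do not expect any genuine obstacle here: the substantive work was done in Proposition \ref{prop:eb-grad-domi} (namely, deriving the quadratic growth inequality \eqref{eq:quad-grow} via the gradient-flow argument on \eqref{eq:diff-eq}) and in Theorem \ref{thm:CR-LC-exact}. The only minor point to be careful about is justifying the existence of the neighborhood $\mathcal{N}(\mathcal{X}^*;\gamma)$ on which uniform continuity of $\nabla^2 f$ holds, but this is an immediate compactness argument using $\mathcal{L}(f(x^0)) \subset \mathrm{int}(\mathcal{F})$.
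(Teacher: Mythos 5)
Your proposal is correct and follows essentially the same route as the paper: the paper's proof likewise reduces to Theorem \ref{thm:CR-LC-exact} by noting that boundedness of $\mathcal{L}(f(x^k))$ gives boundedness of $\mathcal{X}^*$, that Assumption \ref{ass:hess-lip} then yields uniform continuity of $\nabla^2 f$ on a neighborhood $\mathcal{N}(\mathcal{X}^*;\gamma)$, and that Proposition \ref{prop:eb-grad-domi} together with its proof supplies Assumption \ref{ass:eb} and the identity $\mathcal{X}=\mathcal{X}^*$. The extra details you supply (the covering argument for $\gamma$ and the observation that gradient dominance forces $f(x)=f^*$ at any critical point) are exactly the points the paper leaves implicit.
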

\begin{proof}
Using the same arguments as those in the proof of Corollary \ref{cor:quad-conv-star-cvx}, we have that the premise of Proposition \ref{prop:eb-grad-domi} holds. Hence, by Proposition \ref{prop:eb-grad-domi} and its proof, we obtain that Assumption \ref{ass:eb} holds and $\mathcal{X} = \mathcal{X}^*$. The conclusion of Corollary \ref{cor:quad-conv-grad-domi} then follows from Theorem \ref{thm:CR-LC-exact}.
\end{proof}

\section{Applications to Structured Nonconvex Optimization Problems}\label{sec:app}
In this section, we study the CR method for solving two concrete nonconvex minimization problems that arise in phase retrieval and low-rank matrix recovery, respectively. 
\subsection{Phase Retrieval}\label{sec:PR}
In this subsection, we consider the application of the CR method for solving (noiseless) phase retrieval problems. Specifically, the problem of interest is to recover an unknown complex signal ${z^\star}={x^\star}+i{y^\star}\in\mathbb{C}^{n}$ from the measurements
\begin{equation}\label{eq:PR}
b_j = |a_j^H{z^\star}|,\quad j=1,\dots,m,
\end{equation}
where $\{a_j\}_{j=1}^m\subset \CC^n$ are assumed to be independently sampled from the standard complex Gaussian distribution $\CC\mathcal{N}(0,I_n)$. For non-triviality, we assume that $z^\star \neq 0$. Since for any $\phi\in[0,2\pi)$, $z^\star e^{i\phi}$ provides exactly the same measurements, we can only expect to recover $z^\star$ up to this ambiguity. Such problem has broad applications in science and engineering, including optics, signal processing, computer vision, and quantum mechanics. For more discussions on its applications and recent developments, the readers are invited to the survey papers \cite{SECCMS15,L17}.

Given the form~\eqref{eq:PR}, the following optimization formulation arises naturally:
\begin{equation}\label{opt:PR_complex}
\min_{z\in\mathbb{C}^{n}} f_c(z):= \frac{1}{2m}\sum_{j=1}^m \left( |a_j^Hz|^2 -b_j^2 \right)^2.
\end{equation}
Let $\mathcal{Z}^\star := \{ z^\star e^{i\phi} : \phi \in [0,2\pi)\}$ be the set of target signals. Observe that $f_c(z) \geq 0$ for any $z\in\CC^n$ and $f(z) = 0$ for any $z\in\mathcal{Z}^\star$. Hence, any $z\in \mathcal{Z}^\star$ is a global minimizer of~\eqref{opt:PR_complex}.
By letting $f(x,y) := f_c(x + iy)$, the corresponding real-variable problem of~\eqref{opt:PR_complex} is given by
\begin{equation}\label{opt:PR_real}
\min_{x,y\in\mathbb{R}^{n}} f(x,y) = \frac{1}{2m}\sum_{j=1}^m \left( \left\| 
\begin{pmatrix}
\Re(a_j) & -\Im(a_j) \\
\Im(a_j) & \Re(a_j)
\end{pmatrix}^T \begin{pmatrix}
x\\ y
\end{pmatrix} \right\|^2 -b_j^2 \right)^2.
\end{equation}
Let $\mathcal{X}^\star := \{ (x^\star \cos \phi - y^\star \sin \phi, \; x^\star \sin \phi + y^\star \cos \phi) : \phi \in [0,2\pi) \}$. Using similar arguments, one can verify that any $(x,y)\in\mathcal{X}^\star$ is a global minimizer of~\eqref{opt:PR_real}. Also, it holds that $(x,y)\in\mathcal{X}^\star$ if and only if $x + iy \in \mathcal{Z}^\star$.  Hence, as long as we obtain an element $(x,y)\in \mathcal{X}^\star$, the phase retrieval problem is solved by letting $z = x + iy$. 

We now state the main result of this subsection. 
\begin{thm}
\label{thm:CR-PR}
There exist constants $c_0,c_1>0$ such that when $m\geq c_0 n \log^3 n$, it holds with probability at least $1 - c_1m^{-1}$ that with any arbitrary initialization, the sequence of iterates $\{(x^k, y^k)\}_{k\ge0}$ generated by Algorithm~\ref{alg:CR} for solving~\eqref{opt:PR_real} converges at least Q-quadratically to an element in $\mathcal{X}^\star$. 
\end{thm}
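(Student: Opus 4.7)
The plan is to reduce the phase retrieval statement to an application of Theorem \ref{thm:CR-LC-exact}. That theorem yields Q-quadratic convergence to a point in $\mathcal{X}$ provided: (a) $\nabla^2 f$ is Lipschitz on a closed convex set whose interior contains $\mathcal{L}(f(x^0))$; (b) the local error bound condition of Assumption \ref{ass:eb} holds for the set $\mathcal{X}$ of second-order critical points of $f$; and (c) $\mathcal{L}(f(x^0))$ is bounded. Since the statement allows arbitrary initialization, all three items must be shown to hold deterministically (for the geometric parts) or with probability at least $1 - c_1 m^{-1}$ (for the probabilistic parts), uniformly over any bounded starting level set.

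The first and third items are straightforward. The objective $f(x,y)$ in \eqref{opt:PR_real} is a degree-$4$ polynomial whose leading homogeneous part is $\frac{1}{2m}\sum_{j=1}^m \|B_j(x,y)\|^4$ with $B_j$ the $2n\times 2$ block associated with $a_j$. Since $\sum_j B_j B_j^T$ concentrates around a positive multiple of $I_{2n}$ once $m \gtrsim n$, a standard covering / tail bound argument shows that on the event we work on, this quartic part is positively homogeneous, hence $f$ is coercive and all sublevel sets are bounded. For (a), $\nabla^2 f$ is itself a degree-$2$ polynomial in $(x,y)$, so it is automatically Lipschitz on any bounded closed convex set $\mathcal{F}$; we simply take $\mathcal{F}$ to be a closed ball containing $\mathcal{L}(f(x^0))$ in its interior.

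The main obstacle is (b): establishing the local EB condition $\dist((x,y), \mathcal{X}) \leq \kappa \|\nabla f(x,y)\|$ in a neighborhood of $\mathcal{X}$. The plan is to invoke the probabilistic global landscape analysis of \cite{SQW16}, which shows that whenever $m \geq c_0 n \log^3 n$, with probability at least $1 - c_1 m^{-1}$ the function $f$ has no spurious second-order critical points; that is, $\mathcal{X} = \mathcal{X}^\star$. Furthermore, on the same event, \cite{SQW16} establishes that the Hessian at every $w \in \mathcal{X}^\star$ is positive definite on the normal space to the one-dimensional orbit $\mathcal{X}^\star$, with a uniform positive lower bound on the least normal eigenvalue. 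Combined with the smoothness of the orbit and a Taylor expansion in a tubular neighborhood, this yields a uniform quadratic growth bound
\[
f(x,y) \geq \frac{\alpha}{2} \cdot \dist^2\bigl((x,y), \mathcal{X}^\star\bigr)
\]
for all $(x,y)$ in some $\mathcal{N}(\mathcal{X}^\star;\beta)$, namely condition (ii) of Theorem \ref{thm:eb=qg}.

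It then remains to pass from quadratic growth to the EB condition via Theorem \ref{thm:eb=qg}. Its premises are easy to check here: uniform continuity of $\nabla^2 f$ on a neighborhood of $\mathcal{X}^\star$ follows from the Hessian Lipschitz property verified in (a), while the separation property holds trivially because $f \equiv 0$ on $\mathcal{X}^\star = \mathcal{X}$. Invoking Theorem \ref{thm:eb=qg}, the quadratic growth inequality above yields Assumption \ref{ass:eb}, so all hypotheses of Theorem \ref{thm:CR-LC-exact} are satisfied. The conclusion of that theorem gives Q-quadratic convergence of $\{(x^k,y^k)\}_{k \geq 0}$ to a point in $\mathcal{X} = \mathcal{X}^\star$, which is exactly the claim. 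The only delicate step is confirming that the high-probability estimates of \cite{SQW16} are stated in a form strong enough to produce a \emph{uniform} normal-direction Hessian lower bound on the whole orbit $\mathcal{X}^\star$; if they are only stated pointwise, one must invoke a compactness / continuity argument on $\mathcal{X}^\star$, which is a circle and hence compact, so this causes no real trouble.
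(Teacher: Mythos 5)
Your proposal is correct in outline and reduces the theorem to Theorem \ref{thm:CR-LC-exact} exactly as the paper does (boundedness of level sets via concentration/coercivity, Hessian Lipschitzness on a ball, $\mathcal{X}=\mathcal{X}^\star$ from the no-spurious-critical-point part of \cite{SQW16}), but your treatment of the key EB step differs from the paper's. The paper never passes through Theorem \ref{thm:eb=qg}: it takes the restricted-strong-convexity estimate of \cite{SQW16} — the Hessian quadratic form bounded below along the single direction $z-\hat z$ for all $z$ in a neighborhood of the orbit — and obtains the error bound directly by two Taylor expansions along the segment joining $(x,y)$ to its projection (one expanded at the projection, one at $(x,y)$), summing, and applying Cauchy--Schwarz; this gives explicit constants $\kappa = 4/\|z^\star\|^2$ and radius $\|z^\star\|/\sqrt{7}$ and avoids any discussion of tangent/normal spaces. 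You instead extract a uniform positive lower bound for the Hessian on the normal space \emph{at points of the orbit}, run a Morse--Bott/tubular-neighborhood Taylor argument to get quadratic growth, and then invoke the $(ii)\Rightarrow(i)$ direction of Theorem \ref{thm:eb=qg} (whose extra premises, separation and uniform continuity, are indeed trivial here). This route works, but note one attribution you should tighten: Theorem 2 of \cite{SQW16} (Fact \ref{fact:SQW}(ii) in the paper) is stated along the direction $g(z)=z-\hat z$ at points \emph{near} the orbit and is vacuous on the orbit itself, so the normal-space positive definiteness you want is not literally what is proved there; you recover it by a short limiting argument (take $z=w+tv$ with $v$ normal at $w\in\mathcal{Z}^\star$, note $\hat z=w$ for small $t$, divide by $t^2$ and let $t\downarrow 0$ using continuity of $\nabla_w^2 f_c$), or you can simply use the neighborhood-wide restricted convexity directly, which is what makes the paper's two-sided Taylor argument both shorter and quantitatively explicit. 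With that point repaired, your proof is complete; what it buys is a nice illustration of the quadratic-growth/EB equivalence of Theorem \ref{thm:eb=qg}, at the cost of non-explicit constants and the extra differential-geometric bookkeeping (projection lies in the normal space, uniform remainder control via Hessian Lipschitzness) that the paper's direct computation avoids.
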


The rest of this subsection is devoted to proving the above theorem. Before we proceed, let us lay out the concepts of Wirtinger calculus that are necessary for our developments on complex-variable functions. Let $h_c:\mathbb{C}^{n} \rightarrow\mathbb{R}$ be a real-valued function on $\mathbb{C}^{n}$ and $h:\mathbb{R}^{2n}\rightarrow\mathbb{R}$ be defined as $h(x,y) = h_c(x+iy)$ for any $x,y\in\mathbb{R}^{n}$. We define 
\begin{equation*}
\frac{\partial}{\partial z} := \frac{1}{2}\left(\frac{\partial }{\partial x} - i\frac{\partial}{\partial y}\right)\text{ and }\frac{\partial}{\partial \bar{z}} := \frac{1}{2}\left(\frac{\partial }{\partial x} + i\frac{\partial}{\partial y}\right),
\end{equation*}
which can be understood as operators acting on real-valued functions of $(x,y)$. Then, the \emph{Wirtinger gradient} $\nabla_w h_c$ and \emph{Wirtinger Hessian} $\nabla_w^2 h_c$ of $h_c$ are defined, respectively, as 
\begin{equation*}
\nabla_w h_c := \left[ \frac{\partial h}{\partial z}, \frac{\partial h}{\partial \bar{z}} \right]^H \quad\text{ and }\quad \nabla_w^2 h_c =: \begin{pmatrix}
\frac{\partial}{\partial z}\left(\frac{\partial h}{\partial z}\right)^H & \frac{\partial}{\partial \bar{z}}\left(\frac{\partial h}{\partial z}\right)^H \\
\frac{\partial}{\partial z}\left(\frac{\partial h}{\partial \bar{z}}\right)^H & \frac{\partial}{\partial \bar{z}}\left(\frac{\partial h}{\partial \bar{z}}\right)^H
\end{pmatrix}.
\end{equation*}
Define the matrix 
\begin{equation*}
J = \frac{1}{2}
\begin{pmatrix}
I_n & i I_n \\
I_n & -i I_n
\end{pmatrix},
\end{equation*}
which satisfies $2JJ^H = 2J^HJ = I_{2n}$
and
$$
\nabla_w h_c(x + iy) = J\nabla h(x,y), \quad \nabla_w^2 h_c(x + iy) = J\nabla^2 h(x,y) J^H.
$$
In particular, for the function $f_c$ in~\eqref{opt:PR_complex}, we have
\begin{equation}
\label{eq:wt-grad-PR}
\nabla_w f_c(z) = \frac{1}{m}\sum_{j=1}^m \begin{pmatrix}
\left( |a_j^Hz|^2 - b_j^2 \right)(a_ja_j^H)z\\
\left( |a_j^Hz|^2 - b_j^2 \right)(a_ja_j^H)^T\overline{z}
\end{pmatrix}
\end{equation}
and
\begin{equation}
\label{eq:wt-hess-PR}
\nabla_w^2 f_c(z) = \frac{1}{m} \sum_{j=1}^m \begin{pmatrix}
\left( 2|a_j^Hz|^2 - b_j^2 \right)a_ja_j^H & (a_j^Hz)^2 a_ja_j^T\\
(a_j^T\overline{z})^2 \overline{a_j}a_j^H & \left( 2 |a_j^Hz|^2 - b_j^2 \right)\overline{a_j}a_j^T\end{pmatrix}
\end{equation}
for any $z\in\CC^n$; see, \eg, \cite[Section 7.2]{CLS15}.

\subsubsection{Second-Order Critical Points and Local EB Condition}
We first show that with high probability, the set of second-order critical points of $f$ is $\mathcal{X}^\star$. Moreover, we show that in a neighbourhood of $\mathcal{X}^\star$, the local EB condition~\eqref{eq:eb} holds. For this purpose, we need the following result, which is directly implied by~\cite[Theorem 2]{SQW16}.
\begin{fact}
\label{fact:SQW}
Let $\mathcal{U}_c$ be a neighbourhood of $\mathcal{Z}^\star$ defined as
$\mathcal{U}_c := \big\{z \in \CC^n : \dist\left(z, \mathcal{Z}^\star\right) \leq \frac{1}{\sqrt{7}}\|z^\star\| \big\}. $ There exist constants $c_2,c_3>0$ such that when $m\geq c_2 n \log^3 n$, the following statements hold with probability at least $1 - c_3m^{-1}$.
\begin{enumerate}
\item[(i)] For any $z\notin \mathcal{U}_c$, if $\nabla_w f_c(z) = 0$, then
\begin{equation}
\label{eq:neg-curve}
\begin{pmatrix}
\hat{z} \smallskip\\ \overline{\hat{z}}
\end{pmatrix}^H 
\nabla_w^2 f_c(z) 
\begin{pmatrix}
\hat{z} \smallskip\\ \overline{\hat{z}}
\end{pmatrix} 
\leq -\frac{\|z^\star\|^4}{100},
\end{equation}
where $\hat{z}$ is defined as the unique projection of $z$ to $\mathcal{Z}^\star$; \ie,
$$ \hat{z} = z^\star e^{i\phi(z)}, \;\;\mbox{with}\,\;\phi(z) = \argmin_{\phi\in[0,2\pi)} \left\| z - z^\star e^{i\phi} \right\|. $$
\item[(ii)] For any $z\in \mathcal{U}_c$, it holds that
\begin{equation}
\label{eq:pos-curve}
\begin{pmatrix}
g(z) \smallskip\\ \overline{g(z)}
\end{pmatrix}^H 
\nabla_w^2 f_c(z) 
\begin{pmatrix}
g(z) \smallskip\\ \overline{g(z)}
\end{pmatrix} 
\geq \frac{\|z^\star\|^2}{4}\cdot\|g(z)\|^2,
\end{equation}
where $g(z):= z - \hat{z}$. 
\end{enumerate}
\end{fact}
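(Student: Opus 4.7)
\textbf{Proof plan for Theorem \ref{thm:CR-PR}.} The natural route is to reduce the statement to Theorem \ref{thm:CR-LC-exact}, so the work is to verify, on a high-probability event, its three hypotheses: (a) the Hessian-Lipschitz Assumption \ref{ass:hess-lip}; (b) the local EB Assumption \ref{ass:eb} relative to the second-order critical set $\mathcal{X}$ of $f$; and (c) boundedness of $\mathcal{L}(f(x^0,y^0))$. To upgrade ``converges to a point of $\mathcal{X}$'' into ``converges to a point of $\mathcal{X}^\star$'', I would separately establish $\mathcal{X}=\mathcal{X}^\star$ with high probability.

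Two of these pieces are almost free. Since $f$ is a degree-four polynomial in $(x,y)$, $\nabla^2 f$ is polynomial of degree two and thus Lipschitz on any bounded convex set; taking $\mathcal{F}$ to be a closed ball containing $\mathcal{L}(f(x^0,y^0))$ in its interior yields Assumption \ref{ass:hess-lip}. For (c), with probability $1-o(1)$ the vectors $\{a_j\}$ span $\mathbb{C}^n$, which forces $\min_{\|z\|=1}\max_j |a_j^Hz|^2>0$ and hence $f(z)\to\infty$ as $\|z\|\to\infty$; every sublevel set of $f$ is therefore compact.

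The heart of the proof is to extract both $\mathcal{X}=\mathcal{X}^\star$ and the EB condition from Fact \ref{fact:SQW}, on the high-probability event $\mathcal{E}$ where (i) and (ii) hold. The Wirtinger-to-real conversion $\nabla_w^2 f_c(z)=J\nabla^2 f(x,y)J^H$, together with $2JJ^H=I_{2n}$, makes the real condition $\nabla^2 f(x,y)\succeq 0$ equivalent to $\nabla_w^2 f_c(z)\succeq 0$. Hence Fact \ref{fact:SQW}(i) rules out any $\mathcal{X}$-point outside $\mathcal{U}_c$. For $z\in\mathcal{U}_c$ I would work along the segment $z(s)=\hat{z}+s\,g(z)$, $s\in[0,1]$: because $\|g(z)\|\leq\|z^\star\|/\sqrt{7}$, the segment stays in $\mathcal{U}_c$ with projection $\widehat{z(s)}=\hat{z}$, so $g(z(s))=s\,g(z)$; applying Fact \ref{fact:SQW}(ii) at $z(s)$ and cancelling $s^2$ yields the uniform-in-$s$ lower bound
\begin{equation*}
(g(z),\overline{g(z)})^H \nabla_w^2 f_c(z(s)) (g(z),\overline{g(z)})^T \geq \tfrac{\|z^\star\|^2}{4}\|g(z)\|^2.
\end{equation*}
Integrating this along the segment (using $\nabla_w f_c(\hat{z})=0$) gives in one stroke the quadratic growth $f_c(z)-f_c(\hat{z})\geq \tfrac{\|z^\star\|^2}{8}\|g(z)\|^2$ and, via Cauchy--Schwarz applied to $\langle \nabla_w f_c(z),(g(z),\overline{g(z)})^T\rangle$, a gradient lower bound $\|\nabla_w f_c(z)\|\geq c_\star\,\|g(z)\|$ throughout $\mathcal{U}_c$. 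The latter proves $\mathcal{X}\cap\mathcal{U}_c\subseteq\mathcal{X}^\star$, and, translated back through $J$, supplies Assumption \ref{ass:eb} on a neighbourhood of $\mathcal{X}^\star$.

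With (a)--(c) and $\mathcal{X}=\mathcal{X}^\star$ in hand, Theorem \ref{thm:CR-LC-exact} delivers Q-quadratic convergence of $\{(x^k,y^k)\}$ to an element of $\mathcal{X}^\star$, for any initialization. The main obstacle I anticipate is careful Wirtinger bookkeeping: verifying that $\widehat{z(s)}=\hat{z}$ along the segment (so that Fact \ref{fact:SQW}(ii) applies with the clean scaling $g(z(s))=s\,g(z)$), and consistently translating the Wirtinger gradient inequality into the real-variable statement $\dist((x,y),\mathcal{X})\leq\kappa\|\nabla f(x,y)\|$ with explicit constants.
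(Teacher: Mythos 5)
Your proposal does not prove the assigned statement. What is to be established is Fact~\ref{fact:SQW} itself: that when $m\geq c_2 n\log^3 n$, with probability at least $1-c_3m^{-1}$ the random Wirtinger Hessian \eqref{eq:wt-hess-PR} simultaneously exhibits (a) negative curvature of magnitude $\|z^\star\|^4/100$ along the direction $(\hat z,\overline{\hat z})$ at every critical point outside $\mathcal{U}_c$, and (b) restricted positive curvature $\frac{\|z^\star\|^2}{4}\|g(z)\|^2$ along $(g(z),\overline{g(z)})$ uniformly over $\mathcal{U}_c$. Your plan instead \emph{assumes} this fact (you work ``on the high-probability event $\mathcal{E}$ where (i) and (ii) hold'') and derives Theorem~\ref{thm:CR-PR} from it. That derivation is precisely the content of Proposition~\ref{prop:geometry-f-PR} and the proof of Theorem~\ref{thm:CR-PR} in the paper --- indeed your segment argument $z(s)=\hat z+s\,g(z)$, the invariance $\widehat{z(s)}=\hat z$, the two integrated Taylor expansions, and the Cauchy--Schwarz step reproduce the paper's proof of Proposition~\ref{prop:geometry-f-PR} almost verbatim --- but none of it touches the probabilistic heart of Fact~\ref{fact:SQW}. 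A proof of the Fact would have to compute the expected Hessian and show it has the claimed curvature properties, and then establish \emph{uniform} concentration of the fourth-order Gaussian empirical processes $\frac{1}{m}\sum_{j=1}^m|a_j^Hz|^2a_ja_j^H$ and $\frac{1}{m}\sum_{j=1}^m(a_j^Hz)^2a_ja_j^T$ over the relevant regions of $\CC^n$ (including, for part (i), points far from $\mathcal{Z}^\star$), via truncation and covering arguments for heavy-tailed sums; this is exactly where the $\log^3 n$ oversampling factor and the merely polynomial confidence $1-c_3m^{-1}$ come from, and your plan contains none of it.

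For the record, the paper does not reprove Fact~\ref{fact:SQW} either: it imports it directly from \cite[Theorem 2]{SQW16}, whose landscape analysis of the phase retrieval objective supplies both inequalities. An acceptable blind attempt therefore had two possible shapes: a careful citation-level reduction, verifying that the statements and normalizations in \cite{SQW16} (which concern the same objective $f_c$ and the same neighbourhood radius $\frac{1}{\sqrt{7}}\|z^\star\|$) really yield \eqref{eq:neg-curve} and \eqref{eq:pos-curve} with the stated constants; or a self-contained reproduction of that concentration analysis. Judged as a proof of the downstream Theorem~\ref{thm:CR-PR}, your outline is broadly sound and follows the same route as the paper (minor caveats: the gradient lower bound needs the Taylor expansion of $f_c$ at $z$ toward $\hat z$ in addition to the one at $\hat z$, and your deterministic coercivity argument replaces the paper's use of \eqref{eq:eigenvalue-bd}, both of which are fine). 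But judged as a proof of Fact~\ref{fact:SQW}, it assumes precisely what is to be shown.
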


\begin{prop}
\label{prop:geometry-f-PR}
There exist constants $c_2,c_3>0$ such that when $m\geq c_2 n \log^3 n$, the following statements on $f$ hold with probability at least $1 - c_3m^{-1}$.
\begin{enumerate}
\item[(i)] $\mathcal{X}^\star$ equals the set of second-order critical points of $f$.
\item[(ii)] The following error bound holds:
\begin{equation}
\label{eq:eb-PR}
\dist\big( (x,y), \mathcal{X}^\star \big) \leq \frac{4}{\|z^\star\|^2} \|\nabla f(x,y) \| \quad \mbox{whenever}\;\; \dist\big((x,y), \mathcal{X}^\star\big) \leq \frac{1}{\sqrt{7}}\|z^\star\|.
\end{equation}
\end{enumerate}
\end{prop}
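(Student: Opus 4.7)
The plan is to translate Fact~\ref{fact:SQW}, which is stated in Wirtinger calculus, into bounds on the ordinary Hessian of $f$, and then run a Taylor expansion argument along the line segment connecting a point to its projection onto $\mathcal{X}^\star$. The two identities I will use, both consequences of $2J^HJ=I_{2n}$, are $\nabla f(x,y)=2J^H\nabla_w f_c(z)$ and
\begin{equation*}
(d,\bar{d})^H\nabla_w^2 f_c(z)(d,\bar{d})^T \;=\; v^T\nabla^2 f(x,y)v
\end{equation*}
for every $v=(v_x,v_y)\in\RR^{2n}$ with complex counterpart $d=v_x+iv_y$. Under this dictionary, $\nabla_w f_c(z)=0$ iff $\nabla f(x,y)=0$; Fact~\ref{fact:SQW}(i) yields, at any critical $z\notin\mathcal{U}_c$, a real direction $\hat{v}=(\Re\hat{z},\Im\hat{z})$ with $\hat{v}^T\nabla^2 f(x,y)\hat{v}\leq -\|z^\star\|^4/100$; and Fact~\ref{fact:SQW}(ii) yields, at every $z\in\mathcal{U}_c$ with $v$ the real counterpart of $g(z)$, the curvature lower bound $v^T\nabla^2 f(x,y)v\geq\frac{\|z^\star\|^2}{4}\|v\|^2$.

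I prove part (ii) first. Fix $(x,y)\in\mathcal{N}(\mathcal{X}^\star;\|z^\star\|/\sqrt{7})$, put $z=x+iy$, let $\hat{z}$ be its unique projection onto $\mathcal{Z}^\star$, and set $g=z-\hat{z}$ and $v=(x,y)-(\hat{x},\hat{y})$, so that $\|g\|=\|v\|=\dist((x,y),\mathcal{X}^\star)$. Consider the segment $\gamma(t)=\hat{z}+tg$ for $t\in[0,1]$. The pivotal geometric claim is that $\widehat{\gamma(t)}=\hat{z}$ along the entire segment. Indeed, the first-order optimality of the projection gives $\Im(\hat{z}^H g)=0$, so $\gamma(t)^H\hat{z}=\|z^\star\|^2+t\,\hat{z}^H g$ is real, and by Cauchy--Schwarz $|t\,\hat{z}^H g|\leq\|g\|\,\|z^\star\|\leq\|z^\star\|^2/\sqrt{7}$, whence $\gamma(t)^H\hat{z}\geq\|z^\star\|^2(1-1/\sqrt{7})>0$. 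This is precisely the condition for $\hat{z}$ to maximize $\theta\mapsto\Re(\gamma(t)^H z^\star e^{i\theta})$ over the circle $\mathcal{Z}^\star$, hence $\widehat{\gamma(t)}=\hat{z}$; in particular $\gamma(t)\in\mathcal{U}_c$ and $g(\gamma(t))=tg$.

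Applying Fact~\ref{fact:SQW}(ii) at each $\gamma(t)$ with direction $(tg,t\bar{g})^T$ and invoking the Wirtinger-to-real translation, I obtain $v^T\nabla^2 f(\gamma(t))v\geq\frac{\|z^\star\|^2}{4}\|v\|^2$ for every $t\in[0,1]$. Since $\nabla f(\hat{x},\hat{y})=0$, the fundamental theorem of calculus then gives
\begin{equation*}
\langle\nabla f(x,y),v\rangle \;=\; \int_0^1 v^T\nabla^2 f(\gamma(t))v\,dt \;\geq\; \frac{\|z^\star\|^2}{4}\|v\|^2,
\end{equation*}
and Cauchy--Schwarz yields $\|\nabla f(x,y)\|\geq\frac{\|z^\star\|^2}{4}\dist((x,y),\mathcal{X}^\star)$, which is \eqref{eq:eb-PR}.

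Part (i) then follows quickly. The inclusion $\mathcal{X}^\star\subset\mathcal{X}$ is immediate since the nonnegative $f$ vanishes on $\mathcal{X}^\star$. For the converse, let $(x,y)\in\mathcal{X}$. If $z\notin\mathcal{U}_c$, then Fact~\ref{fact:SQW}(i) (via the translation) supplies a real direction along which $\nabla^2 f(x,y)$ is strictly negative, contradicting $\nabla^2 f(x,y)\succeq 0$; hence $z\in\mathcal{U}_c$, and combining $\nabla f(x,y)=0$ with \eqref{eq:eb-PR} forces $\dist((x,y),\mathcal{X}^\star)=0$. The main obstacle of this plan is the geometric invariance claim $\widehat{\gamma(t)}=\hat{z}$: without it, Fact~\ref{fact:SQW}(ii) only controls curvature in the direction $g(\gamma(t))$, which generally differs from $v$, and the Taylor integration breaks down. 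The radius $\|z^\star\|/\sqrt{7}$ built into the definition of $\mathcal{U}_c$ is exactly what makes the invariance go through.
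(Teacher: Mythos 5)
Your proposal is correct and follows essentially the same route as the paper: translate Fact~\ref{fact:SQW} to the real Hessian via the identities $\nabla_w f_c = J\nabla f$ and $\nabla_w^2 f_c = J\nabla^2 f J^H$, exploit that the projection stays fixed along the segment to the projection point so that the positive-curvature bound applies in the direction $v$ all along it, and conclude by Cauchy--Schwarz; part (i) is handled identically (global optimality for one inclusion, negative curvature outside $\mathcal{U}_c$ plus the error bound inside for the other). The only cosmetic difference is that you integrate the Hessian directly against the gradient, whereas the paper sums two second-order Taylor expansions of $f$ — equivalent computations — and you additionally spell out the projection-invariance claim $\widehat{\gamma(t)}=\hat{z}$, which the paper asserts without proof.
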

\begin{proof}
It suffices to prove that statements (i) and (ii) of Fact~\ref{fact:SQW} lead to the statements (i) and (ii) herein. We first prove (ii). Let $(x,y)$ be an arbitrary point satisfying $\mbox{dist}\big((x,y), \mathcal{X}^\star\big) \leq \frac{1}{\sqrt{7}}\|z^\star\|$ and $(\hat{x},\hat{y})$ be the projection of $(x,y)$ to $\mathcal{X}^\star$. By definition, one can easily verify that $x + iy \in \mathcal{U}_c$ and the projection of $x + iy$ to $\mathcal{Z}^\star$ is $\hat{x} + i\hat{y}$. We assume that $(x,y) \notin \mathcal{X}^\star$ since~\eqref{eq:eb-PR} holds trivially otherwise. Thus, $x + iy \notin \mathcal{Z}^\star$ and we have $g(x + iy) = x - \hat{x} + i(y - \hat{y})$. This, together with the identity $\nabla_w^2 f_c(z) = J\nabla^2 f(x,y) J^H$ and~\eqref{eq:pos-curve}, yields
\begin{equation}
\label{eq:posi-curv-real}
\begin{pmatrix}
x - \hat{x} \\ y - \hat{y}
\end{pmatrix}^T \nabla^2 f(x,y) 
\begin{pmatrix}
x - \hat{x} \\ y - \hat{y}
\end{pmatrix} \geq \frac{\|z^\star\|^2}{4} \cdot \left\|(x,y) - (\hat{x}, \hat{y})\right\|^2.
\end{equation}
Let $(x(t), y(t)) = t\cdot (x,y) + (1 - t) \cdot (\hat{x}, \hat{y})$ for $t\in[0,1]$. Note that the projection of $(x(t),y(t))$ to $\mathcal{X}^\star$ is $(\hat{x},\hat{y})$ for any $t\in[0,1]$. Using the same arguments, \eqref{eq:posi-curv-real} holds if we substitute $(x,y)$ by $(x(t), y(t))$ for any $t\in[0,1]$. Hence, by the integral form of Taylor's series, we obtain
\begin{equation*}
\begin{aligned}
f(x,y) & = f(\hat{x},\hat{y}) + \nabla f(\hat{x}, \hat{y})^T \begin{pmatrix}
x - \hat{x} \\ y - \hat{y}
\end{pmatrix} + \int_0^1 (1-t)\cdot\begin{pmatrix}
x - \hat{x} \\ y - \hat{y}
\end{pmatrix}^T\nabla^2 f(x(t),y(t)) \begin{pmatrix}
x - \hat{x} \\ y - \hat{y}
\end{pmatrix} dt \\
& \geq f(\hat{x},\hat{y}) + \nabla f(\hat{x}, \hat{y})^T \begin{pmatrix}
x - \hat{x} \\ y - \hat{y}
\end{pmatrix} + \frac{\|z^\star\|^2}{8}\cdot \left\|(x,y) - (\hat{x}, \hat{y})\right\|^2,
\end{aligned}
\end{equation*} 
and similarly, 
\begin{equation*}
\begin{aligned}
f(\hat{x},\hat{y}) \geq f(x,y) - \nabla f(x,y)^T \begin{pmatrix}
x - \hat{x} \\ y - \hat{y}
\end{pmatrix} + \frac{\|z^\star\|^2}{8}\cdot \left\|(x,y) - (\hat{x}, \hat{y})\right\|^2.
\end{aligned}
\end{equation*} 
Noticing that $f(\hat{x},\hat{y}) = 0$ and $\nabla f(\hat{x}, \hat{y}) = 0$ (by the global optimality of $(\hat{x},\hat{y})$), we obtain~\eqref{eq:eb-PR}  by summing up the above two inequalities. 

We next prove (i). Let $\mathcal{X}$ be the set of second-order critical points of $f$. Clearly, we have $\mathcal{X}^\star\subset \mathcal{X}$ since any $(x,y)\in\mathcal{X}^\star$ is a global minimizer of $f$. We now show that $\mathcal{X} \subset \mathcal{X}^\star$. Let $(x,y)\in\mathcal{X}$ be arbitrary. By definition, $\nabla f(x,y) = 0$ and $\nabla^2 f(x,y) \succeq 0$. Using $\nabla f(x,y) = 0$ and the result in (i), we see that $(x,y) \in\mathcal{X}^\star$ or $\mbox{dist}\big((x,y), \mathcal{X}^\star\big) > \frac{1}{\sqrt{7}}\|z^\star\|$. If the latter holds, we have by definition that $x + iy \notin \mathcal{U}_c$.  In addition, it holds that $\nabla_w f_c(x + iy) = J\nabla f(x,y) = 0$. Hence, the inequality~\eqref{eq:neg-curve} holds for $x + iy$. This, together with the identity $\nabla_w^2 f_c(x + iy) = J\nabla^2 f(x,y) J^H$, implies that $\nabla^2 f(x,y) \nsucceq 0$, which contradicts with $(x,y)\in\mathcal{X}$. Therefore, we have $\mathcal{X}\subset\mathcal{X}^\star$ and hence $\mathcal{X} = \mathcal{X}^\star$.
\end{proof}

\subsubsection{Lipschitz Continuity of $\nabla^2 f$}
Our next step is to verify the Lipschitz continuity of $\nabla^2 f$. Let $A = (a_1, \ldots, a_m) \in \CC^{n\times m}$ and $M = \max_j \|a_j\|$. We need the following result, which combines Lemma 23 and Lemma 28 of~\cite{SQW16}.
\begin{fact}\label{fact:CR_3}
There exist constants $c_4,c_5,c_6>0$ such that when $m\ge c_4 n $, it holds with probability at least $1-c_5\exp(-c_6m)$ that
\begin{equation}
\label{eq:eigenvalue-bd}
\frac{m}{2} \leq \lambda_{\min}(AA^H) \leq \lambda_{\max}(AA^H) \leq 2m
\end{equation}
and
\begin{equation}
\label{eq:con-for-smooth}
\frac{1}{m}\sum_{j=1}^m\left| |a_j^Hw|^2 - |a_j^Hw^\prime|^2 \right| \le \frac{3}{2}\|w - w^\prime\| \left(\|w\| + \|w^\prime\|\right), \quad \forall w, w^\prime \in \CC^n.
\end{equation}
\end{fact}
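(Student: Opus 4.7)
The plan is to reduce both inequalities to a single high-probability event, namely that the $n\times n$ Gram matrix $AA^H$ is well-concentrated around its mean $mI_n$. Once a one-sided operator-norm bound of the form $\|AA^H/m-I_n\|\le 1/2$ is established on this event, inequality~\eqref{eq:eigenvalue-bd} is immediate, and \eqref{eq:con-for-smooth} follows from a short algebraic identity combined with Cauchy--Schwarz. Both conclusions therefore hold jointly on the same event, which is the source of the single probability bound $1-c_5\exp(-c_6m)$.

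For \eqref{eq:eigenvalue-bd}, since the $a_j$'s are i.i.d.\ $\CC\mathcal{N}(0,I_n)$, we have $\mathbb{E}[AA^H]=mI_n$. A standard non-asymptotic concentration statement for complex Wishart matrices, obtainable either from a matrix Bernstein inequality applied to the sum $\sum_{j=1}^{m}a_ja_j^H-I_n$ or from an $\epsilon$-net argument combined with sub-exponential concentration of $|a_j^Hu|^2$ for unit $u$, yields
$$\Pr\!\left(\left\|\tfrac{1}{m}AA^H-I_n\right\|\le\tfrac{1}{2}\right)\ge 1-c_5\exp(-c_6m)$$
whenever $m\ge c_4 n$, for universal constants $c_4,c_5,c_6>0$. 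On this event, Weyl's inequality gives $|\lambda_i(AA^H)/m-1|\le 1/2$ for every $i$, hence $m/2\le\lambda_{\min}(AA^H)$ and $\lambda_{\max}(AA^H)\le 3m/2\le 2m$, which is \eqref{eq:eigenvalue-bd}.

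For \eqref{eq:con-for-smooth}, the algebraic cornerstone is the complex identity $|u|^2-|v|^2=\Re\!\bigl((u-v)\overline{(u+v)}\bigr)$ applied pointwise with $u=a_j^Hw$ and $v=a_j^Hw'$. This yields
$$\left||a_j^Hw|^2-|a_j^Hw'|^2\right|\le |a_j^H(w-w')|\cdot|a_j^H(w+w')|.$$
Averaging over $j$ and applying Cauchy--Schwarz gives
$$\frac{1}{m}\sum_{j=1}^{m}\left||a_j^Hw|^2-|a_j^Hw'|^2\right|\le\sqrt{\tfrac{1}{m}(w-w')^HAA^H(w-w')}\cdot\sqrt{\tfrac{1}{m}(w+w')^HAA^H(w+w')}.$$
On the same event used for \eqref{eq:eigenvalue-bd}, we have $\tfrac{1}{m}v^HAA^Hv\le\tfrac{3}{2}\|v\|^2$ for every $v\in\CC^n$, so the right-hand side is bounded by $\tfrac{3}{2}\|w-w'\|\cdot\|w+w'\|\le\tfrac{3}{2}\|w-w'\|(\|w\|+\|w'\|)$ by the triangle inequality, establishing \eqref{eq:con-for-smooth}.

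The main obstacle is the concentration bound $\|AA^H/m-I_n\|\le 1/2$ with the stated exponential tail. This is standard random matrix theory, but one must take care to extract universal constants $c_4,c_5,c_6$ that depend on neither $n$ nor $m$. The cleanest route is the matrix Bernstein inequality applied to the i.i.d.\ rank-one summands $a_ja_j^H-I_n$, using the sub-exponential control of $\|a_j\|^2$ (equivalently the tail bounds for $|a_j^Hu|^2\sim\text{Exp}(1)$ for unit $u$) to bound the matrix variance proxy and the Orlicz norm of each summand. Once this single ingredient is secured, the rest of the argument is the elementary Weyl/Cauchy--Schwarz chain above, and \eqref{eq:eigenvalue-bd} and \eqref{eq:con-for-smooth} hold simultaneously.
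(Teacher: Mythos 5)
Your proposal is correct, but it is not how the paper handles this statement: the paper offers no proof at all, importing \eqref{eq:eigenvalue-bd} and \eqref{eq:con-for-smooth} directly as Lemmas 23 and 28 of the cited phase-retrieval paper of Sun, Qu, and Wright. Your argument is a clean self-contained substitute, and its main structural merit is that it reduces \emph{both} conclusions to the single event $\|\tfrac{1}{m}AA^H - I_n\| \le \tfrac12$: the identity $|u|^2-|v|^2=\Re\bigl((u-v)\overline{(u+v)}\bigr)$ together with Cauchy--Schwarz turns \eqref{eq:con-for-smooth} into a purely deterministic consequence of $\lambda_{\max}(AA^H)\le \tfrac32 m$, which also correctly handles the ``for all $w,w'$'' quantifier (the uniformity comes for free from the operator-norm bound), and it even yields the constant $\tfrac32$ exactly. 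The remaining probabilistic content is the standard covariance concentration for i.i.d.\ complex Gaussian vectors; your one caveat is real but minor: the bounded-range form of matrix Bernstein does not apply verbatim to the unbounded rank-one summands $a_ja_j^H-I_n$, so you must (as you note) use the sub-exponential/Orlicz-norm variant or an $\varepsilon$-net argument with scalar sub-exponential tails, and absorb the dimensional prefactor $n\le m/c_4$ into the constants $c_5,c_6$. With that ingredient quoted precisely, your proof is complete and arguably more transparent than the paper's citation, at the cost of re-deriving results the authors deliberately chose to quote.
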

\begin{prop}\label{prop:Lip}
Suppose that~\eqref{eq:eigenvalue-bd} and~\eqref{eq:con-for-smooth} hold. Then, for any $R>0$, $\nabla^2 f$ is Lipschitz continuous on $\mathbb{B}(0;R) = \left\{(x,y)\in\mathbb{R}^{2n}: \|(x,y)\|\leq R \right\}$ with Lipschitz constant $L = 20M^2R$.
\end{prop}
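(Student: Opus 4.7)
The plan is a direct computation: evaluate $\nabla^2 f$ explicitly in the real coordinates $u = (x,y)$, take the difference at two points $u, u' \in \mathbb{B}(0;R)$, and estimate the operator norm of each resulting term using the two probabilistic inputs~\eqref{eq:eigenvalue-bd} and~\eqref{eq:con-for-smooth}.

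Setting $P_j := B_j B_j^T$ and $\phi_j(u) := u^T P_j u - b_j^2 = |a_j^H z|^2 - b_j^2$ (where $z = x + iy$), two differentiations of $f(u) = \frac{1}{2m}\sum_j \phi_j(u)^2$ yield
$$
\nabla^2 f(u) = \frac{4}{m}\sum_j P_j\, uu^T P_j + \frac{2}{m}\sum_j \phi_j(u)\, P_j.
$$
Two elementary facts underpin the estimates. First, a direct computation gives $B_j^T B_j = \|a_j\|^2 I_2$, so $\|P_j\| = \|a_j\|^2 \le M^2$. Second, under the identification $\RR^{2n} \cong \CC^n$ via $v = (v_1, v_2) \mapsto \tilde v := v_1 + i v_2$, one has $\|B_j^T v\| = |a_j^H \tilde v|$; hence $\sum_j \|B_j^T v\|^2 = \|A^H \tilde v\|^2 \le 2m\|v\|^2$ by~\eqref{eq:eigenvalue-bd}.

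Next, decompose
$$
\nabla^2 f(u) - \nabla^2 f(u') = \frac{4}{m}\sum_j P_j (uu^T - u'u'^T) P_j + \frac{2}{m}\sum_j (\phi_j(u) - \phi_j(u')) P_j.
$$
For the first sum I expand $uu^T - u'u'^T = u(u-u')^T + (u-u')u'^T$; each piece then has the form $\sum_j (P_j s)(P_j t)^T = S T^T$, where $S, T \in \RR^{2n \times m}$ stack the columns $P_j s$ and $P_j t$. The inequality $\|S T^T\| \le \|S\|\|T\|$, together with $\|S\|^2 \le \sum_j \|P_j s\|^2 \le M^2 \sum_j |a_j^H \tilde s|^2 \le 2mM^2\|s\|^2$ and the analogous bound for $T$, yields a bound of order $M^2 R\|u-u'\|$ on the first sum. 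For the second sum, \eqref{eq:con-for-smooth} gives $\sum_j |\phi_j(u) - \phi_j(u')| \le \tfrac{3m}{2}(\|z\|+\|z'\|)\|z-z'\| \le 3mR\|u-u'\|$, which combined with $\|P_j\| \le M^2$ produces another $O(M^2R)$ contribution. Collecting these estimates delivers the stated Lipschitz constant $L = 20 M^2 R$.

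The main (mild) obstacle is keeping the operator-norm constants sharp in the first sum: if each summand $P_j uu^T P_j$ is bounded individually by $\|a_j\|^4\|u\|^2$ and then summed by the triangle inequality, an unwanted factor of $m$ survives. The remedy is to batch the rank-one summands into the matrix product $S T^T$ and invoke the averaged inequality $\sum_j\|P_j v\|^2 \le 2mM^2\|v\|^2$; this is the place where~\eqref{eq:eigenvalue-bd} enters in an essential way to produce the correct scaling in $m$, while \eqref{eq:con-for-smooth} handles the coefficient changes in the second sum.
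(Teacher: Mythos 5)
Your real-coordinate computation is sound in structure: the Hessian formula $\nabla^2 f(u) = \tfrac{4}{m}\sum_j P_j uu^TP_j + \tfrac{2}{m}\sum_j \phi_j(u)P_j$ is correct, the identities $B_j^TB_j=\|a_j\|^2 I_2$, $\|P_j\|\le M^2$ and $\|B_j^Tv\|=|a_j^H\tilde v|$ are all valid, and batching the rank-one terms into $ST^T$ with $\sum_j\|P_js\|^2\le 2mM^2\|s\|^2$ is exactly the right way to avoid the spurious factor of $m$. This is essentially the paper's argument transported from Wirtinger to real coordinates, but with a different grouping of terms, and that grouping costs you the stated constant. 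Tracking your own estimates: the two pieces of $\tfrac{4}{m}\sum_j P_j(uu^T-u'u'^T)P_j$ each give $\tfrac{4}{m}\cdot 2mM^2R\|u-u'\| = 8M^2R\|u-u'\|$, and the coefficient term gives $2M^2\cdot\tfrac{3}{2}\cdot 2R\|u-u'\| = 6M^2R\|u-u'\|$, for a total of $22M^2R$, not $20M^2R$. So the final sentence "collecting these estimates delivers $L=20M^2R$" is not substantiated by the estimates you describe; as written you prove Lipschitz continuity with constant $22M^2R$, which does not imply the proposition as stated.

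The reason the paper lands on $20$ is that it splits differently: working with the quadratic form and the Wirtinger Hessian \eqref{eq:wt-hess-PR}, the quartic part is decomposed via $\bigl(\Re(\bar\alpha\beta)\bigr)^2=\tfrac12|\alpha|^2|\beta|^2+\tfrac12\Re(\alpha^2\bar\beta^2)$ (with $\alpha=a_j^Hw$, $\beta=a_j^Hz$), so that the modulus piece $|\alpha_j|^2-|\alpha_j'|^2$ is merged with the $\phi_j$-difference term and estimated through \eqref{eq:con-for-smooth}, whose constant is $\tfrac32$, while only the phase piece $\Re[(\alpha_j^2-\alpha_j'^2)\bar\beta_j^2]$ goes through the $\lambda_{\max}(AA^H)\le 2m$ route with factor $2$; this yields $12+8=20$. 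In your grouping the entire quartic difference is pushed through the $\lambda_{\max}$ route (factor $2$ twice), which is where the extra $2M^2R$ comes from. The fix is either to accept the constant $22M^2R$ (harmless for the use of this proposition in Theorem~\ref{thm:CR-PR}, but then the proposition's statement should be adjusted), or to insert the above regrouping identity into your first sum, which reduces your argument to the paper's and recovers $20M^2R$ exactly.
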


\begin{proof} 
Let $w = x+ iy$ and $w^\prime = x^\prime + i y^\prime$ with $(x,y),(x^\prime,y^\prime)\in \mathbb{B}(0;R)$. Thus, $\|w\| \leq R$ and $\|w^\prime\|\leq R$. 
By the identities $\nabla_w^2 f_c (x + iy) = J\nabla^2 f(x,y) J^H$ and $2J^HJ = I_{2n}$, we have
\begin{equation*}
\begin{aligned}
\| \nabla^2 f(x,y) - \nabla^2 f(x^\prime, y^\prime) \| & = \sup_{\|(u,v)\| = 1} \left| \begin{pmatrix}
u \\ v
\end{pmatrix}^T \left[ \nabla^2 f(x,y) - \nabla^2 f(x^\prime, y^\prime) \right] \begin{pmatrix}
u \\ v
\end{pmatrix} 
\right| \\
& = \sup_{\|(u,v)\| = 1} \left| 4\begin{pmatrix}
u \\ v
\end{pmatrix}^T J^H \left[ \nabla_w^2 f_c(w) - \nabla_w^2 f_c(w^\prime) \right] J \begin{pmatrix}
u \\ v
\end{pmatrix} 
\right| \\
& = \sup_{\|z\| = 1} \left| \begin{pmatrix}
z \\ \overline{z}
\end{pmatrix}^H \left[ \nabla_w^2 f_c(w) - \nabla_w^2 f_c(w^\prime) \right] \begin{pmatrix}
z \\ \overline{z}
\end{pmatrix} 
\right|.
\end{aligned}
\end{equation*}
Using~\eqref{eq:wt-hess-PR},~\eqref{eq:eigenvalue-bd}, and~\eqref{eq:con-for-smooth}, we further have
\begin{equation*}
\begin{aligned}
& \| \nabla^2 f(x,y) - \nabla^2 f(x^\prime, y^\prime) \|  \\
&  \qquad \leq  \sup_{\|z\| = 1 }\Big| \frac{4}{m}\sum_{j=1}^m \left(|a_j^Hw|^2 - |a_j^Hw^\prime|^2\right) |a_j^Hz|^2 \Big| + \Big| \frac{2}{m} \sum_{j=1}^m \Re \Big( \left[ (a_j^Hw)^2 - (a_j^Hw^\prime)^2 \right] (z^H a_j)^2 \Big) \Big|\\
& \qquad \leq  \sup_{\|z\| = 1 } \frac{4}{m}\sum_{j=1}^m \left| |a_j^Hw|^2 - |a_j^Hw^\prime|^2 \right| |a_j^Hz|^2 + \frac{2}{m}\sum_{j=1}^m | a_j^H w - a_j^H w^\prime | | a_j^Hw + a_j^H w^\prime | |a_j^H z|^2 \\
& \qquad \leq 4M^2\cdot\frac{1}{m}\sum_{j=1}^m \left| |a_j^Hw|^2 - |a_j^Hw^\prime|^2 \right| + 4M^2R\|w - w^\prime\|\cdot\Big\|\frac{1}{m} \sum_{j=1}^m a_ja_j^H \Big\| \\
& \qquad \leq 6M^2\|w - w^\prime\|(\|w\| + \|w^\prime\|) + \frac{4}{m}M^2 R \|w - w^\prime\|\cdot \lambda_{\max}(AA^H) \\
& \qquad \le 20 M^2R\|(x,y)-(x',y')\|.
\end{aligned}
\end{equation*}
The proof is then completed.
\end{proof}

\subsubsection{Proof of Theorem~\ref{thm:CR-PR}}
In view of Theorem~\ref{thm:CR-LC-exact}, it suffices to prove that  with high probability, the following statements hold simultaneously: (i) Assumption~\ref{ass:hess-lip} holds, (ii) Assumption~\ref{ass:eb} holds, (iii) $\mathcal{L}(f(x^k,y^k))$ is bounded for some $k\geq 0$, and (iv) $\mathcal{X}^\star$ equals the set of second-order critical points of $f$.

\smallskip
Let $c_0 := \max\{c_2,c_4\}$ and $m\geq c_0n\log^3 n$. Suppose that $(x^0,y^0)\in\mathbb{R}^n\times\mathbb{R}^n$ is the initial point of Algorithm~\ref{alg:CR}. 
We define
\begin{align*}
\bar{R} & := \left( \frac{2\sqrt{2}}{\sqrt{m}} \sqrt{2m f(x^0,y^0) +\sum_{j=1}^m b_j^4} \ \right)^{\frac{1}{2}} > 0, \\
\mathcal{F} & := \mathbb{B}(0;2\bar{R}) =\{(x,y)\in\mathbb{R}^n\times\mathbb{R}^n : \|(x,y)\| \leq 2\bar{R} \}.
\end{align*}
Suppose that~\eqref{eq:eigenvalue-bd} and~\eqref{eq:con-for-smooth} hold. Let $(x,y)$ be an arbitrary point in $\mathcal{L}(f(x^0,y^0))$ and set $z := x + iy$. By definition, we have $f_c(z) = f(x, y) \leq f(x^0,y^0)$. It then follows from~\eqref{eq:eigenvalue-bd} and the definition of $A$ that
$$ 
\begin{aligned}
\|(x,y)\|^2 = \|z\|^2 & \leq \frac{1}{\lambda_{\min}(AA^H)}\cdot z^HAA^Hz \leq \frac{2}{m} \sum_{j=1}^m |a_j^H z|^2.
\end{aligned}
$$
Using the inequality $(\sum_{j=1}^m \alpha_j)^2 \leq m\sum_{j=1}^m \alpha_j^2$, which holds for any real numbers $\{\alpha_i\}_{i=1}^m$, we further have
$$ 
\begin{aligned}
\|(x,y)\|^2 & \leq \frac{2}{\sqrt{m}}\cdot\sqrt{\sum_{j=1}^m |a_j^Hz|^4} \leq \frac{2}{\sqrt{m}}\cdot\sqrt{\sum_{j=1}^m \left[2\left(|a_j^Hz|^2 - b_j^2\right)^2 + 2b_j^4\right]} \\
& = \frac{2\sqrt{2}}{\sqrt{m}} \sqrt{2m f_c(z) +\sum_{j=1}^m b_j^4} \leq \frac{2\sqrt{2}}{\sqrt{m}} \sqrt{2m f(x^0,y^0) +\sum_{j=1}^m b_j^4} = \bar{R}^2.
\end{aligned}
$$
By the definition of $\mathcal{F}$, we have $\mathcal{L}(f(x^0,y^0))\subset\mbox{int}(\mathcal{F})$ and $\mathcal{L}(f(x^0,y^0))$ is bounded. In addition, by Proposition~\ref{prop:Lip}, $\nabla^2 f$ is Lipschitz continuous on $\mathcal{F}$ with Lipschitz constant $L = 40M^2\bar{R}$. Hence, statements (i) and (iii) above hold with probability at least $1 - c_5\exp(-c_6m)$. Furthermore, by Proposition~\ref{prop:geometry-f-PR}, statements (ii) and (iv) above hold with probability at least $1 - c_4m^{-1}$. Therefore, there exists a $c_1>0$ such that all the statements (i)--(iv) hold with probability at least $1 - c_1m^{-1}$. The proof is then completed. 
\endproof

\subsection{Low-Rank Matrix Recovery}\label{sec:LRM}
In this subsection, we consider the application of the CR method for solving low-rank matrix recovery problems. Specifically, the problem of interest is to recover an unknown low-rank matrix $X^\star \in \mathbb{R}^{n_1\times n_2}$ with $\mbox{rank}(X^\star) = r\ll \min\{n_1,n_2\}$ from the measurements
\begin{equation}
\label{eq:lr-mat-measure}
\mathbb{R}^m \ni b = \mathcal{A}(X^\star),
\end{equation}
where the linear operator $\mathcal{A}:\mathbb{R}^{n_1\times n_2} \rightarrow \mathbb{R}^m$ is given by $\mathcal{A}(X) = \left( \langle A_1, X \rangle, \ldots, \langle A_m, X \rangle \right)$ for any $X\in\mathbb{R}^{n_1 \times n_2}$. 
For simplicity, we assume that $n_1 = n_2 = n$, $A_i$'s are symmetric, and the target matrix $X^\star$ is symmetric and positive semidefinite.

Since $X^\star\succeq 0$ with $\mbox{rank}(X^\star) = r$, we have $X^\star = U^\star {U^\star}^T$ for some $U^\star\in\mathbb{R}^{n\times r}$. This motivates the following nonconvex formulation for recovering $X^\star$:
\begin{equation}
\label{eq:ncvx-lr-mat}
\min_{U\in\mathbb{R}^{n\times r}} f(U) := \frac{1}{4m}\left\| \mathcal{A}(UU^T) - b \right\|^2.
\end{equation}
By letting $\mathcal{U} := \{ U^\star Q: Q \in \mathcal{O}^r \}$, it holds that $\mathcal{U} = \{ U\in\mathbb{R}^{n\times r}: X^\star = UU^T \}$. Hence, we can recover the unknown matrix $X^\star$ as long as we find any $U\in\mathcal{U}$. Observe that $f$ is non-negative and $f(U) = 0$ for any $U\in\mathcal{U}$. Hence, any $U\in \mathcal{U}$ is a global minimizer of~\eqref{eq:ncvx-lr-mat}.

We next introduce the so-called restricted isometry property (RIP) of the operator $\mathcal{A}$.

\begin{defi} \label{defi:rip}
We say that the linear operator $\mathcal{A}$ satisfies $(r,\delta_r)$-RIP if for any matrix $X\in\mathbb{R}^{n\times n}$ with ${\rm rank}(X)\leq r$,
$$ (1 - \delta_r)\|X\|_F^2 \leq \frac{1}{m}\sum_{i=1}^m \langle A_i, X \rangle^2 \leq (1 + \delta_r)\|X\|_F^2. $$
\end{defi}
The above definition has played an important role in the literature of low-rank matrix recovery. One well-known case where the RIP holds is when $\mathcal{A}$ is a random measurement operator. For example, if $\{A_i\}_{i=1}^m$ are mutually independent random Gaussian matrices, then when $m\geq Dnr$, $\mathcal{A}$ satisfies the RIP for some $\delta_r<1$ with probability at least $1 - C\exp(-dm)$, where $C,D,d$ are absolute positive scalars~\cite[Theorem 2.3]{CP11}.

We now state the main result of this subsection. 

\begin{thm}
\label{thm:CR-LRM}
Suppose that $\mathcal{A}$ satisfies $(2r,\delta_{2r})$-RIP with $\delta_{2r}<\frac{1}{10}$. Then, with any arbitrary initialization, the sequence of iterates $\{U^k\}_{k\ge0}$ generated by Algorithm~\ref{alg:CR} for solving~\eqref{eq:ncvx-lr-mat} converges at least Q-quadratically to an element in $\mathcal{U}$.
\end{thm}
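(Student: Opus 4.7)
Following the template of Theorem \ref{thm:CR-PR}, the plan is to verify the four ingredients needed to invoke Theorem \ref{thm:CR-LC-exact}: (i) Assumption \ref{ass:hess-lip}, (ii) Assumption \ref{ass:eb}, (iii) boundedness of $\mathcal{L}(f(U^k))$ for some $k$, and (iv) coincidence between the set of second-order critical points of $f$ and $\mathcal{U}$, so that the Q-quadratic limit guaranteed by Theorem \ref{thm:CR-LC-exact} actually lies in $\mathcal{U}$.

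To handle (i) and (iii), I would observe that $UU^T - X^\star$ has rank at most $2r$, so the $(2r,\delta_{2r})$-RIP yields
\begin{equation*}
\|UU^T - X^\star\|_F^2 \;\leq\; \tfrac{4}{1-\delta_{2r}}\, f(U) \;\leq\; \tfrac{4}{1-\delta_{2r}}\, f(U^0) \quad \forall\, U \in \mathcal{L}(f(U^0)),
\end{equation*}
which in turn bounds $\|UU^T\|_F$ and hence $\|U\|_F^2 = \|UU^T\|_* \leq \sqrt{r}\,\|UU^T\|_F$ uniformly on $\mathcal{L}(f(U^0))$. Choosing $\mathcal{F}$ to be a slightly larger Frobenius ball and noting that $f$ is a polynomial of degree four in $U$, its Hessian $\nabla^2 f$ is a degree-two polynomial map of $U$ and is therefore Lipschitz on the compact convex set $\mathcal{F}$, yielding Assumption \ref{ass:hess-lip}.

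For (iv) and (ii), I would work with the explicit formulas
\begin{equation*}
\nabla f(U) = M(U)\,U, \qquad \nabla^2 f(U)[D,D] = \tfrac{1}{2m}\,\|\mathcal{A}(UD^T + DU^T)\|^2 + \langle M(U),\, DD^T\rangle,
\end{equation*}
where $M(U) := \tfrac{1}{m}\mathcal{A}^*(\mathcal{A}(UU^T) - b)$. Testing the second-order necessary condition along the rank-$2r$ direction $D = U - \hat U R^\star$, with $R^\star\in\mathcal{O}^r$ aligning $U$ with its projection $\hat U\in\mathcal{U}$, and using the RIP with $\delta_{2r}<\tfrac{1}{10}$ in the now-standard Burer-Monteiro style argument, I would conclude that every second-order critical point must satisfy $UU^T = X^\star$, so that $\mathcal{X} = \mathcal{U}$; this yields (iv). Next, writing $U = \hat U + \Delta$ with $\Delta$ small and using RIP to replace $M(U)$ by $UU^T - X^\star = \hat U\Delta^T + \Delta\hat U^T + \Delta\Delta^T$ up to a $\delta_{2r}$-error, together with $\sigma_r(\hat U) = \sigma_r(U^\star) > 0$, I plan to derive a gradient lower bound of the form
\begin{equation*}
\|\nabla f(U)\|_F \;\geq\; c\,\sigma_r(U^\star)\cdot\dist(U,\mathcal{U}) \quad \text{whenever } \dist(U,\mathcal{U}) \leq \rho,
\end{equation*}
which is exactly Assumption \ref{ass:eb}. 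Invoking Theorem \ref{thm:CR-LC-exact} then delivers the desired Q-quadratic convergence to an element of $\mathcal{U}$ from any initialization.

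The main obstacle is the error bound, i.e., the quantitative gradient-to-distance inequality. Although the no-spurious-local-minima phenomenon under RIP can be lifted from existing analyses, deriving the lower bound on $\|\nabla f(U)\|_F$ requires simultaneously (a) absorbing the perturbation $M(U) - (UU^T - X^\star)$ via the RIP applied to matrices of rank at most $2r$, (b) handling the cross-term $\Delta\hat U^T$ by choosing the rotation $R^\star$ optimally so that $\hat U^T\Delta$ is symmetric (the standard Procrustes alignment), and (c) restricting to a sufficiently small neighbourhood of $\mathcal{U}$ so that the higher-order term $\Delta\Delta^T$ is dominated, allowing the leading contribution of order $\sigma_r(U^\star)^2\|\Delta\|_F^2$ to survive. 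This is the core technical work; once it is in place, the remainder of the argument parallels that of Theorem \ref{thm:CR-PR}.
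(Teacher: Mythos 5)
Your plan follows essentially the same route as the paper: verify Assumptions \ref{ass:hess-lip} and \ref{ass:eb}, boundedness of the sublevel set, and $\mathcal{X}=\mathcal{U}$, then invoke Theorem \ref{thm:CR-LC-exact}, with the error bound obtained exactly as in the paper's Proposition \ref{prop:second-order=global-lrm} by pairing $\nabla f(U)$ with the Procrustes-aligned error $\Delta=U-\hat U$, absorbing the RIP perturbation, and using smallness of $\dist(U,\mathcal{U})$ so that the $\Delta\Delta^T$ term is dominated by the $\lambda_r(X^\star)\|\Delta\|_F^2$ contribution. The only cosmetic differences are that the paper cites \cite[Theorem 3.2]{BNS16} for the strict-saddle property rather than rederiving it, and establishes the Hessian Lipschitz constant explicitly via the RIP instead of the qualitative polynomial-on-a-compact-set argument, neither of which changes the substance.
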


The rest of this subsection is devoted to proving Theorem~\ref{thm:CR-LRM}. Before we proceed, let us introduce some notations and preliminaries. Since $X^\star\succeq 0$ and $\mbox{rank}(X^\star) = r$, we have $\lambda_1(X^\star) \geq \cdots\geq \lambda_r(X^\star) >0$. As a result, the singular values of any $U\in\mathcal{U}$ are $\{\sqrt{\lambda_i(X^\star)}\}_{i=1}^r$. Let $\nabla f(U)\in\mathbb{R}^{n\times r}$ and $\nabla^2 f(U)\in\mathbb{R}^{(nr)\times(nr)}$ be the gradient and Hessian of $f$ at $U$, respectively. 
For problem~\eqref{eq:ncvx-lr-mat}, a routine calculation gives
\begin{equation}
\label{eq:grad-lrm}
\nabla f(U) = \frac{1}{m}\sum_{i=1}^m \langle A_i, UU^T - X^\star\rangle A_iU
\end{equation}
and 
\begin{equation}
\label{eq:hess-lrm}
\begin{aligned}
\mbox{vec}(Z)^T\nabla^2 f(U) \mbox{vec}(Z) & = \left\langle Z,  \lim_{\tau\rightarrow 0} \frac{\nabla f(U + \tau Z) - \nabla f(U)}{\tau}\right\rangle \\
& = \frac{1}{m}\sum_{i=1}^m 2\langle A_i, UZ^T \rangle^2 + \langle A_i, UU^T - X^\star \rangle \langle A_i, ZZ^T \rangle,
\end{aligned}
\end{equation}
where $\mbox{vec}(Z)\in\mathbb{R}^{nr}$ is the vector obtained by stacking the columns of $Z$. The following result, which is stated in \cite[Lemma 4.1]{BNS16} and is related to~\cite[Lemma 2.1]{C08}, is crucial to our analysis.

\begin{fact}
\label{fact:rip}
For any $X,Y\in\mathbb{R}^{n\times n}$ with ${\rm rank}(X),{\rm rank}(Y) \le r$, if $\mathcal{A}$ is $(2r,\delta_{2r})$-RIP, then it holds that
$$
\left| \frac{1}{m} \sum_{i=1}^m \langle A_i, X \rangle \langle A_i, Y \rangle - \langle X, Y \rangle \right| \leq \delta_{2r} \|X\|_F\|Y\|_F.
$$
\end{fact}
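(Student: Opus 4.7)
The plan is to reduce the claim to the defining RIP inequality applied to rank-$2r$ matrices via a polarization identity. The key observation is that the inner-product quantity we want to control is a linear combination of squared norms of $X \pm Y$, and each of $X+Y$ and $X-Y$ has rank at most $2r$, so it lies in the regime where RIP is applicable. In order to obtain the clean bound $\delta_{2r}\|X\|_F\|Y\|_F$ (without an extra constant factor), I would first normalize: if either $X$ or $Y$ is zero the inequality is trivial, so I assume both are nonzero and set $\widetilde{X} = X/\|X\|_F$, $\widetilde{Y} = Y/\|Y\|_F$, which have unit Frobenius norm and rank at most $r$.

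Next I would invoke the polarization identity
\begin{equation*}
\tfrac{4}{m}\sum_{i=1}^m \langle A_i, \widetilde{X}\rangle \langle A_i, \widetilde{Y}\rangle = \tfrac{1}{m}\sum_{i=1}^m \langle A_i, \widetilde{X}+\widetilde{Y}\rangle^2 - \tfrac{1}{m}\sum_{i=1}^m \langle A_i, \widetilde{X}-\widetilde{Y}\rangle^2,
\end{equation*}
together with $4\langle \widetilde{X},\widetilde{Y}\rangle = \|\widetilde{X}+\widetilde{Y}\|_F^2 - \|\widetilde{X}-\widetilde{Y}\|_F^2$. Subtracting these two identities term by term expresses
$$
4\left(\tfrac{1}{m}\sum_{i=1}^m \langle A_i, \widetilde{X}\rangle\langle A_i, \widetilde{Y}\rangle - \langle \widetilde{X},\widetilde{Y}\rangle\right)
$$
as a difference of two RIP-deviation terms, one for $\widetilde{X}+\widetilde{Y}$ and one for $\widetilde{X}-\widetilde{Y}$. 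Since $\mathrm{rank}(\widetilde{X}\pm\widetilde{Y})\leq 2r$, the $(2r,\delta_{2r})$-RIP applied to each yields that each deviation is at most $\delta_{2r}\|\widetilde{X}\pm\widetilde{Y}\|_F^2$ in absolute value.

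Combining via the triangle inequality and using the parallelogram law
$$
\|\widetilde{X}+\widetilde{Y}\|_F^2 + \|\widetilde{X}-\widetilde{Y}\|_F^2 = 2\|\widetilde{X}\|_F^2 + 2\|\widetilde{Y}\|_F^2 = 4,
$$
I obtain
$$
\left|\tfrac{1}{m}\sum_{i=1}^m \langle A_i, \widetilde{X}\rangle\langle A_i, \widetilde{Y}\rangle - \langle \widetilde{X},\widetilde{Y}\rangle\right| \leq \delta_{2r}.
$$
Finally, multiplying through by $\|X\|_F\|Y\|_F$ (bilinearity of both sides in $X$ and $Y$) recovers the target inequality.

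There is no real obstacle here; the only subtle point is ensuring the constant on the right-hand side is exactly $\delta_{2r}$ (and not $2\delta_{2r}$ or $4\delta_{2r}$), which is why the normalization step and the parallelogram identity must be combined carefully. Everything else is a direct application of the definition of $(2r,\delta_{2r})$-RIP to the rank-$2r$ matrices $\widetilde{X}\pm\widetilde{Y}$.
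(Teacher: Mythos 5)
Your proof is correct, and it coincides with the standard argument behind this fact: the paper itself offers no proof, simply citing Lemma 4.1 of Bhojanapalli et al.\ (which is in turn a matrix analogue of Cand\`{e}s' Lemma 2.1), and the proof in those references is precisely your normalization--polarization--parallelogram computation, applying the $(2r,\delta_{2r})$-RIP to the rank-$\le 2r$ matrices $\widetilde{X}\pm\widetilde{Y}$ so that the factor of $4$ from polarization cancels against $\|\widetilde{X}+\widetilde{Y}\|_F^2+\|\widetilde{X}-\widetilde{Y}\|_F^2=4$, yielding the sharp constant $\delta_{2r}$. There is no gap: the rank bound $\mathrm{rank}(\widetilde{X}\pm\widetilde{Y})\le 2r$, the bilinearity used in the final rescaling, and the trivial case $X=0$ or $Y=0$ are all handled correctly.
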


\subsubsection{Second-Order Critical Points and Local EB Condition}
We first show that under the RIP, the set of second-order critical points of $f$ is $\mathcal{U}$. Moreover, we show that in a neighbourhood of $\mathcal{U}$, the local EB condition~\eqref{eq:eb} holds. The following result is due to~\cite[Theorem 3.2]{BNS16}.
\begin{fact}
\label{fact:stric-sad-lrm}
Suppose that $\mathcal{A}$ satisfies $(2r, \delta_{2r})$-RIP with $\delta_{2r} < \frac{1}{10}$. Then, for any $U\in\mathbb{R}^{n\times r}$ such that $\nabla f(U) = 0$ and $UU^T\neq X^\star$, it holds that
$$ \lambda_{\min} \left( \nabla^2 f(U) \right) \leq -\frac{\lambda_r(X^\star)}{5}<0. $$
\end{fact}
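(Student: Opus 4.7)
The plan is to exhibit an explicit direction $Z \in \mathbb{R}^{n \times r}$ along which the Hessian quadratic form $Q(Z) := \mbox{vec}(Z)^T \nabla^2 f(U)\, \mbox{vec}(Z)$ is sufficiently negative relative to $\|Z\|_F^2$. Since $X^\star = U^\star (U^\star)^T$ for some $U^\star \in \mathbb{R}^{n\times r}$, I would pick the Procrustes-optimal rotation $R \in \mathcal{O}^r$ minimizing $\|U - U^\star Q\|_F$ over $Q\in\mathcal{O}^r$, set $V := U^\star R$ so that $VV^T = X^\star$, and define
\[
Z := U - V.
\]
It is a standard Procrustes fact that $V^T U$ is symmetric and positive semidefinite; in particular $Z^T V = V^T U - V^T V$ is symmetric. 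Under the hypothesis $UU^T \neq X^\star$ one checks directly that $\|Z\|_F > 0$.

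My next move is to use the first-order condition $\nabla f(U)=0$ to collapse $Q(Z)$ into a clean difference of squares. Writing $M := UU^T - X^\star$, direct expansion of $U = V + Z$ yields the key algebraic identity
\[
UZ^T + ZU^T = ZZ^T + M.
\]
Because each $A_i$ is symmetric, $\langle A_i, UZ^T\rangle = \tfrac{1}{2}\langle A_i, UZ^T + ZU^T\rangle$, and pairing $\nabla f(U)=0$ with $Z$ gives $\sum_i \langle A_i, M\rangle \langle A_i, UZ^T + ZU^T\rangle = 0$. Substituting $\langle A_i, ZZ^T\rangle = \langle A_i, UZ^T+ZU^T\rangle - \langle A_i, M\rangle$ into the Hessian expression \eqref{eq:hess-lrm} cancels the cross term and leaves the identity
\[
Q(Z) \;=\; \frac{1}{2m}\sum_{i=1}^m \langle A_i, UZ^T + ZU^T\rangle^2 \;-\; \frac{1}{m}\sum_{i=1}^m \langle A_i, M\rangle^2.
\]

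With this in hand I would apply the RIP. Both $UZ^T+ZU^T$ and $M$ are symmetric of rank at most $2r$, so Definition~\ref{defi:rip} bounds the first sum above by $\tfrac{1+\delta_{2r}}{2}\|UZ^T+ZU^T\|_F^2$ and the second below by $(1-\delta_{2r})\|M\|_F^2$. Expanding $\|UZ^T+ZU^T\|_F^2 = \|M+ZZ^T\|_F^2$ and regrouping gives
\[
Q(Z) \;\leq\; -\tfrac{1-3\delta_{2r}}{2}\|M\|_F^2 + (1+\delta_{2r})\langle M, ZZ^T\rangle + \tfrac{1+\delta_{2r}}{2}\|ZZ^T\|_F^2,
\]
so under $\delta_{2r}<\tfrac{1}{10}$ the leading coefficient of $\|M\|_F^2$ is strictly negative. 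To turn this into a bound proportional to $-\|Z\|_F^2$, I would invoke the Procrustes-type inequality (see, e.g., \cite[Lemma 2.1]{C08} and its refinements)
\[
\|UU^T - VV^T\|_F^2 \;\geq\; 2(\sqrt{2}-1)\,\lambda_r(X^\star)\,\|Z\|_F^2,
\]
valid because $V^T U \succeq 0$.

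The main obstacle is the quantitative balancing. One must bound the positive quartic-in-$Z$ remainder, namely $(1+\delta_{2r})\langle M, ZZ^T\rangle + \tfrac{1+\delta_{2r}}{2}\|ZZ^T\|_F^2$, by strictly less than the negative contribution $\tfrac{1-3\delta_{2r}}{2}\|M\|_F^2 - \tfrac{\lambda_r(X^\star)}{5}\|Z\|_F^2$. Here the Procrustes symmetry of $Z^T V$ is essential: expanding $\langle M, ZZ^T\rangle$ via $M = VZ^T + ZV^T + ZZ^T$ converts it into inner products controlled by the symmetric matrix $Z^T V$, while the Procrustes lower bound absorbs the $\|ZZ^T\|_F^2$ term into $\|M\|_F^2$. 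A careful chase of constants is what forces the hypothesis $\delta_{2r}<\tfrac{1}{10}$ and the explicit factor $\tfrac{1}{5}$ in the conclusion. Since $Q(Z)\geq \lambda_{\min}(\nabla^2 f(U))\|Z\|_F^2$ and $\|Z\|_F>0$, dividing through yields $\lambda_{\min}(\nabla^2 f(U)) \leq -\lambda_r(X^\star)/5$, as required.
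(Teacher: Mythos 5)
The paper does not prove this statement at all: it imports it verbatim from \cite[Theorem 3.2]{BNS16}, so your attempt should be measured against that standard argument. Your skeleton follows it faithfully up to a point, and everything you actually verify is correct: the Procrustes-aligned direction $Z = U - V$ with $V^T U$ symmetric positive semidefinite, the identity $UZ^T + ZU^T = M + ZZ^T$ with $M = UU^T - X^\star$, the consequence $\sum_{i}\langle A_i, M\rangle\langle A_i, M + ZZ^T\rangle = 0$ of $\nabla f(U) = 0$, the resulting exact formula
\begin{equation*}
Q(Z) = \frac{1}{2m}\left\|\mathcal{A}(M + ZZ^T)\right\|^2 - \frac{1}{m}\left\|\mathcal{A}(M)\right\|^2,
\end{equation*}
the rank-$2r$ RIP applications, and the lower bound $\|M\|_F^2 \geq 2(\sqrt{2}-1)\lambda_r(X^\star)\|Z\|_F^2$.

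The genuine gap is at the decisive balancing step, which you defer to ``a careful chase of constants'' --- and along the route you set up, no such chase closes. Having applied RIP to $\|\mathcal{A}(M+ZZ^T)\|^2$ as a whole, you must dominate $(1+\delta_{2r})\langle M, ZZ^T\rangle + \frac{1+\delta_{2r}}{2}\|ZZ^T\|_F^2$ by $\frac{1-3\delta_{2r}}{2}\|M\|_F^2$; but with the available controls (Cauchy--Schwarz on the cross term and the Procrustes bound $\|ZZ^T\|_F^2 \leq 2\|M\|_F^2$) the worst-case coefficient is $-\frac{1-3\delta_{2r}}{2} + (1+\sqrt{2})(1+\delta_{2r}) > 0$, and the symmetry of $Z^T V$ gives no sign control since $\langle M, ZZ^T\rangle = 2\,{\rm tr}\bigl((V^TZ)(Z^TZ)\bigr) + \|ZZ^T\|_F^2$ with $V^TZ$ possibly indefinite. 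Even re-using the first-order condition through Fact~\ref{fact:rip} to get $\langle M, ZZ^T\rangle \leq -(1-\delta_{2r})\|M\|_F^2 + \delta_{2r}\|M\|_F\|ZZ^T\|_F$ yields only about $-0.08\|M\|_F^2$ as $\delta_{2r}\uparrow\frac{1}{10}$, hence roughly $-0.07\,\lambda_r(X^\star)$ after the Procrustes bound --- short of the claimed $-\lambda_r(X^\star)/5$. The missing idea is to spend the first-order condition a \emph{second} time \emph{before} invoking RIP: since $\langle \mathcal{A}(M), \mathcal{A}(M) + \mathcal{A}(ZZ^T)\rangle = 0$, one has $\langle \mathcal{A}(M), \mathcal{A}(ZZ^T)\rangle = -\|\mathcal{A}(M)\|^2$, whence the exact identity
\begin{equation*}
Q(Z) = \frac{1}{2m}\left\|\mathcal{A}(ZZ^T)\right\|^2 - \frac{3}{2m}\left\|\mathcal{A}(M)\right\|^2,
\end{equation*}
with the troublesome cross term eliminated. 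RIP then gives $Q(Z) \leq \frac{1+\delta_{2r}}{2}\|ZZ^T\|_F^2 - \frac{3(1-\delta_{2r})}{2}\|M\|_F^2$; the lemma $\|ZZ^T\|_F^2 \leq 2\|M\|_F^2$ (which your sketch never states, and which genuinely requires $V^TU \succeq 0$ --- take $U = -V$ otherwise) yields $Q(Z) \leq -\frac{1-5\delta_{2r}}{2}\|M\|_F^2 \leq -\frac{1}{4}\|M\|_F^2$ for $\delta_{2r} < \frac{1}{10}$, and the Procrustes lower bound finally gives $Q(Z) \leq -\frac{\sqrt{2}-1}{2}\lambda_r(X^\star)\|Z\|_F^2 \leq -\frac{\lambda_r(X^\star)}{5}\|Z\|_F^2$, since $\frac{\sqrt{2}-1}{2} \approx 0.207 > \frac{1}{5}$; dividing by $\|Z\|_F^2 > 0$ completes the proof. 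That the constants $\frac{1}{10}$ and $\frac{1}{5}$ close only under this sharper identity shows the second use of criticality is not a routine omitted computation but the crux of the argument.
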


\begin{prop}
\label{prop:second-order=global-lrm}
Suppose that $\mathcal{A}$ satisfies $(2r, \delta_{2r})$-RIP with $\delta_{2r} < \frac{1}{10}$. Then, the following statements hold.
\begin{enumerate}
\item[(i)] $\mathcal{U}$ equals the set of second-order critical points of $f$.
\item[(ii)] The following error bound holds:
\begin{equation}
\label{eq:eb-lrm}
\dist(U, \mathcal{U}) \leq \frac{2}{\lambda_r(X^\star)}\|\nabla f(U)\| \quad \mbox{whenever}\;\; \dist(U,\mathcal{U})\leq \frac{1}{3}\sqrt{\lambda_r(X^\star)}.
\end{equation} 
\end{enumerate}
\end{prop}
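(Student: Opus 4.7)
Part (i) is essentially a restatement of Fact~\ref{fact:stric-sad-lrm}. Every $U \in \mathcal{U}$ satisfies $f(U) = 0 = \inf_V f(V)$ and is therefore a global minimizer, giving $\mathcal{U} \subset \mathcal{X}$ for free. Conversely, for $U \in \mathcal{X}$ we have $\nabla f(U) = 0$ and $\nabla^2 f(U) \succeq 0$; if $UU^T \neq X^\star$, then Fact~\ref{fact:stric-sad-lrm} would force $\lambda_{\min}(\nabla^2 f(U)) \leq -\lambda_r(X^\star)/5$, contradicting positive semidefiniteness. Hence $UU^T = X^\star$ and $U \in \mathcal{U}$, establishing (i).

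For part (ii), fix $U$ with $\dist(U,\mathcal{U}) \leq \tfrac{1}{3}\sqrt{\lambda_r(X^\star)}$, let $\hat{U} \in \Argmin_{V \in \mathcal{U}}\|V - U\|_F$, and set $\Delta := U - \hat{U}$ and $E := UU^T - X^\star$. Writing $\hat{U} = U^\star Q^\star$ for the optimal Procrustes rotation $Q^\star \in \mathcal{O}^r$, the first-order optimality condition of the Procrustes problem (standard SVD argument) implies that $\hat{U}^T \Delta$ is a symmetric $r\times r$ matrix. A direct expansion using $U = \hat{U} + \Delta$ then yields the two identities
\[
E = \hat{U}\Delta^T + \Delta\hat{U}^T + \Delta\Delta^T, \qquad U\Delta^T + \Delta U^T = E + \Delta\Delta^T.
\]
My plan is to bound $\langle \nabla f(U), \Delta\rangle$ from below by a positive multiple of $\lambda_r(X^\star)\|\Delta\|_F^2$ and then invoke Cauchy--Schwarz. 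Using formula~\eqref{eq:grad-lrm}, the symmetry of each $A_i$ (which gives $\langle A_i, \Delta U^T\rangle = \tfrac{1}{2}\langle A_i, U\Delta^T + \Delta U^T\rangle$), and the second identity above, a short calculation produces
\[
\langle \nabla f(U), \Delta \rangle = \tfrac{1}{2m}\sum_{i=1}^m \langle A_i, E\rangle^2 + \tfrac{1}{2m}\sum_{i=1}^m \langle A_i, E\rangle\langle A_i, \Delta\Delta^T\rangle.
\]
Since $E$ has rank at most $2r$ and $\Delta\Delta^T$ has rank at most $r$, the $(2r,\delta_{2r})$-RIP (on the first sum) combined with Fact~\ref{fact:rip} (on the second) yields
\[
\langle \nabla f(U), \Delta\rangle \geq \tfrac{1-\delta_{2r}}{2}\|E\|_F^2 - \tfrac{1+\delta_{2r}}{2}\|E\|_F\,\|\Delta\Delta^T\|_F.
\]

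What remains is a lower bound on $\|E\|_F$ in terms of $\|\Delta\|_F$. Setting $A := \hat{U}\Delta^T + \Delta\hat{U}^T$, the symmetry of $\hat{U}^T\Delta$ forces the cross term in the expansion of $\|A\|_F^2$ to equal $\mbox{tr}((\hat{U}^T\Delta)^2) = \|\hat{U}^T\Delta\|_F^2 \geq 0$; together with $\|\hat{U}\Delta^T\|_F^2 = \mbox{tr}(\hat{U}^T\hat{U}\,\Delta^T\Delta) \geq \lambda_r(X^\star)\|\Delta\|_F^2$ (using $\lambda_{\min}(\hat{U}^T\hat{U}) = \lambda_r(X^\star)$), this gives $\|A\|_F^2 \geq 2\lambda_r(X^\star)\|\Delta\|_F^2$. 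Applying the reverse triangle inequality to $E = A + \Delta\Delta^T$ then delivers $\|E\|_F \geq \sqrt{2\lambda_r(X^\star)}\,\|\Delta\|_F - \|\Delta\|_F^2$. Plugging these estimates back, exploiting the neighborhood condition $\|\Delta\|_F \leq \tfrac{1}{3}\sqrt{\lambda_r(X^\star)}$ (so that $\|\Delta\Delta^T\|_F \leq \|\Delta\|_F^2 \leq \tfrac{1}{3}\sqrt{\lambda_r(X^\star)}\|\Delta\|_F$) and the assumption $\delta_{2r} < \tfrac{1}{10}$, careful algebraic bookkeeping is expected to produce $\langle \nabla f(U), \Delta\rangle \geq \tfrac{1}{2}\lambda_r(X^\star)\|\Delta\|_F^2$. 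Cauchy--Schwarz then gives $\|\nabla f(U)\|_F\,\|\Delta\|_F \geq \tfrac{1}{2}\lambda_r(X^\star)\|\Delta\|_F^2$, which rearranges to the desired estimate~\eqref{eq:eb-lrm}.

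The main obstacle I anticipate is the precise calibration of constants in the closing step: the specific neighborhood radius $\tfrac{1}{3}\sqrt{\lambda_r(X^\star)}$ and the RIP threshold $\delta_{2r} < \tfrac{1}{10}$ are calibrated together so that the quadratic perturbative terms controlled by $\|\Delta\Delta^T\|_F$ can be absorbed into the leading term $\sqrt{2\lambda_r(X^\star)}\|\Delta\|_F$ of $\|E\|_F$, with enough slack to land the final constant precisely at $\tfrac{1}{2}\lambda_r(X^\star)$ rather than something larger. A secondary subtlety is the Procrustes symmetry $\hat{U}^T\Delta = \Delta^T\hat{U}$, but this is a standard consequence of the first-order conditions for the orthogonal Procrustes problem and requires no new idea.
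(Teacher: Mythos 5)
Part (i) of your proposal is correct and identical to the paper's argument. For part (ii), your algebra up to the displayed identity is also fine (it legitimately uses the paper's standing assumption that the $A_i$ are symmetric, and the Procrustes symmetry $\hat{U}^T\Delta = \Delta^T\hat{U}$ is indeed standard and is what the paper proves via the SVD). The genuine gap is exactly the "calibration" step you deferred: it does not close with the estimates you specify. Your chain gives $\langle \nabla f(U),\Delta\rangle \geq \tfrac{1}{2}\|E\|_F\bigl[(1-\delta_{2r})\|E\|_F - (1+\delta_{2r})\|\Delta\Delta^T\|_F\bigr]$, and with your reverse-triangle bound $\|E\|_F \geq \sqrt{2\lambda_r}\,\|\Delta\|_F - \|\Delta\|_F^2 \geq (\sqrt{2}-\tfrac13)\sqrt{\lambda_r}\,\|\Delta\|_F$ (writing $\lambda_r := \lambda_r(X^\star)$), together with $\|\Delta\Delta^T\|_F \leq \tfrac13\sqrt{\lambda_r}\,\|\Delta\|_F$ and $\delta_{2r}$ close to $\tfrac1{10}$, the most you can conclude is roughly $\tfrac{\sqrt{2}-1/3}{2}\bigl[\tfrac{9}{10}(\sqrt{2}-\tfrac13) - \tfrac{11}{30}\bigr]\lambda_r\|\Delta\|_F^2 \approx 0.33\,\lambda_r\|\Delta\|_F^2$, which falls short of the required $\tfrac12\lambda_r\|\Delta\|_F^2$; after Cauchy--Schwarz you would only obtain a constant of about $3/\lambda_r$, not the $2/\lambda_r(X^\star)$ asserted in \eqref{eq:eb-lrm}. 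The margin is lost at two places: the symmetrization $\langle A_i,\Delta U^T\rangle = \tfrac12\langle A_i, E+\Delta\Delta^T\rangle$ inserts a global factor $\tfrac12$, and the reverse triangle inequality on $\|E\|_F$ throws away the favorable structure of the cross term; together these consume precisely the slack you need.

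The paper avoids this by never lower-bounding $\|E\|_F$: it expands $\langle UU^T - \hat{U}\hat{U}^T, \Delta U^T\rangle = \|\Delta\Delta^T\|_F^2 + 3\langle \hat{U}\Delta^T,\Delta\Delta^T\rangle + 2\|\hat{U}\Delta^T\|_F^2$, bounds the RIP error by the companion upper bound with the same three terms, and arrives at $\langle\nabla f(U),\Delta\rangle \geq \|\hat{U}\Delta^T\|_F\bigl[2(1-\delta_{2r})\|\hat{U}\Delta^T\|_F - 3(1+\delta_{2r})\|\Delta\Delta^T\|_F\bigr]$; the coefficient $2$ on $\|\hat{U}\Delta^T\|_F^2$ is what yields the bracket $(1-3\delta_{2r})\sqrt{\lambda_r}\|\Delta\|_F \geq 0.7\sqrt{\lambda_r}\|\Delta\|_F \geq \tfrac12\sqrt{\lambda_r}\|\Delta\|_F$ and hence the stated constant. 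If you want to keep your route, it can be repaired by replacing the reverse triangle inequality with the exact expansion $\|E\|_F^2 = \|A\|_F^2 + 2\langle A,\Delta\Delta^T\rangle + \|\Delta\Delta^T\|_F^2 \geq 2\|\hat{U}\Delta^T\|_F^2 - \|\Delta\Delta^T\|_F^2$, which on the stated neighborhood gives $\|E\|_F \geq \tfrac{\sqrt{17}}{3}\sqrt{\lambda_r}\|\Delta\|_F$ and restores enough slack (about $0.6\,\lambda_r\|\Delta\|_F^2$) to reach $\tfrac12\lambda_r\|\Delta\|_F^2$. As written, however, your proposal does not prove the proposition with the claimed constant.
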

\begin{proof}
By the global optimality of $\mathcal{U}$, any $U\in\mathcal{U}$ is a second-order critical point of $f$. On the other hand, due to Fact~\ref{fact:stric-sad-lrm}, any $U\notin\mathcal{U}$ cannot be a second-order critical point of $f$ if $\mathcal{A}$ satisfies $(2r,\delta_{2r})$-RIP with $\delta_{2r}<\frac{1}{10}$. Therefore, the result in (i) holds.

We next prove (ii). Let $\hat{U}$ be the projection of $U$ to $\mathcal{U}$ and $\Delta := U - \hat{U}$. Clearly, $\mbox{dist}(U,\mathcal{U}) = \|\Delta\|_F$. By the definition of $\mathcal{U}$, we have $\hat{U} = U^\star \bar{Q}$, where $\bar{Q} = \argmin_{Q\in\mathcal{O}^r} \| U - U^\star Q\|^2$.
Let ${U^\star}^TU = P\Sigma R^T$ be the singular value decomposition of ${U^\star}^TU$, where $\Sigma\in\mathbb{R}^{r\times r}$ is diagonal and $P,R\in\mathcal{O}^r$. Then, one can verify that $\bar{Q} = PR^T$. Hence, it follows that
$$ \Delta^T\hat{U} = (U - U^\star\bar{Q})^TU^\star\bar{Q} = R\Sigma R^T - \hat{U}^T\hat{U} = \hat{U}^T\Delta. $$
Using $\mathcal{A}(X^\star) = \mathcal{A}(\hat{U}\hat{U}^T) = b$ and \eqref{eq:grad-lrm}, one has $\nabla f(U) = \frac{1}{m}\sum_{i=1}^m \langle A_i, UU^T - \hat{U}\hat{U}^T \rangle A_i U$. It then follows from Fact~\ref{fact:rip} that
\begin{equation}
\label{eq:grad-lb-lrm}
\begin{aligned}
\langle \nabla f(U), \Delta\rangle & = \frac{1}{m} \sum_{i=1}^m \langle A_i, UU^T - \hat{U}\hat{U}^T \rangle\langle A_i, \Delta U^T \rangle \\
& \geq \langle UU^T - \hat{U}\hat{U}^T, \Delta U^T \rangle - \delta_{2r}\|UU^T - \hat{U}\hat{U}^T\|_F\|\Delta U^T\|_F.
\end{aligned}
\end{equation}
Using $U = \Delta + \hat{U}$ and $\Delta^T\hat{U} = \hat{U}^T\Delta$, we obtain
$$ 
\begin{aligned}
\langle UU^T - \hat{U}\hat{U}^T, \Delta U^T \rangle & = \langle \Delta \Delta^T + \hat{U}\Delta^T + \Delta\hat{U}^T, \Delta\Delta^T + \Delta\hat{U}^T \rangle \\
& = \langle \Delta \Delta^T, \Delta \Delta^T \rangle + 3 \langle \hat{U}\Delta^T, \Delta \Delta^T \rangle + 2\|\hat{U}\Delta^T\|_F^2 \\
& \geq \|\Delta\Delta^T\|_F^2 - 3\|\hat{U}\Delta^T\|_F\|\Delta\Delta^T\|_F + 2\|\hat{U}\Delta^T\|_F^2. 
\end{aligned}
$$
Also, we have
$$
\begin{aligned}
\|UU^T - \hat{U}\hat{U}^T\|_F\|\Delta U^T\|_F & = \|\Delta \Delta^T + \hat{U}\Delta^T + \Delta\hat{U}^T\|_F\|\Delta\Delta^T + \Delta\hat{U}^T\|_F \\
& \leq \|\Delta\Delta^T\|_F^2 + 3\|\hat{U}\Delta^T\|_F\|\Delta\Delta^T\|_F + 2\|\hat{U}\Delta^T\|_F^2. 
\end{aligned}
$$
Hence, it follows from from~\eqref{eq:grad-lb-lrm} that
\begin{equation}
\label{eq:grad-lb-lrm-2}
\begin{aligned}
\langle \nabla f(U), \Delta \rangle & \geq (1 - \delta_{2r})\|\Delta\Delta^T\|_F^2 - 3(1 + \delta_{2r})\|\hat{U}\Delta^T\|_F\|\Delta\Delta^T\|_F + 2(1 - \delta_{2r})\|\hat{U}\Delta^T\|_F^2  \\
& \geq \|\hat{U}\Delta^T\|_F \left( 2(1 - \delta_{2r})\|\hat{U}\Delta^T\|_F - 3(1 + \delta_{2r})\|\Delta\Delta^T\|_F\right),
\end{aligned}
\end{equation}
where the second inequality uses $\delta_{2r}<1$. Since the smallest singular value of $\hat{U}$ is $\sqrt{\lambda_r(X^\star)}$, it holds that $\|\hat{U}\Delta^T\|_F \geq \sqrt{\lambda_r(X^\star)}\|\Delta\|_F$. This, together with $\|\Delta\|_F\leq \frac{1}{3}\sqrt{\lambda_r(X^\star)}$ and $\delta_{2r}<\frac{1}{10}$, gives
$$ 
\begin{aligned}
2(1 - \delta_{2r})\|\hat{U}\Delta^T\|_F - 3(1 + \delta_{2r})\|\Delta\Delta^T\|_F & \geq 2(1 - \delta_{2r})\sqrt{\lambda_r(X^\star)}\|\Delta\|_F - (1+\delta_{2r})\sqrt{\lambda_r(X^\star)}\|\Delta\|_F \\
& \geq \frac{\sqrt{\lambda_r(X^\star)}}{2}\|\Delta\|_F.
\end{aligned}
$$
Substituting this into~\eqref{eq:grad-lb-lrm-2} and using $\|\hat{U}\Delta^T\|_F \geq \sqrt{\lambda_r(X^\star)}\|\Delta\|_F$, we obtain 
$$\langle \nabla f(U), \Delta \rangle \geq \frac{\lambda_r(X^\star)}{2}\|\Delta\|_F^2, $$
which, together with the Cauchy-Schwarz inequality, implies the required error bound~\eqref{eq:eb-lrm}.
\end{proof}

\subsubsection{Lipschitz Continuity of $\nabla^2 f$}
We next verify the Lipschitz continuity of $\nabla^2 f$.
\begin{prop}
\label{prop:smooth-lrm}
Suppose that $\mathcal{A}$ satisfies $(2r,\delta_{2r})$-RIP with $\delta_{2r}<\frac{1}{10}$. Then, for any $R>0$, $\nabla^2 f$ is Lipschitz continuous on $\mathbb{B}(0;R)= \left\{ U\in\mathbb{R}^{n\times r}: \|U\|_F\leq R\right\}$ with Lipschitz constant $L = 5R$.
\end{prop}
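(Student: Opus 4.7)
The plan is to bound
$$\|\nabla^2 f(U) - \nabla^2 f(V)\| = \sup_{\|Z\|_F = 1}\left|\mbox{vec}(Z)^T\bigl[\nabla^2 f(U) - \nabla^2 f(V)\bigr]\mbox{vec}(Z)\right|$$
using the explicit Hessian formula \eqref{eq:hess-lrm}. After subtracting, the quadratic form differs as
$$\frac{2}{m}\sum_{i=1}^m \bigl[\langle A_i, UZ^T\rangle^2 - \langle A_i, VZ^T\rangle^2\bigr] + \frac{1}{m}\sum_{i=1}^m \langle A_i, UU^T - VV^T\rangle\langle A_i, ZZ^T\rangle.$$

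First, I would apply the difference-of-squares identity $\langle A_i, UZ^T\rangle^2 - \langle A_i, VZ^T\rangle^2 = \langle A_i, (U-V)Z^T\rangle\langle A_i, (U+V)Z^T\rangle$, together with the decomposition $UU^T - VV^T = U(U-V)^T + (U-V)V^T$, to rewrite the difference as a linear combination of sums of the form $\frac{1}{m}\sum_i \langle A_i, P\rangle\langle A_i, Q\rangle$ in which the matrices $P$ and $Q$ each have rank at most $r$ (for instance, $P = (U-V)Z^T$ and $Q = (U+V)Z^T$, or $P = U(U-V)^T$ and $Q = ZZ^T$).

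Next, each such sum is handled by Fact \ref{fact:rip}, which under the $(2r, \delta_{2r})$-RIP hypothesis yields
$$\left|\tfrac{1}{m}\sum_{i=1}^m \langle A_i, P\rangle\langle A_i, Q\rangle\right| \leq |\langle P, Q\rangle| + \delta_{2r}\|P\|_F\|Q\|_F \leq (1+\delta_{2r})\|P\|_F\|Q\|_F.$$
Invoking submultiplicativity $\|XY^T\|_F \leq \|X\|_F\|Y\|_F$, together with $\|U\|_F, \|V\|_F \leq R$ (so that $\|U+V\|_F \leq 2R$) and $\|Z\|_F = 1$, every resulting factor is uniformly controlled by a multiple of $R\|U - V\|_F$. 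Summing the three pieces and using $\delta_{2r} < \tfrac{1}{10}$ then produces the claimed Lipschitz estimate.

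The main obstacle is that Fact \ref{fact:rip} requires both arguments to have rank at most $r$, whereas $UU^T - VV^T$ has rank up to $2r$ in general, and the naive form of the quadratic piece couples $U, V, Z$ in a way that is not directly amenable to the available RIP. Thus the proof hinges on the rank-$\leq r$ splittings above, which allow every scalar product to be estimated by the $(2r,\delta_{2r})$-RIP alone; everything else is careful but routine bookkeeping of constants.
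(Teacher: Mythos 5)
Your proposal follows essentially the same route as the paper's proof: write the Hessian difference as a quadratic form via \eqref{eq:hess-lrm}, factor the squared terms as $\langle A_i,(U-V)Z^T\rangle\langle A_i,(U+V)Z^T\rangle$, split $UU^T-VV^T$ into rank-$\le r$ pieces, and invoke Fact~\ref{fact:rip} together with Frobenius submultiplicativity and $\|U\|_F,\|V\|_F\le R$ (your explicit rank-$\le r$ splitting is in fact slightly more careful than the paper, which applies the bound to $UU^T-VV^T$ directly). The only caveat concerns the constant: keeping the factor $2$ on the first term, your bookkeeping yields $L = 6(1+\delta_{2r})R \le 6.6R$ rather than exactly $5R$ --- the paper's own chain arrives at $5R$ only because that factor of $2$ is silently dropped --- which is harmless, since only the fact that the Lipschitz constant is of order $R$ is used elsewhere.
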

\begin{proof}
Let $U,U^\prime\in\mathbb{B}(0;R)$. Hence, $\|U\|_F\leq R$ and $\|U^\prime\|_F \leq R$. 
By~\eqref{eq:hess-lrm} and Fact~\ref{fact:rip}, we obtain
\begin{equation*}
\begin{aligned}
& \|\nabla^2 f(U) - \nabla^2 f(U^\prime)\| = \max_{\|Z\|_F=1} \left| \mbox{vec}(Z)^T \left( \nabla^2 f(U) - \nabla^2 f(U^\prime) \right) \mbox{vec}(Z) \right| \\
& \qquad = \max_{\|Z\|_F=1} \left| \frac{1}{m} \sum_{i=1}^m 2\langle A_i, UZ^T \rangle^2 - 2\langle A_i, U^\prime Z^T \rangle^2 + \langle A_i, UU^T - U^\prime{U^\prime}^T \rangle \langle A_i, ZZ^T \rangle \right| \\
& \qquad \leq (1 + \delta_{2r}) \cdot \max_{\|Z\|_F=1}\left[ \|(U + U^\prime)Z^T\|_F\|(U - U^\prime)Z^T\|_F + \|UU^T - U^\prime{U^\prime}^T\|_F\|ZZ^T\|_F\right] \\
& \qquad \leq (1 + \delta_{2r}) \cdot\left[ (\|U\|_F + \|U^\prime\|_F)\|(U - U^\prime)\|_F + \|(U - U^\prime)U^T + U^\prime (U - U^\prime)^T\|_F\right] \\
& \qquad \leq (1 + \delta_{2r}) \cdot\left[ (\|U\|_F + \|U^\prime\|_F)\|(U - U^\prime)\|_F + \|U - U^\prime\|_F\|U\|_F + \|U^\prime\|_F \|U - U^\prime\|_F\right] \\
& \qquad \leq 4(1+\delta_{2r})R \cdot \|U - U^\prime\|_F \leq 5R\cdot\|U - U^\prime\|_F,
\end{aligned}
\end{equation*}
where we use $\delta_{2r}<\frac{1}{10}$ in the last inequality. The proof is then completed.
\end{proof}

\subsection{Proof of Theorem~\ref{thm:CR-LRM}}
In view of Fact~\ref{fact:CR_LC}, it suffices to prove that under the RIP assumption in Theorem \ref{thm:CR-LRM}, the following statements hold: (i) Assumption~\ref{ass:hess-lip} holds, (ii) Assumption~\ref{ass:eb} holds, (iii) $\mathcal{L}(f(U^k))$ is bounded for some $k\geq 0$, and (iv) $\mathcal{U}$ equals the set of second-order critical points of $f$.

Suppose that $U\in\mathbb{R}^{n\times r}$ is the initial point of Algorithm~\ref{alg:CR}. 
Define
\begin{align*}
\bar{R} & := \left( 10r f(U^0) + \frac{3r}{m}\|b\|^2 \right)^{\frac{1}{4}} >0, \\
\mathcal{F} & := \mathbb{B}(0;2\bar{R}) = \{ U\in\mathbb{R}^{n\times r}: \|U\|_F \leq 2\bar{R}\}.
\end{align*}
Let $U\in\mathbb{R}^{n\times r}$ be an arbitrary point in $\mathcal{L}(f(U^0))$. Let $\lambda_1\geq \cdots \geq \lambda_r \geq 0$ be the eigenvalues of $U^TU$. Then, it holds that $\|UU^T\|_F^2 = \|U^TU\|_F^2= \lambda_1^2 + \cdots + \lambda_r^2$ and $\|U\|_F^2 = \lambda_1 + \cdots + \lambda_r$. Hence, we obtain $\|U\|_F^4 \leq r\|UU^T\|_F^2$. This, together with Definition \ref{defi:rip} and $f(U) \leq f(U^0)$, yields
\begin{equation*}
\begin{aligned}
\|U\|_F^4 \leq r\|UU^T\|_F^2 & \leq r \cdot \frac{1}{(1 - \delta_{2r})m} \|\mathcal{A}(UU^T)\|^2 \\
& \leq \frac{2r}{(1 - \delta_{2r})m}\left( \|\mathcal{A}(UU^T)-b\|^2 + \|b\|^2 \right) \\
& \leq \frac{8r}{1 - \delta_{2r}} f(U^0) + \frac{2r}{(1 - \delta_{2r})m}\|b\|^2 \leq \bar{R}^4,
\end{aligned}
\end{equation*}
where we use $\delta_{2r}<\frac{1}{10}$ in the last inequality. By the definition of $\mathcal{F}$, we have $\mathcal{L}(f(U^0)) \subset \mbox{int}(\mathcal{F})$ and $\mathcal{L}(f(U^0))$ is bounded. In addition, by Proposition~\ref{prop:smooth-lrm}, $\nabla^2f$ is Lipschitz continuous on $\mathcal{F}$ with Lipschitz constant $L = 10\bar{R}$. Hence, statements (i) and (iii) above hold. Note that Proposition~\ref{prop:second-order=global-lrm} implies that statements (ii) and (iv) hold. The proof is then completed.

\section{Numerical Experiments}\label{sec:numerical}
In this section, we apply the CR method to solve nonconvex minimization problems considered in Section \ref{sec:app}. Our primary goal is to validate Theorems~\ref{thm:CR-PR} and~\ref{thm:CR-LRM}, which concern the global convergence of Algorithm~\ref{alg:CR} to the target signals and its local quadratic convergence rate. All experiments are coded in Matlab and run on a Dell desktop with a 3.40-GHz Intel Core i7-3770 processor and 16~GB of RAM. The code to reproduce all the figures and numerical results in this section can be found online: \url{https://github.com/ZiruiZhou/cubicreg_app.git}.

\subsection{Phase Retrieval}
Our setup of the experiments for phase retrieval is as follows. We first generate a complex signal $z^\star\in\CC^n$ from the standard $n$-dimensional complex Gaussian distribution $\CC\mathcal{N}(0,I_n)$, which is considered to be our target signal. Next, we generate the measurement vectors $\{a_j\}_{j=1}^m$ independently and identically from $\CC\mathcal{N}(0,I_n)$ and compute the measurements $\{b_j\}_{j=1}^m$ by assigning $b_j = |a_j^Hz^\star|$ for each $j$. Here $m$ is chosen to be $m = \lceil 3n\log^3(n) \rceil$, which empirically guarantees that the event in Theorem~\ref{thm:CR-PR} holds with overwhelming probability. In addition, the set of target signals is given by $\mathcal{X}^\star = \{ (x^\star \cos \phi - y^\star \sin \phi, \; x^\star \sin \phi + y^\star \cos \phi) : \phi \in [0,2\pi) \}$, where $x^\star$ and $y^\star$ are the real and imaginary parts of $z^\star$, respectively.

We then apply Algorithm~\ref{alg:CR} to solve the resulting optimization problem~\eqref{opt:PR_real}. For the initial point $(x^0,y^0)$, we draw the entries of $x^0$ and $y^0$ independently and identically from the uniform distribution on the interval $[-5,5]$. In the $k$-th iteration of Algorithm~\ref{alg:CR}, we compute the relative error (RE) of the iterate $(x^k,y^k)$, which is defined as
$$\mbox{RE}(k) = \frac{\mbox{dist}((x^k,y^k),\mathcal{X}^\star)}{\|(x^\star,y^\star)\|} .$$
Moreover, we terminate Algorithm~\ref{alg:CR} when $\mbox{RE}(k) < 10^{-8}$. As Theorem~\ref{thm:CR-PR} suggests, with overwhelming probability, $\{\mbox{RE}(k)\}_{k\geq 0}$ converges to $0$ and the local convergence rate is at least quadratic. To validate such result, we present the logarithm of $\{\mbox{RE}(k)\}_{k\geq 0}$ against the number of iterations in Figure~\ref{fig:CR_PR}. Also, the time for reaching a required solution is recorded. It is clear from Figure~\ref{fig:CR_PR} that $\{\mbox{RE}(k)\}_{k\geq 0}$ converges to $0$ and in the final stages of the algorithm, the convergence rate is at least superlinear.

\begin{figure}[t]
\centering
\subfloat[$n = 64$, Time (sec.) = 4.7]{\includegraphics[width = 0.48\linewidth]{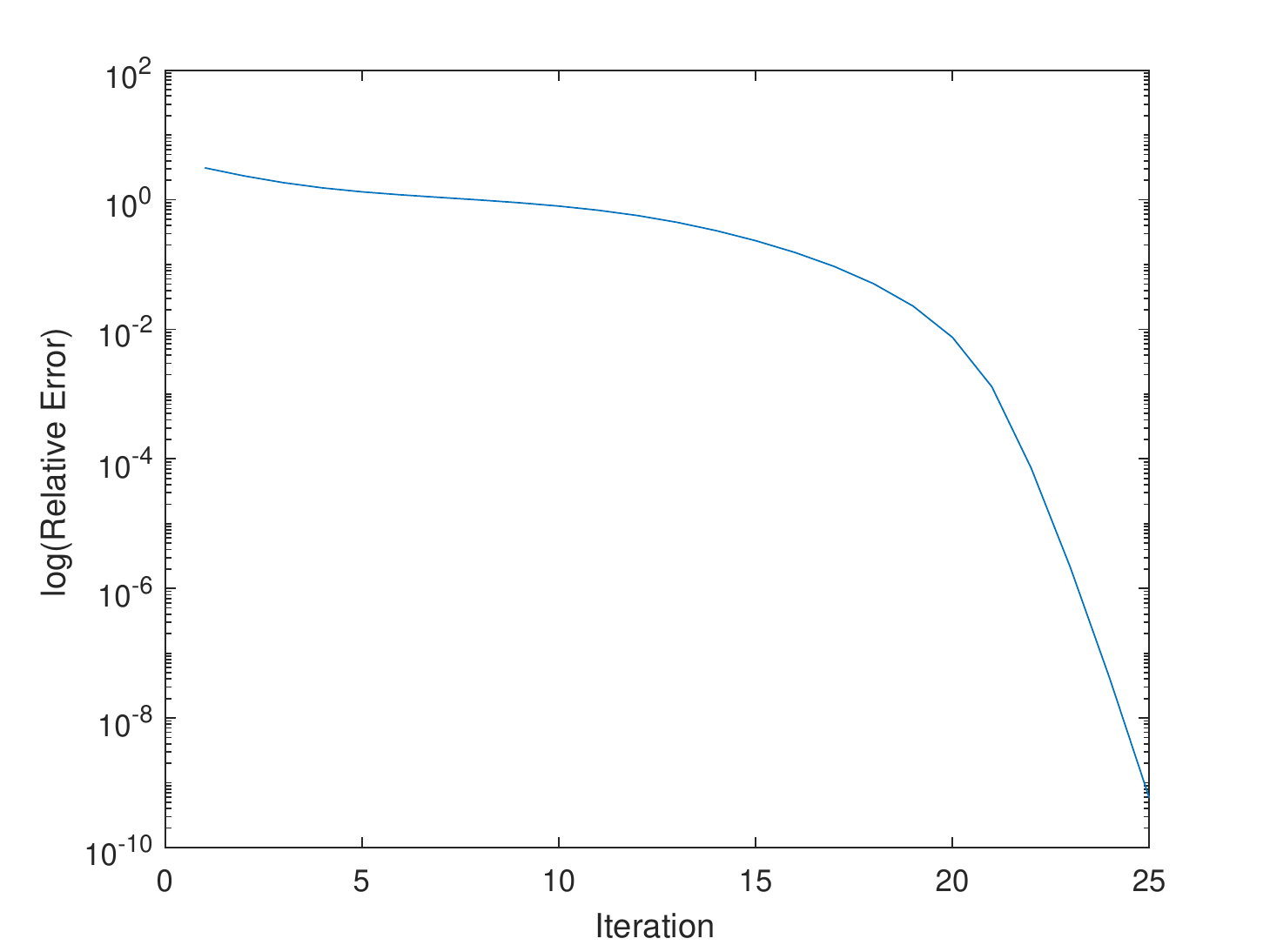}}
\hfill
\subfloat[$n = 128$, Time (sec.) = 12.3]{\includegraphics[width = 0.48\linewidth]{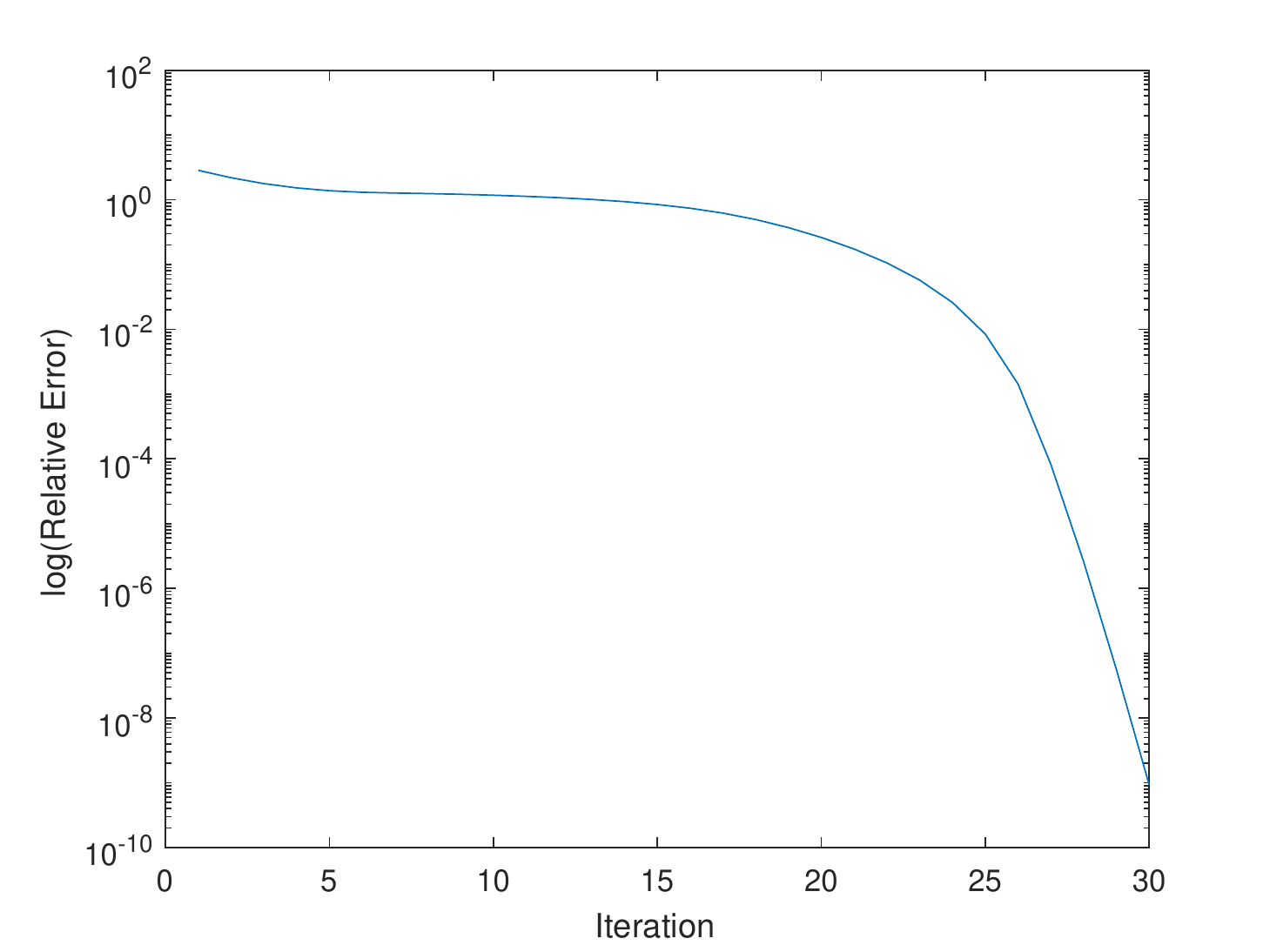}}
\hfill
\subfloat[$n = 256$, Time (sec.) = 129.6]{\includegraphics[width = 0.48\linewidth]{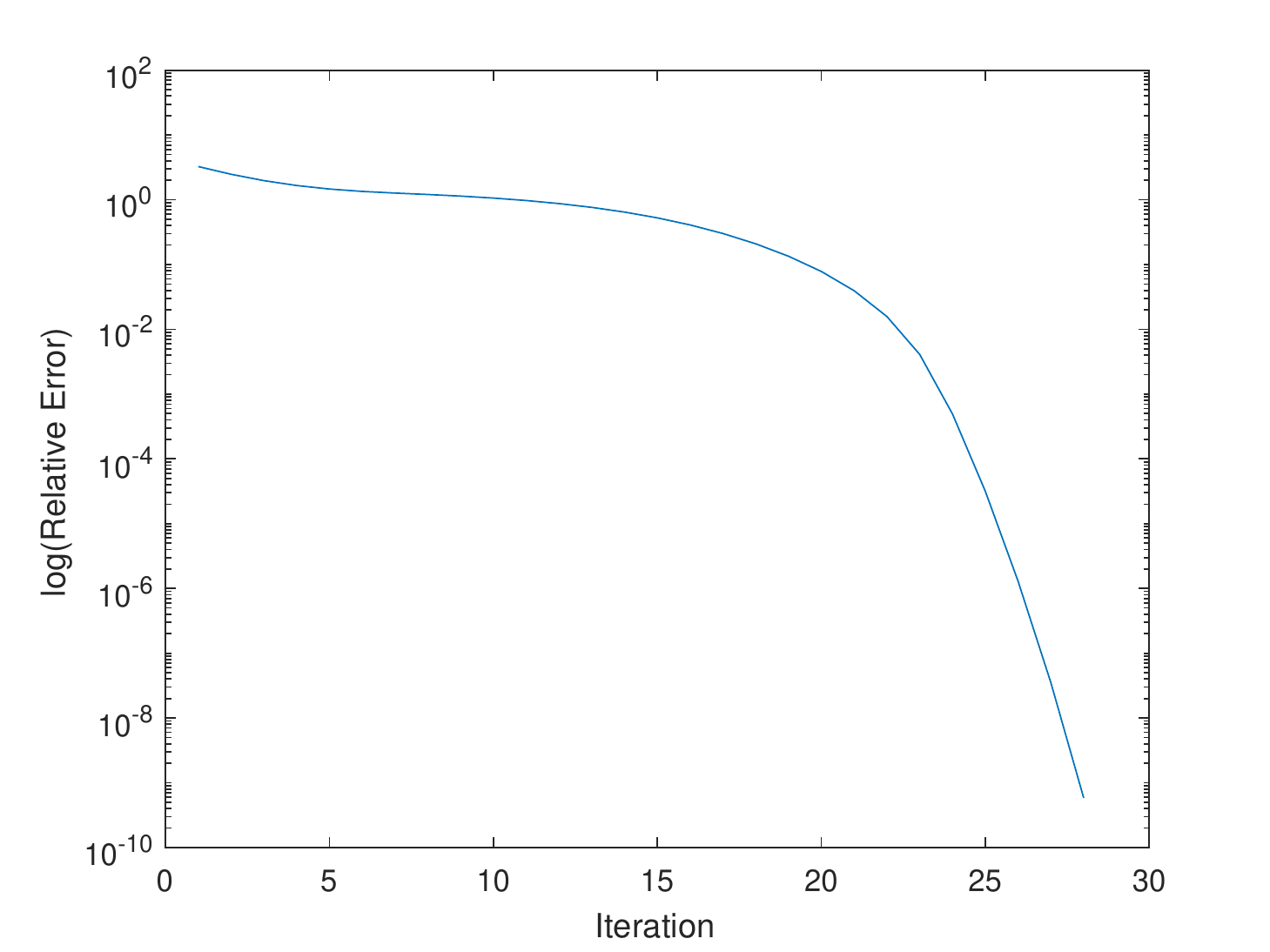}}
\hfill
\subfloat[$n = 512$, Time (sec.) = 577.6]{\includegraphics[width = 0.48\linewidth]{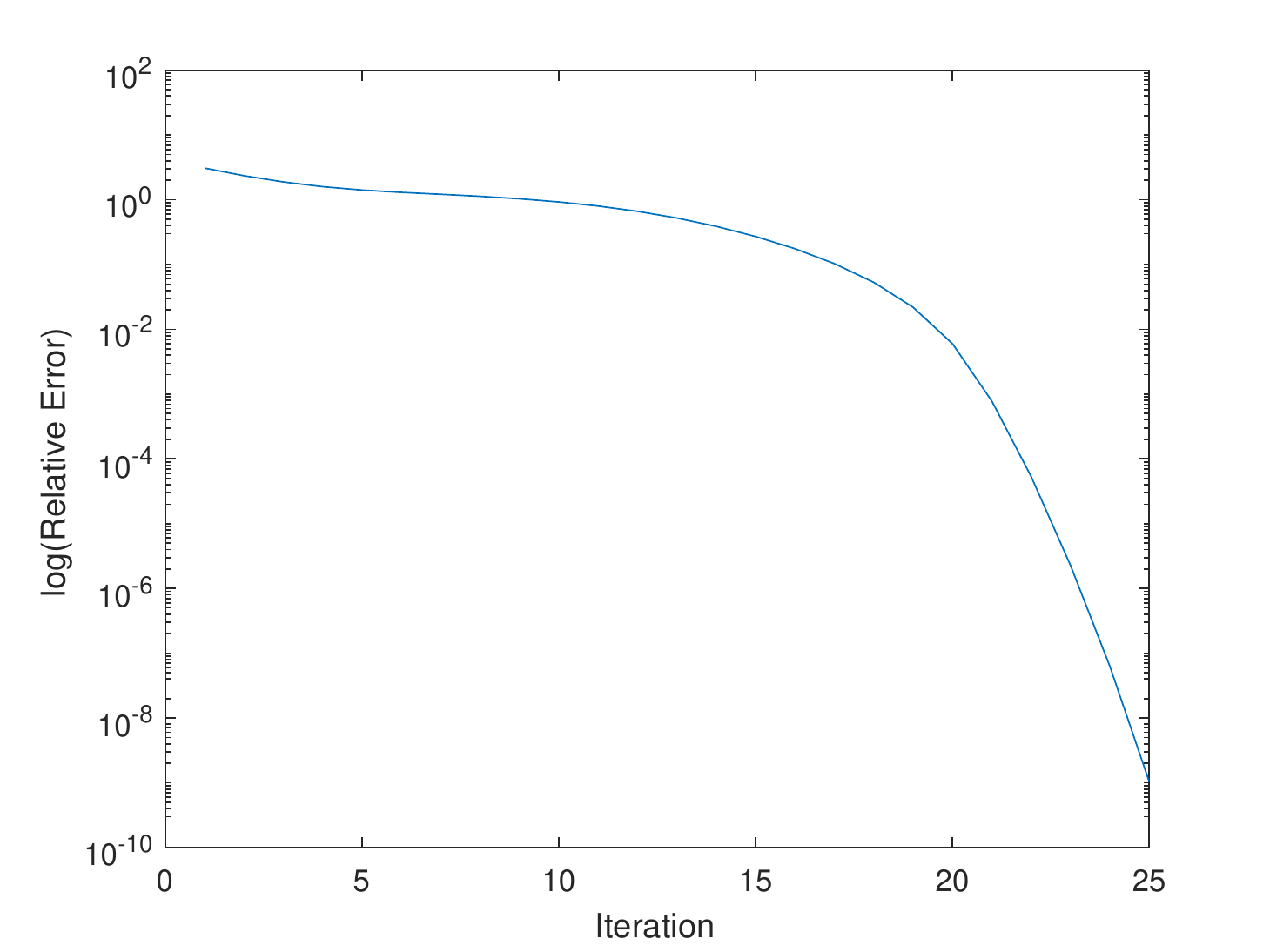}}
\caption{The convergence behaviour of Algorithm~\ref{alg:CR} for solving phase retrieval.}
\label{fig:CR_PR}
\end{figure}

\subsection{Low-Rank Matrix Recovery}
Our setup of the experiments for low-rank matrix recovery is as follows. First, we generate a positive semidefinite matrix $X^\star\in\mathbb{R}^{n\times n}$ with $\mbox{rank}(X^\star) = r$. In particular, we generate a matrix $U^\star\in\mathbb{R}^{n\times r}$ with its entries drawn independently and identically from the standard Gaussian distribution $\mathcal{N}(0,1)$ and set $X^\star = U^\star {U^\star}^T$. Second, we generate the matrices $\{A_j\}_{j=1}^m\subset\mathbb{R}^{n\times n}$ that form the linear operator $\mathcal{A}:\mathbb{R}^{n\times n}\rightarrow\mathbb{R}^m$. For all $i=1,\ldots,m$, entries of $A_i$ are drawn independently and identically from $\mathcal{N}(0,1)$. Here $m$ is chosen to be $m = 3nr$, which empirically guarantees that the event in Theorem~\ref{thm:CR-LRM} holds with overwhelming probability. Finally, we compute the measurements $\{b_j\}_{j=1}^m$ by assigning $b_j = \langle A_j, X^\star\rangle$ for all $j$. In addition, the set of target matrices is given by $\mathcal{U} = \{ U^\star Q: Q\in\mathcal{O}^r\}$.

We then apply Algorithm~\ref{alg:CR} to solve the resulting optimization problem~\eqref{eq:ncvx-lr-mat}. We use a random matrix $U_0\in\mathbb{R}^{n\times r}$, whose entries are drawn independently and identically from the uniform distribution on the interval $[-5,5]$, as the initial point. In the $k$-th iteration of Algorithm~\ref{alg:CR}, we compute the relative error (RE) of the iterate $U^k$, which is defined as 
$$ \mbox{RE}(k) = \frac{\mbox{dist}(U^k,\mathcal{U})}{\|U^\star\|_F}. $$
Moreover, we terminate Algorithm~\ref{alg:CR} when $\mbox{RE}(k) < 10^{-8}$. Note that Theorem~\ref{thm:CR-LRM} implies that, with overwhelming probability, $\{\mbox{RE}(k)\}_{k\geq 0}$ converges to $0$ and the local convergence rate is at least quadratic. To validate such result, we present the logarithm of $\{\mbox{RE}(k)\}_{k\geq 0}$ against the number of iterations in Figure~\ref{fig:CR_LRM}. Also, the time for reaching a required solution is recorded. It is clear from Figure~\ref{fig:CR_LRM} that $\{\mbox{RE}(k)\}_{k\geq 0}$ converges to $0$ and in the final stages of the algorithm, the convergence rate is at least superlinear.

\begin{figure}[t]
	\centering
	\subfloat[$n = 32, r = 6$, Time (sec.) = 9.9]{\includegraphics[width = 0.48\linewidth]{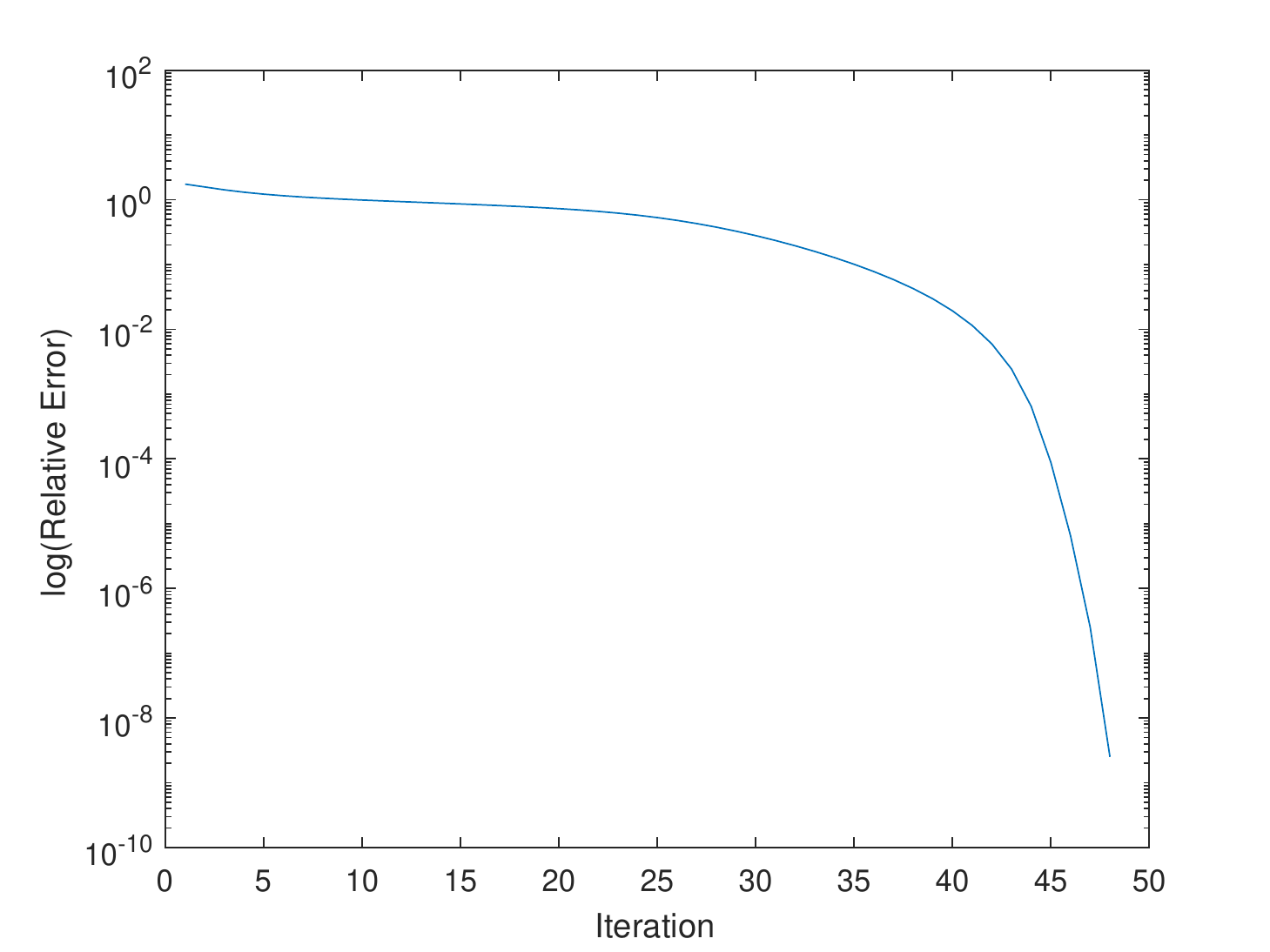}}
	\hfill
	\subfloat[$n = 64, r = 4$, Time (sec.) = 11.9]{\includegraphics[width = 0.48\linewidth]{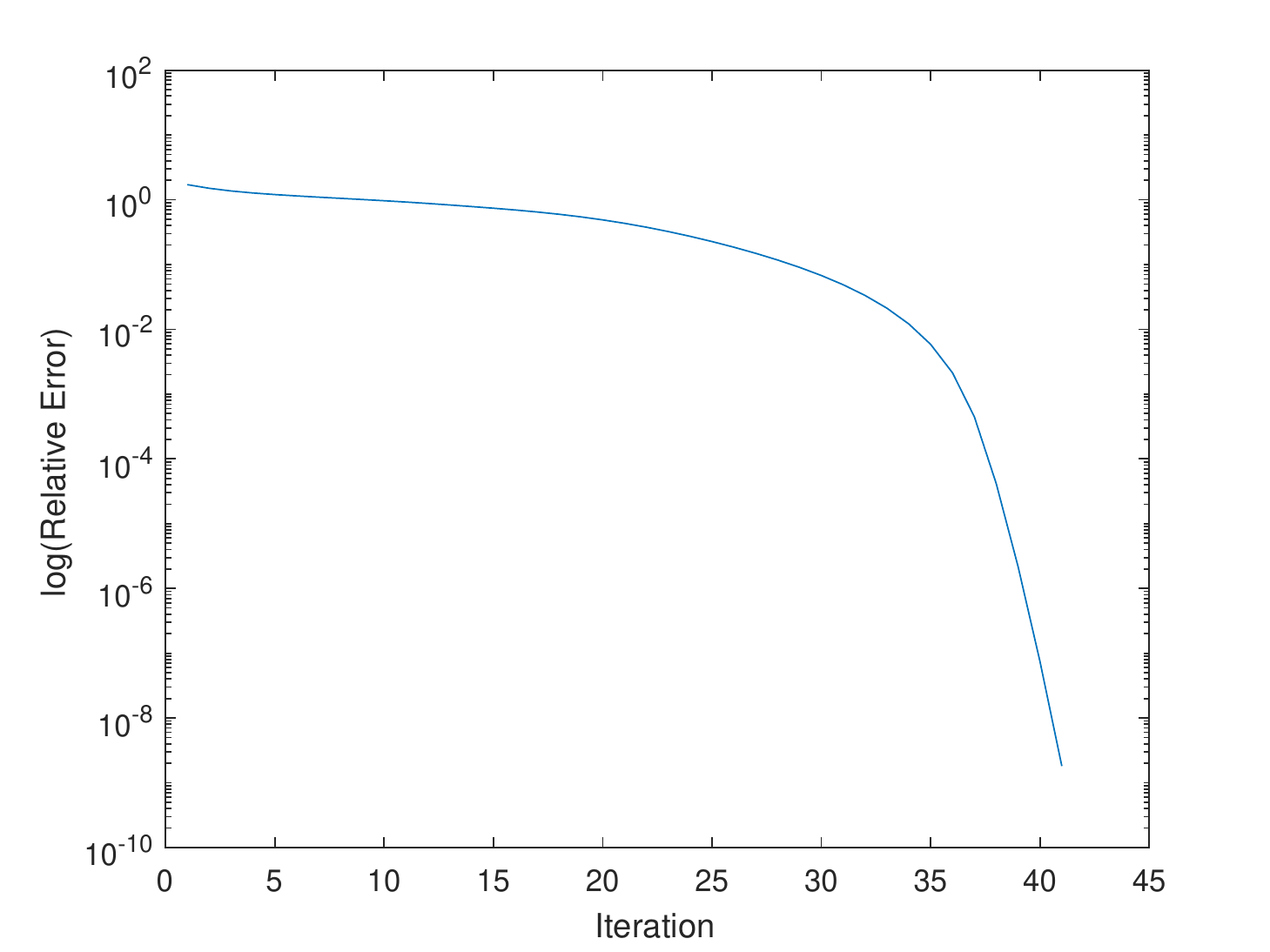}}
	\hfill
	\subfloat[$n = 128, r = 6$, Time (sec.) = 455.0]{\includegraphics[width = 0.48\linewidth]{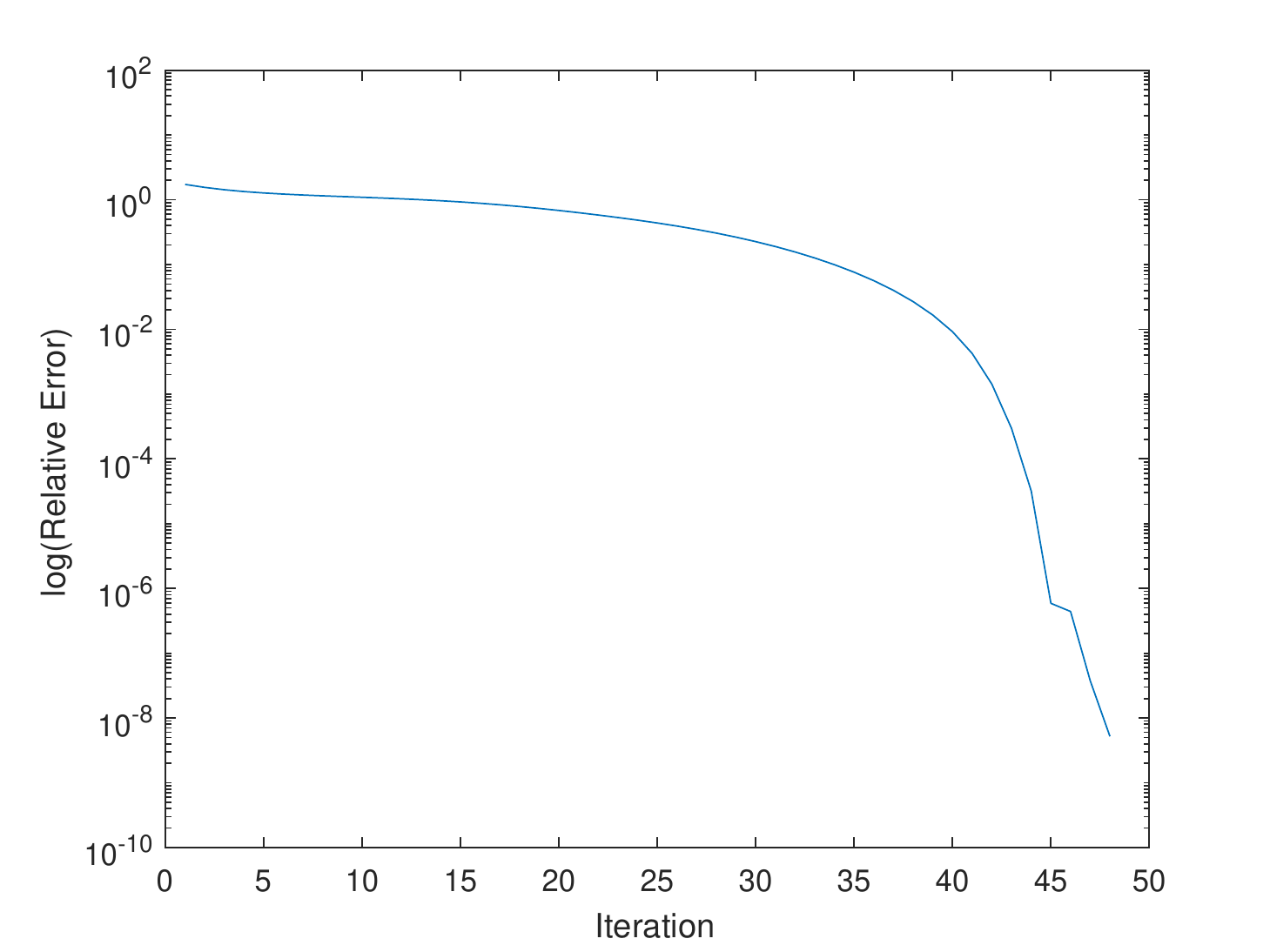}}
	\hfill
	\subfloat[$n = 256, r = 8$, Time (sec.) = 5017.6]{\includegraphics[width = 0.48\linewidth]{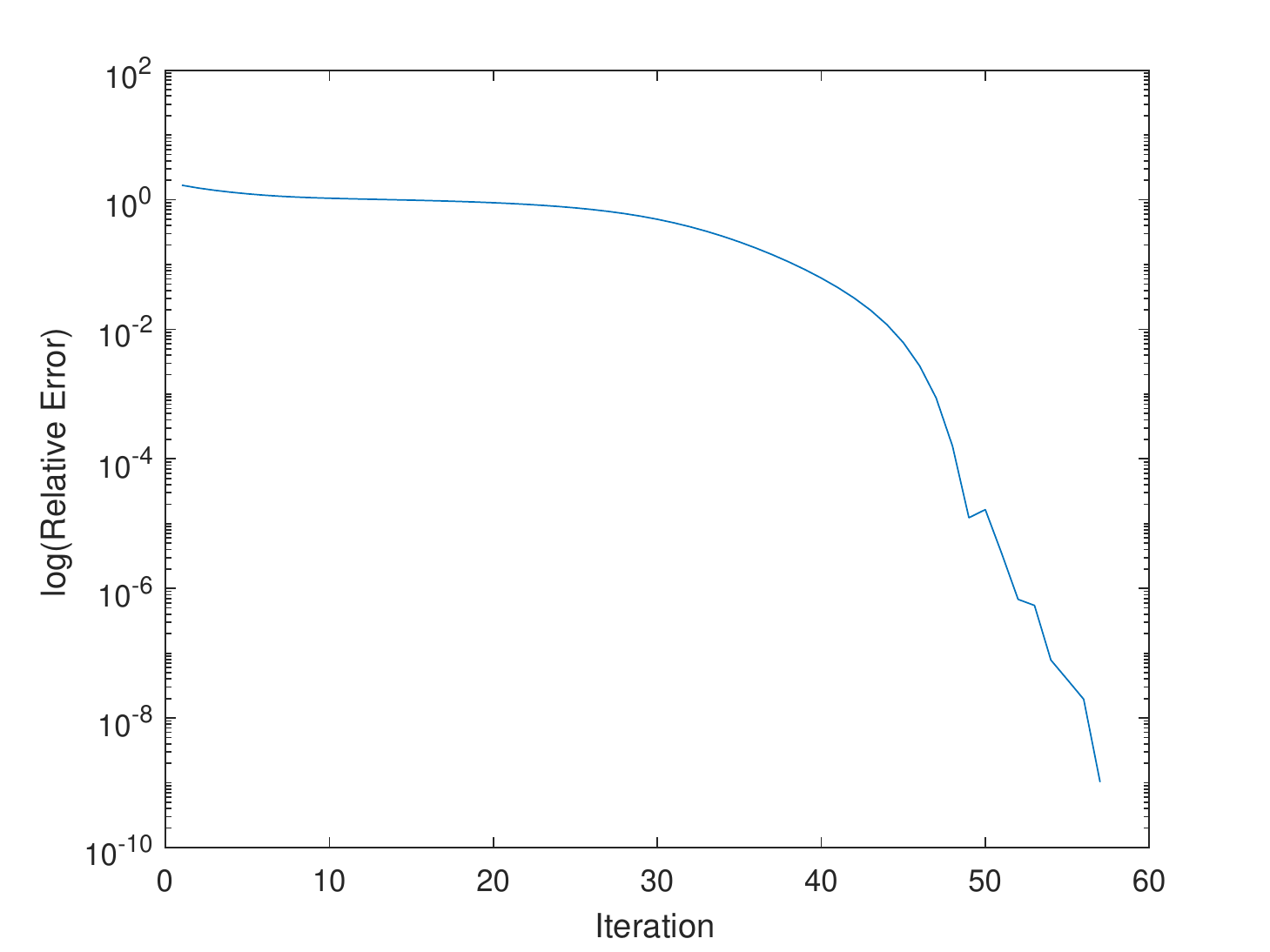}}
	\caption{The convergence behaviour of Algorithm~\ref{alg:CR} for solving low-rank matrix recovery.}
	\label{fig:CR_LRM}
\end{figure}

\section{Conclusions}\label{sec:conclusion}

In this paper we established the quadratic convergence of the CR method under a local EB condition, which is much weaker a requirement than the non-degeneracy condition used in previous works. This indicates that adding a cubic regularization not only equips Newton's method with remarkable global convergence properties but also enables it to converge quadratically even in the presence of degenerate solutions. As a byproduct, we showed that without assuming convexity, the proposed EB condition is equivalent to a quadratic growth condition, which could be of independent interest. In addition, we studied the CR method for solving two concrete nonconvex optimization problems that arise in phase retrieval and low-rank matrix recovery. We proved that with overwhelming probability, the sequence of iterates generated by the CR method for solving these two problems converges at least Q-quadratically to a global minimizer. Numerical results of the CR method for solving these two problems corroborated our theoretical findings.

Our proof of the quadratic convergence of the CR method is not a direct extension of those for other regularized Newton-type methods. The fact that any accumulation point of the sequence generated by the CR method is a second-order critical point plays a key role in our analysis. We believe that similar approaches could be employed for analyzing the local convergence of other iterative algorithms that cluster at second-order critical points, such as trust-region methods.

\bibliographystyle{siamplain}
\bibliography{references}

\begin{thebibliography}{10}

\bibitem{AG08}
{\sc F.~J. Arag{\'o}n~Artacho and M.~H. Geoffroy}, {\em {Characterization of
  Metric Regularity of Subdifferentials}}, J. Convex Anal., 15 (2008),
  pp.~365--380.

\bibitem{BM14}
{\sc S.~Bellavia and B.~Morini}, {\em {Strong Local Convergence Properties of
  Adaptive Regularized Methods for Nonlinear Least Squares}}, IMA J. Numer.
  Anal., 35 (2014), pp.~947--968.

\bibitem{B97}
{\sc R.~Bhatia}, {\em {Matrix Analysis}}, vol.~169 of Graduate Texts in
  Mathematics, Springer--Verlag New York, Inc., New York, 1997.

\bibitem{BNS16}
{\sc S.~Bhojanapalli, B.~Neyshabur, and N.~Srebro}, {\em {Global Optimality of
  Local Search for Low Rank Matrix Recovery}}, in Advances in Neural
  Information Processing Systems (NIPS), 2016, pp.~3873--3881.

\bibitem{C08}
{\sc E.~J. Candes}, {\em {The Restricted Isometry Property and Its Implications
  for Compressed Sensing}}, C. R. Acad. Sci. Paris, 346 (2008), pp.~589--592.

\bibitem{CLS15}
{\sc E.~J. Candes, X.~Li, and M.~Soltanolkotabi}, {\em {Phase Retrieval via
  Wirtinger Flow: Theory and Algorithms}}, IEEE Trans. Inform. Theory, 61
  (2015), pp.~1985--2007.

\bibitem{CP11}
{\sc E.~J. Candes and Y.~Plan}, {\em {Tight Oracle Inequalities for Low-Rank
  Matrix Recovery from a Minimal Number of Noisy Random Measurements}}, IEEE
  Trans. Inform. Theory, 57 (2011), pp.~2342--2359.

\bibitem{CD16}
{\sc Y.~Carmon and J.~C. Duchi}, {\em {Gradient Descent Efficiently Finds the
  Cubic-Regularized Non-Convex Newton Step}}, arXiv preprint arXiv:1612.00547,
  (2016).

\bibitem{CGT10}
{\sc C.~Cartis, N.~I.~M. Gould, and {\relax Ph}.~L. Toint}, {\em {On the
  Complexity of Steepest Descent, Newton's and Regularized Newton's Methods for
  Nonconvex Unconstrained Optimization Problems}}, SIAM J. Optim., 20 (2010),
  pp.~2833--2852.

\bibitem{CGT11a}
{\sc C.~Cartis, N.~I.~M. Gould, and {\relax Ph}.~L. Toint}, {\em {Adaptive
  Cubic Regularisation Methods for Unconstrained Optimization. Part I:
  Motivation, Convergence and Numerical Results}}, Math. Program., 127 (2011),
  pp.~245--295.

\bibitem{CGT11b}
{\sc C.~Cartis, N.~I.~M. Gould, and {\relax Ph}.~L. Toint}, {\em {Adaptive
  Cubic Regularisation Methods for Unconstrained Optimization. Part II:
  Worst-Case Function-and Derivative-Evaluation Complexity}}, Math. Program.,
  130 (2011), pp.~295--319.

\bibitem{DMN14}
{\sc D.~Drusvyatskiy, B.~S. Mordukhovich, and T.~T. Nghia}, {\em {Second-Order
  Growth, Tilt Stability, and Metric Regularity of the Subdifferential}}, J.
  Convex Anal., 21 (2014), pp.~1165--1192.

\bibitem{FY05}
{\sc J.-y. Fan and Y.-x. Yuan}, {\em {On the Quadratic Convergence of the
  Levenberg-Marquardt Method without Nonsingularity Assumption}}, Computing, 74
  (2005), pp.~23--39.

\bibitem{F02}
{\sc A.~Fischer}, {\em {Local Behavior of an Iterative Framework for
  Generalized Equations with Nonisolated Solutions}}, Math. Program., 94
  (2002), pp.~91--124.

\bibitem{G81}
{\sc A.~Griewank}, {\em {The Modification of Newton's method for Unconstrained
  Optimization by Bounding Cubic Terms}}, tech. report, Department of Applied
  Mathematics and Theoretical Physics, University of Cambridge, 1981.

\bibitem{H02}
{\sc P.~Hartman}, {\em Ordinary Differential Equations}, Society for Industrial
  and Applied Mathematics, 2002.

\bibitem{L44}
{\sc K.~Levenberg}, {\em {A Method for the Solution of Certain Non-Linear
  Problems in Least Squares}}, Quart. Appl. Math., 2 (1944), pp.~164--168.

\bibitem{LFQY04}
{\sc D.-H. Li, M.~Fukushima, L.~Qi, and N.~Yamashita}, {\em {Regularized Newton
  Methods for Convex Minimization Problems with Singular Solutions}}, Comput.
  Optim. Appl., 28 (2004), pp.~131--147.

\bibitem{LP17}
{\sc G.~Li and T.~K. Pong}, {\em {Calculus of the Exponent of
  Kurdyka--{\L}ojasiewicz Inequality and Its Applications to Linear Convergence
  of First--Order Methods}}.
\newblock Accepted for publication in \emph{Found. Comput. Math.}, 2017.

\bibitem{L17}
{\sc D.~R. Luke}, {\em {Phase Retrieval, What{'}s New}}, SIAG/OPT Views and
  News, 25 (2017), pp.~1--5.

\bibitem{LT93}
{\sc Z.-Q. Luo and P.~Tseng}, {\em {Error Bounds and Convergence Analysis of
  Feasible Descent Methods: A General Approach}}, Ann. Oper. Res., 46 (1993),
  pp.~157--178.

\bibitem{M63}
{\sc D.~W. Marquardt}, {\em {An Algorithm for Least-Squares Estimation of
  Nonlinear Parameters}}, J. Soc. Ind. Appl. Math., 11 (1963), pp.~431--441.

\bibitem{M06}
{\sc B.~S. Mordukhovich}, {\em {Variational Analysis and Generalized
  Differentiation I: Basic Theory}}, vol.~330, Springer Science \& Business
  Media, 2006.

\bibitem{NP06}
{\sc {\relax Yu}.~Nesterov and B.~T. Polyak}, {\em {Cubic Regularization of
  Newton Method and its Global Performance}}, Math. Program., 108 (2006),
  pp.~177--205.

\bibitem{NW06}
{\sc J.~Nocedal and S.~J. Wright}, {\em {Numerical Optimization}}, Springer,
  2006.

\bibitem{RW98}
{\sc R.~T. Rockafellar and R.~J.-B. Wets}, {\em {Variational Analysis}},
  vol.~317, Springer-Verlag Berlin Heidelberg, 1998.

\bibitem{SECCMS15}
{\sc Y.~Shechtman, Y.~C. Eldar, O.~Cohen, H.~N. Chapman, J.~Miao, and
  M.~Segev}, {\em {Phase Retrieval with Application to Optical Imaging: A
  Contemporary Overview}}, IEEE Sig. Process. Mag., 32 (2015), pp.~87--109.

\bibitem{SQW16}
{\sc J.~Sun, Q.~Qu, and J.~Wright}, {\em {A Geometric Analysis of Phase
  Retrieval}}, Found. Comput. Math.,  (2017),
  \url{https://doi.org/10.1007/s10208-017-9365-9}.

\bibitem{T13}
{\sc {\relax Ph}.~L. Toint}, {\em {Nonlinear Stepsize Control, Trust Regions
  and Regularizations for Unconstrained Optimization}}, Optim. Methods Softw.,
  28 (2013), pp.~82--95.

\bibitem{T00}
{\sc P.~Tseng}, {\em {Error Bounds and Superlinear Convergence Analysis of Some
  Newton-Type Methods in Optimization}}, in Nonlinear Optimization and Related
  Topics, G.~D. Pillo and F.~Giannessi, eds., Springer US, 2000, pp.~445--462.

\bibitem{YF01}
{\sc N.~Yamashita and M.~Fukushima}, {\em {On the Rate of Convergence of the
  Levenberg-Marquardt Method}}, in Topics in Numerical Analysis, G.~Alefeld and
  X.~Chen, eds., vol.~15, Springer, Vienna, 2001, pp.~239--249.

\bibitem{Y15}
{\sc Y.-x. Yuan}, {\em {Recent Advances in Trust Region Algorithms}}, Math.
  Program., 151 (2015), pp.~249--281.

\bibitem{YZS16}
{\sc M.-C. Yue, Z.~Zhou, and A.~M.-C. So}, {\em {A Family of Inexact SQA
  Methods for Non-Smooth Convex Minimization with Provable Convergence
  Guarantees Based on the Luo-Tseng Error Bound Property}}, arXiv preprint
  arXiv:1605.07522,  (2016).

\bibitem{ZL17}
{\sc Y.~Zhou and Y.~Liang}, {\em {Characterization of Gradient Dominance and
  Regularity Conditions for Neural Networks}}, arXiv preprint arXiv:1710.06910,
   (2017).

\bibitem{ZS15}
{\sc Z.~Zhou and A.~M.-C. So}, {\em {A Unified Approach to Error Bounds for
  Structured Convex Optimization Problems}}, Math. Program., 165 (2017),
  pp.~689--728.

\end{thebibliography}
\end{document}